\newtheorem{theorem}{Theorem}[section]
\newtheorem{corollary}[theorem]{Corollary}
\newtheorem{lemma}[theorem]{Lemma}
\theoremstyle{definition}
\newtheorem{assumption}[theorem]{Assumption}
\theoremstyle{remark}
\newtheorem{remark}[theorem]{Remark}
\numberwithin{equation}{section}
\begin{document}

\title{Broken-FEEC discretizations and Hodge Laplace problems}


\author{Martin Campos Pinto and Yaman Güçlü}

\date{%
    Max-Planck-Institut für Plasmaphysik, Boltzmannstr. 2, 85748 Garching, Germany%
    \\[2ex]%
    \today
}
%




\maketitle

\begin{abstract}
  This article studies structure-preserving discretizations of Hilbert complexes with
  nonconforming (broken) spaces that rely on projection operators onto
  an underlying conforming subcomplex.
  This approach follows the conforming/nonconforming Galerkin (CONGA)
  method introduced in 
  \cite{Campos-Pinto.Sonnendrucker.2016.mcomp,Campos-Pinto.Sonnendrucker.2017a.jcm,%
  Campos-Pinto.Sonnendrucker.2017b.jcm} 
  to derive efficient structure-preserving 
  finite element schemes for the time-dependent Maxwell 
  and Maxwell-Vlasov systems by relaxing the curl-conforming constraint
  in finite element exterior calculus (FEEC) spaces.
  Here, it is extended to the discretization of full Hilbert complexes 
  with possibly nontrivial harmonic fields, and  
  the properties of the resulting CONGA Hodge Laplacian operator
  are investigated.
  
  By using block-diagonal mass matrices which may be locally inverted,
  this framework possesses a canonical sequence of dual commuting projection
  operators which are local in standard finite element applications, 
  and it naturally yields local discrete 
  coderivative operators, in contrast to conforming FEEC discretizations.
  The resulting CONGA Hodge Laplacian operator is also local,  
  and its kernel consists of the same discrete harmonic fields 
  as the underlying conforming operator,
  provided that a symmetric stabilization term is added to handle the space nonconformities.
  
  Under the assumption that the underlying conforming subcomplex admits
  a bounded cochain projection, and that the conforming projections are stable
  with moment-preserving properties, a priori convergence results are
  established for both the CONGA Hodge Laplace source and
  eigenvalue problems. Our theory is finally illustrated with a spectral element method,
  and numerical experiments are performed which corroborate our results.
  Applications to spline finite elements on multi-patch mapped domains are described in
  a related article 
  \cite{conga_psydac}, 
  for which the present work provides 
  a theoretical background.
\end{abstract}

\tableofcontents

\section{Introduction}

Over the last few decades, an important body 
of work has been devoted to the development of compatible finite element methods that
preserve the structure of de Rham complexes involved in fluid and
electromagnetic models. In addition to providing faithful approximations of
the Hodge-Helmholtz decompositions at the discrete level, such
discretizations indeed possess intrinsic stability and spectral correctness properties
\cite{Bossavit.1998.ap, Hiptmair.2002.anum, Compatible.2006.IMA, Boffi.2010.anum, buffa2010isogeometric}.
A notable step has been the unifying analysis of finite element exterior calculus (FEEC)
\cite{Arnold.Falk.Winther.2006.anum, Arnold.Falk.Winther.2010.bams}
developped in the general framework of Hilbert complexes with further applications in solid mechanics,
and where the existence of bounded cochain projections, i.e.~sequences of commuting projection operators
with uniform stability properties, has been identified as a key ingredient for discrete stability 
and structure preservation.

More recently, structure-preserving discretizations have been extended to 
nonconforming (broken) finite element spaces associated to sequences of 
conforming subspaces via stable projection operators.
The primary motivation for this was to 
improve the computational efficiency of numerical approximations to
time-dependent Maxwell \cite{Campos-Pinto.Sonnendrucker.2016.mcomp} and Maxwell-Vlasov equations
\cite{Campos-Pinto.Sonnendrucker.2017a.jcm, Campos-Pinto.Sonnendrucker.2017b.jcm},
where conforming FEEC schemes with high order elements, non-cartesian coordinates 
or non-scalar permittivities usually require a global inversion of the mass matrix,
which in turn results in the discrete coderivatives being global operators.
This difficulty is naturally resolved with broken spaces as the mass matrices 
become block diagonal.
In contractible domains where the de Rham sequence is exact, 
the resulting conforming/nonconforming Galerkin (CONGA) method has been shown to have long time stability, 
be spectrally correct and preserve key physical invariants such as the Gauss laws, 
without requiring numerical stabilization mechanisms as commonly used in discontinuous Galerkin schemes.

In this article we extend these works in several directions. 
First, we consider the discretization of full Hilbert complexes with general Hodge cohomology, and we exhibit a canonical mechanism to build stable commuting projections 
for the dual (weak) discrete complex. This shows in particular that broken FEEC discretizations provide a ready-to-use framework for nonconforming Hamiltonian 
particle approximations to Maxwell-Vlasov equations, with either
strong or weak particle-field coupling.
We then study the associated CONGA Hodge Laplacian operator with a stabilization term
for the space nonconformity. For arbitrary positive values of the stabilization parameter,
we find that this operator has the same kernel as its conforming counterpart, namely discrete harmonic fields,
and we establish several decompositions of the broken spaces that generalize the discrete 
Hodge-Helmholtz decompositions of conforming FEEC spaces. 
The associated source problem is next shown to be well-posed.
Under the assumption that the conforming projection operators are uniformly stable 
with moment-preserving properties, we establish a priori error estimates 
that allow us to recover the main stability and convergence properties 
of conforming FEEC approximations.
For stronger penalization regimes, our error estimates 
also show the spectral correctness of the CONGA Hodge Laplacian operator.
Finally we describe an application to polynomial finite elements, where this framework
naturally yields local discrete differential operators for both the primal (strong) and dual (weak) sequences, as well as local $L^2$-stable dual commuting projection operators.

We point out that for Cartesian meshes or low-order elements, there exist lumping methods based on
approximate quadrature rules which allow one to derive local approximations of the inverse mass matrices,
see e.g. \cite{Cohen_Monk_1998_nmpde,Egger_Radu_2021_sinum}, as well as
local dual differential operators \cite{Lee_Winther_2018_mcomp,Lee_2022_m2an}.
While the extension of these methods to high-order elements on unstructured or curvilinear cells is yet unclear,
the CONGA method has no such limitations:
the theory presented in this article naturally extends to curvilinear grids
(see~\cite{conga_psydac} for an application to spline finite elements on multi-patch mapped domains)
as well as unstructured grids (following the same lines as in
\cite{Campos-Pinto.Sonnendrucker.2017a.jcm,Campos-Pinto.Sonnendrucker.2017b.jcm}).

The outline is as follows. After recalling the main ingredients of conforming
FEEC discretizations of closed Hilbert complexes in Section~\ref{sec:feec},
we describe its extension on broken spaces with projection-based differential operators
in Section~\ref{sec:b_feec}, where the CONGA Hodge Laplacian operator is also presented.
The source and eigenvalue problems are then studied in Section~\ref{sec:pbms}, 
where the a priori convergence results are established. We conclude with an 
application to polynomial finite elements in Section~\ref{sec:cartfem}, and 
exhibit numerical results which confirm some of our theoretical findings.

\section{Hilbert complexes and FEEC discretizations}
\label{sec:feec}

Following \cite{Arnold.Falk.Winther.2010.bams} we consider a closed Hilbert complex
$(W, d) = (W^\ell, d^\ell)_{\ell \in \NN}$
involving unbounded, closed operators $d^{\ell} : W^\ell \to W^{\ell+1}$ with dense domains
$V^\ell$ and closed images $d^{\ell}V^\ell \subset \ker d^{\ell+1}$ implying in particular
$d^{\ell+1}d^\ell =0$.
Denoting by $\norm{\cdot}$ the Hilbert norms of the $W$ spaces, and droping the
$\ell$ indices when they are clear from the context,
the domain spaces are equipped with the graph norm
$\norm{v}^2_{V} = \norm{v}^2 + \norm{dv}^2$,
which makes the domain complex
\begin{equation} \label{seq}
V^{\ell-1} ~ \xrightarrow{\hspace*{2pt} \displaystyle d^{\ell-1} \hspace*{2pt}} ~
V^{\ell} ~ \xrightarrow{\hspace*{2pt} \displaystyle d^{\ell} \hspace*{2pt}} ~
V^{\ell+1}
\end{equation}
a bounded Hilbert complex.
By identifying each $W^\ell$ space with its dual,
we obtain a dual complex (denoted with lower indices to reflect its reverse order)
\begin{equation} \label{seqdual}
V^*_{\ell-1} ~ \xleftarrow{\hspace*{2pt} \displaystyle d^*_{\ell} \hspace*{2pt}} ~
V^*_{\ell}   ~ \xleftarrow{\hspace*{2pt} \displaystyle d^*_{\ell+1} \hspace*{2pt}} ~
V^*_{\ell+1}
\end{equation}
where the operators $d^* = d^*_{\ell+1} : W^{\ell+1} \to W^{\ell}$
are the unbounded adjoints of the $d^\ell$'s. We remind that they are characterized by the relations
\begin{equation} \label{d*}
  \sprod{d^*_{\ell+1} w}{v} = \sprod{w}{d^{\ell} v} \qquad \forall w \in V^*_{\ell+1}, ~ v \in V^{\ell}
\end{equation}
on their domains $V^*_{\ell+1} := \{ w \in W^{\ell+1} : \abs{\sprod{w}{d^\ell v}} \le C_w \norm{v}, \forall v \in V^\ell\}$,
see e.g. \cite{Brezis.2011.fa}, where $\sprod{\cdot}{\cdot}$ denotes the Hilbert product in the $W$ spaces, so that \eqref{d*} essentially amounts to an integration by parts with no boundary terms.
As in \cite{Arnold.Falk.Winther.2010.bams} we denote the ranges and kernels of the primal operators by
\begin{equation*}
\frB^\ell := \Ima(d^{\ell-1}) = dV^{\ell-1} \subset V^{\ell}
\qquad \text{ and } \qquad
\frZ^\ell := \ker(d^{\ell}) \subset  V^{\ell}
\end{equation*}
and similarly for the dual operators,
\begin{equation*}
\frB^*_\ell := \Ima(d^*_{\ell+1}) = d^*V^*_{\ell+1} \subset V^*_{\ell}
\qquad \text{ and } \qquad
\frZ^*_\ell := \ker(d^*_{\ell}) \subset  V^*_{\ell}.
\end{equation*}
Since the operators are closed and densely defined with closed ranges we have
\begin{equation*}
\frB^\ell = (\frZ^*_\ell)^{\perpW}
\qquad \text{ and } \qquad
\frB^*_\ell = (\frZ^\ell)^{\perpW}
\end{equation*}
see \cite{Brezis.2011.fa}, where $\perpW$ denotes the orthogonal complement in the proper $W$ space.

\subsection{Hodge Laplacian operator}
\label{sec:HL}

The Hodge Laplacian operator
\begin{equation} \label{L}
    L := dd^* + d^* d
\end{equation}
is a self-adjoint unbounded operator $L^\ell = d^{\ell-1} d^*_\ell + d^*_{\ell+1} d^\ell: W^\ell \to W^\ell$ with domain
\begin{equation}\label{DL}
D(L^\ell) = \{ u \in V^{\ell} \cap V^*_{\ell} : d^\ell u \in V^*_{\ell+1}, d^*_\ell u \in V^{\ell-1} \}.
\end{equation}
Its kernel and image spaces read
\begin{equation*}
  \ker L^\ell = (\frB^\ell)^\perpW \cap \frZ^\ell =: \frH^\ell
  \qquad \text{ and } \qquad
  \Ima L^\ell = (\frH^\ell)^{\perpW}
\end{equation*}
where $\frH^\ell$ is the space of harmonic fields.
If the latter is not trivial, the
source problem
\begin{equation} \label{HL_f}
L^\ell u = f
\end{equation}
is ill-posed, but it can be corrected by projecting a general source $f \in W^\ell$
and constraining the solution. The resulting problem consists of finding
$u \in (\frH^\ell)^{\perpW}$ such that $L^\ell u = f-Q_{\frH}f$,
where $Q_{\frH}$ is the $W$-orthogonal projection on $\frH^\ell$. It may be recast in a mixed form:
\begin{equation} \label{HL_mixed}
  \begin{aligned}
    &\text{ Find }\quad  (\sigma, u, p) \in X := V^{\ell-1} \times V^\ell \times \frH^\ell, \quad \text{ such that }
    \\
  &\left\{ \begin{aligned}
    \sprod{\sigma}{\tau} - \sprod{d\tau}{u} &= 0 \qquad &&\forall \tau \in V^{\ell-1}
    \\
    \sprod{d\sigma}{v} + \sprod{du}{dv} + \sprod{v}{p} &= \sprod{f}{v} \qquad &&\forall v \in V^{\ell}
    \\
    \sprod{u}{q} &= 0 \qquad &&\forall q \in \frH^{\ell}.
  \end{aligned}\right.
  \end{aligned}
\end{equation}
An equivalent formulation is to find $(\sigma, u, p) \in X$ such that
\begin{equation}\label{HL_b_form}
  b(\sigma, u, p; \tau,v,q) = \sprod{f}{v} \qquad \forall (\tau,v,q) \in X 
\end{equation}
with the bilinear form
\begin{equation} \label{b}
b(\sigma, u, p; \tau,v,q) := \sprod{\sigma}{\tau} - \sprod{d\tau}{u} + \sprod{d\sigma}{v} + \sprod{du}{dv}
    + \sprod{v}{p}
    -  \sprod{u}{q}.
\end{equation}
The well-posedness of this problem essentially relies on the closed complex property which
leads to a generalized Poincaré inequality of the form 
\begin{equation}\label{pcr}
  \norm{v}_V \le \cp \norm{dv}, \qquad v \in V^\ell \cap (\frZ^\ell)^{\perpW}
\end{equation}
using Banach's bounded inverse theorem.
It is indeed shown in \cite[Th.~3.2]{Arnold.Falk.Winther.2010.bams} that for all
$(\sigma, u, p) \in X$, there exists $(\tau, v, q) \in X$ for which
\begin{equation}\label{bstab}
b(\sigma, u, p; \tau,v,q) \ge \gamma (\norm{\sigma}_V + \norm{u}_V+\norm{p})(\norm{\tau}_V + \norm{v}_V+\norm{q})
\end{equation}
holds with a constant $\gamma > 0$ depending only on the Poincaré constant $\cp$.
Noting that $b(\sigma, u, p; \tau,v,q) = b(\tau, -v, q; \sigma,-u,p)$, this
leads to the inf-sup condition
\begin{equation}\label{infsup}
  \inf_{y \in X}\sup_{x\in X} \frac{b(x,y)}{\norm{x}_X\norm{y}_X} \ge \gamma > 0
\end{equation}
where 
  $X = V^{\ell-1} \times V^\ell \times \frH^\ell$
is equipped with 
$  \norm{(\tau,v,q)}_X = \norm{\tau}_V + \norm{v}_V+\norm{q}$.
Classically, the inf-sup condition \eqref{infsup} implies that the operator
$B : X \to X'$ defined by $\sprod{Bx}{y}_{X'\times X} = b(x,y)$ is
surjective~\cite{Boffi.Brezzi.Fortin.2013.scm}, and it is clearly
injective by \eqref{bstab}. Hence \eqref{HL_b_form} admits a unique solution, which satisfies
\begin{equation*}
\norm{\sigma}_V + \norm{u}_V+\norm{p} \le \gamma^{-1}\norm{f}.
\end{equation*}

\subsection{Conforming FEEC discretization}
\label{sec:hc}

The usual discretization is provided by a finite dimensional subcomplex of the form
\begin{equation} \label{seq_hc}
V^{\ell-1,c}_h ~ \xrightarrow{\hspace*{2pt} \displaystyle d^{\ell-1,c}_h \hspace*{2pt}} ~
V^{\ell,c}_h ~ \xrightarrow{\hspace*{2pt} \displaystyle d^{\ell,c}_h \hspace*{2pt}} ~
V^{\ell+1,c}_h
\end{equation}
where the discrete differential operators are the restrictions of the continuous ones,
\begin{equation} \label{dlc}
  d^{\ell,c}_h = d^{\ell} : V^{\ell,c}_h \to V^{\ell+1,c}_h~.
\end{equation}
Here the $c$ superscript indicates that these discrete spaces are {\em conforming} in the sense that $V^{\ell,c}_h \subset V^\ell$.
This notation somehow deviates from the usual one ($V^\ell_h$) 
which we will use for the broken spaces in Section~\ref{sec:b_feec}, since they are the focus of this article.
A key result \cite{Arnold.Falk.Winther.2006.anum,Arnold.Falk.Winther.2010.bams} is that the stability of the
conforming discretization relies on the existence of a {\em bounded cochain projection} $\pi_h$,
i.e. projection operators $\pi^\ell_h: V^\ell \to V^{\ell,c}_h$ that satisfy the commuting diagram property
\begin{equation}\label{pih}
  d^\ell \pi^\ell_h = \pi^{\ell+1}_h d^\ell \qquad \text{ on } ~ V^\ell
\end{equation}
such that $\norm{\pi_h v}_V \le \norm{\pi_h}_V \norm{v}_V$ for all $v \in V^\ell$, 
with an operator norm $\norm{\pi_h}_V$ bounded independent of $h$.
Throughout the article the notation $\norm{\cdot}_V$ will be used for both the $V$ norm and the operator norm in $V$.

\begin{remark}[discretization parameter $h$] \label{rem:h}
  Here and below, the subscript $h$ loosely represents a discretization parameter 
  that can be varied to improve the resolution of the discrete spaces. 
  Typically this parameter corresponds to a mesh size, but
  in most places (and unless specified otherwise)  
  it may represent arbitrary discretization parameters.
  What matters in the analysis is that several properties will hold with constants independent of it.
  Classically these constants will be denoted with the generic letter $C$,
  whose value may change at each occurrence.
\end{remark}

The dual discrete complex involves the same discrete spaces: it reads
\begin{equation} \label{seqdual_hc}
V^{\ell-1,c}_h ~ \xleftarrow{\hspace*{2pt} \displaystyle d^{*,c}_{\ell,h} \hspace*{2pt}} ~
V^{\ell,c}_h   ~ \xleftarrow{\hspace*{2pt} \displaystyle d^{*,c}_{\ell+1,h} \hspace*{2pt}} ~
V^{\ell+1,c}_h
\end{equation}
where
$d^{*,c}_{\ell+1,h} : V^{\ell+1,c}_h \to V^{\ell,c}_h$
is the adjoint of $d^{\ell,c}_h$, i.e. 
\begin{equation} \label{wdlc}
\sprod{d^{*,c}_{\ell+1,h} q}{v} = \sprod{q}{d^{\ell} v}, \qquad \forall v \in V^{\ell,c}_h.
\end{equation}
In similarity to the continuous case (and omitting again the $\ell$ indices when they are clear
from the context), we denote the discrete kernels and ranges by
\begin{equation} \label{BZh_c}
\frB^{\ell,c}_h := \Ima d^{\ell-1,c}_{h} = d^{c}_{h} V^{\ell-1,c}_{h}
\qquad \text{ and } \qquad
\frZ^{\ell,c}_h := \ker d^{\ell,c}_{h}
\end{equation}
and similarly for the discrete adjoint operators,
\begin{equation} \label{BZh*_c}
\frB^{*,c}_{\ell,h} := \Ima d^{*,c}_{\ell+1,h} = d^{*,c}_{h} V^{\ell+1,c}_{h}
\qquad \text{ and } \qquad
\frZ^{*,c}_{\ell,h} := \ker d^{*,c}_{\ell,h}
\end{equation}
where we notice that these four spaces are all subspaces of $V^{\ell,c}_h$.
As the discrete operators are all closed and bounded with closed range, we also have
\begin{equation} \label{B=Z*p_c}
\frB^{\ell,c}_h = V^{\ell,c}_h \cap (\frZ^{*,c}_{\ell,h})^\perpW =: \frZ^{*,c \perp}_{\ell,h}
\end{equation}
and similarly
\begin{equation} \label{B*=Zp_c}
\frB^{*,c}_{\ell,h} = V^{\ell,c}_h \cap (\frZ^{\ell,c}_h)^\perpW =: \frZ^{\ell,c \perp}_h~.
\end{equation}
It follows from the basic sequence property $d^{\ell,c}_h d^{\ell+1,c}_h = 0$ that
$\frB^{\ell,c}_h \subset \frZ^{\ell,c}_h$, in particular $\frB^{\ell,c}_h$ and $\frB^{*,c}_{\ell,h} = \frZ^{\ell,c \perp}_h$
are orthogonal.
Denoting the complement space as
\begin{equation} \label{H_c}
\frH^{\ell,c}_h := (\frB^{\ell,c }_h)^\perpW \cap \frZ^{\ell,c}_h \subset V^{\ell,c}_h,
\end{equation}
yields a discrete Hodge-Helmholtz decomposition for the conforming space,
\begin{equation} \label{HHdec_c}
  V^{\ell,c}_h = \frB^{\ell,c}_h \overset{\perp}{\oplus} \frH^{\ell,c}_h \overset{\perp}{\oplus} \frB^{*,c}_{\ell,h}.
\end{equation}

\subsection{Discrete Hodge Laplacian operator}

The conforming discrete Hodge Laplacian operator is
\begin{equation} \label{Lh_c}
    L^{c}_h := d^{c}_h d^{*,c}_{h} + d^{*,c}_{h} d^{c}_h
\end{equation}
specifically,
$L^{c}_h = L^{\ell,c}_h = d^{\ell-1,c}_h d^{*,c}_{\ell,h} + d^{*,c}_{\ell+1,h} d^{\ell,c}_h : V^{\ell,c}_h \to V^{\ell,c}_h$.
Its kernel consists of the {\em discrete harmonic fields}, namely the $v \in V^\ell_h$ 
for which both $d^{*,c}_{\ell,h} = 0$ and $d^{\ell,c}_h v = 0$.
From \eqref{BZh_c}--\eqref{HHdec_c} one infers that
\begin{equation} \label{kerL_c}
  \ker L^{\ell,c}_h = \frZ^{*,c}_{\ell,h} \cap \frZ^{\ell,c}_h  =  \frH^{\ell,c}_h.
\end{equation}
Using the fact that $d^c_h = d$ on the discrete conforming spaces,
the corresponding source problem in mixed form reads:
\begin{equation} \label{HL_mixed_hc}
  \begin{aligned}
    &\text{ Find }\quad  (\sigma^c_h, u^c_h, p^c_h) \in X^c_h := V^{\ell-1,c}_h \times V^{\ell,c}_h \times \frH^{\ell,c}_h \quad \text{ such that }
    \\
  &\left\{ \begin{aligned}
    \sprod{\sigma^c_h}{\tau} - \sprod{d\tau}{u^c_h} &= 0 \qquad &&\forall \tau \in V^{\ell-1,c}_h
    \\
    \sprod{d\sigma^c_h}{v} + \sprod{du^c_h}{dv} + \sprod{v}{p^c_h} &= \sprod{f}{v} \qquad &&\forall v \in V^{\ell,c}_h
    \\
    \sprod{u^c_h}{q} &= 0 \qquad &&\forall q \in \frH^{\ell,c}_h~.
    \end{aligned}\right.
  \end{aligned}
\end{equation}
The stability of Problem~\eqref{HL_mixed_hc} then relies on a discrete Poincaré inequality 
\begin{equation} \label{pcr_hc} 
  \norm{v}_V \le \cph \norm{d v}, \qquad v \in \frZ^{\ell,c \perp}_h
\end{equation}
see \eqref{B*=Zp_c}, which itself follows from the existence of a bounded cochain projection. 
Indeed the following results holds, see \cite[Th.~3.6 and 3.8]{Arnold.Falk.Winther.2010.bams},
which leads to the well-posedness of Problem~\eqref{HL_mixed_hc} in similarity to the continuous case.
\begin{theorem} \label{th:HL_c}
  If the conforming discrete complex $(V^c_h, d)$ admits a
  $V$-bounded cochain projection \eqref{pih},
  then the discrete Poincaré inequality \eqref{pcr_hc} holds with a constant 
  $\cph = \cp \norm{\pi_h}_V$ where $\cp$ is from \eqref{pcr}.
  Moreover, for any
  $(\sigma, u, p) \in X^c_h = V^{\ell-1,c}_h \times V^{\ell,c}_h \times \frH^{\ell,c}_h$,
  there exists $(\tau,v,q) \in X^c_h$ such that
  \begin{equation*}
    b(\sigma, u, p; \tau,v,q) \ge \gamma (\norm{\sigma}_V + \norm{u}_V+\norm{p})(\norm{\tau}_V + \norm{v}_V+\norm{q})
  \end{equation*}
  holds for some $\gamma > 0$ depending only on the discrete Poincaré constant $\cph$.
  In particular, Problem~\eqref{HL_mixed_hc} is well-posed.
\end{theorem}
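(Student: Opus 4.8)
The plan is to prove the two assertions in turn, in each case transposing the corresponding continuous argument with the continuous Poincaré inequality \eqref{pcr} replaced by its conforming discrete counterpart. I begin with the discrete Poincaré inequality \eqref{pcr_hc}. Fix $v \in \frZ^{\ell,c \perp}_h = \frB^{*,c}_{\ell,h}$. Since $v \in V^{\ell,c}_h$ we have $dv \in \frB^{\ell+1,c}_h \subset \frB^{\ell+1} = dV^\ell$, so $dv$ admits a preimage in $V^\ell$; subtracting its $W$-orthogonal kernel component (which stays in $V^\ell$, as $\frZ^\ell \subset V^\ell$) produces $w \in V^\ell \cap (\frZ^\ell)^{\perpW}$ with $dw = dv$, and \eqref{pcr} then gives $\norm{w}_V \le \cp \norm{dw} = \cp \norm{dv}$. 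The next step is to push $w$ into the conforming space via $\pi^\ell_h$: the commuting property \eqref{pih} together with the fact that $\pi^{\ell+1}_h$ fixes the conforming field $dv \in V^{\ell+1,c}_h$ yields $d\pi^\ell_h w = \pi^{\ell+1}_h dw = \pi^{\ell+1}_h dv = dv$, so $z := \pi^\ell_h w - v \in \frZ^{\ell,c}_h$. Because $v \perp_W \frZ^{\ell,c}_h$ and $dz = 0$, the graph norm splits as $\norm{\pi^\ell_h w}_V^2 = \norm{v}_V^2 + \norm{z}^2 \ge \norm{v}_V^2$, and combining with $\norm{\pi^\ell_h w}_V \le \norm{\pi_h}_V \norm{w}_V$ gives exactly $\norm{v}_V \le \cp \norm{\pi_h}_V \norm{dv}$, i.e. $\cph = \cp \norm{\pi_h}_V$.

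For the second assertion I would, given $(\sigma, u, p) \in X^c_h$, assemble an explicit test triple from elementary probes, each controlling one norm component, exactly as in the proof of \eqref{bstab}. Using the discrete Hodge decomposition \eqref{HHdec_c}, write $u = u_\frB + u_\frH + u_{\frB^*}$ with $u_\frB \in \frB^{\ell,c}_h$, $u_\frH \in \frH^{\ell,c}_h$ and $u_{\frB^*} \in \frB^{*,c}_{\ell,h}$. A short computation using $dd = 0$ and the orthogonalities in \eqref{HHdec_c} shows that the diagonal probe gives $b(\sigma, u, p; \sigma, u, 0) = \norm{\sigma}^2 + \norm{du}^2 + \sprod{p}{u_\frH}$; the probe $(0, d\sigma, 0)$ gives $\norm{d\sigma}^2$ (since $\sprod{p}{d\sigma} = 0$ as $p \perp_W \frB^{\ell,c}_h$); the probe $(0, p, 0)$ gives $\norm{p}^2$ (since $dp = 0$ and $\sprod{d\sigma}{p} = 0$); and the probe $(0, 0, -u_\frH)$ gives $\norm{u_\frH}^2$. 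Moreover $du_{\frB^*} = du$ with $u_{\frB^*} \in \frZ^{\ell,c \perp}_h$, so the discrete Poincaré inequality \eqref{pcr_hc} already bounds $\norm{u_{\frB^*}}_V$ by $\norm{du}$.

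The one remaining and most delicate component is $\norm{u_\frB}$. Since $u_\frB \in \frB^{\ell,c}_h = d V^{\ell-1,c}_h$, the discrete Poincaré inequality at level $\ell-1$ furnishes $\eta \in \frZ^{\ell-1,c \perp}_h$ with $d\eta = u_\frB$ and $\norm{\eta}_V \le \cph \norm{u_\frB}$, and the probe $(-\eta, 0, 0)$ then produces $\norm{u_\frB}^2 - \sprod{\sigma}{\eta}$, the stray term being absorbed by Young's inequality at the cost of a small fraction of $\norm{\sigma}^2$ and of $\norm{u_\frB}^2$. I would then take $(\tau, v, q)$ to be a weighted sum of these probes, with small constants chosen so that all indefinite cross terms are dominated by the positive diagonal contributions, obtaining both $b(\sigma, u, p; \tau, v, q) \ge \gamma' (\norm{\sigma}_V + \norm{u}_V + \norm{p})^2$ and $\norm{(\tau,v,q)}_X \le C (\norm{\sigma}_V + \norm{u}_V + \norm{p})$; the claimed estimate follows with $\gamma = \gamma'/C$. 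The main obstacle is precisely this weighting step: the probes interact through the indefinite couplings $\sprod{\sigma}{\eta}$ and $\sprod{p}{u_\frH}$, and one must pin down the weights so that the diagonal terms strictly dominate and the resulting $\gamma$ depends only on $\cph$. This is the bookkeeping of \cite[Th.~3.2]{Arnold.Falk.Winther.2010.bams}, which transfers verbatim to the discrete setting once \eqref{pcr_hc} is established, since the discrete kernels, ranges and harmonic space satisfy the same abstract relations as their continuous counterparts.
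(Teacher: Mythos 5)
Your proposal is correct and follows essentially the same route as the paper, which does not prove this theorem itself but cites \cite[Th.~3.6 and 3.8]{Arnold.Falk.Winther.2010.bams}: your first argument is exactly the standard commuting-projection proof of the discrete Poincar\'e inequality (transfer a continuous preimage $w \in V^\ell \cap (\frZ^\ell)^{\perpW}$ through $\pi_h$ and use orthogonality to $\frZ^{\ell,c}_h$), and your second correctly reconstructs the Hodge-decomposition test-function argument with all the probe computations checking out. The only cosmetic difference is organizational: you sum separate probes and absorb the indefinite couplings $\sprod{\sigma}{\eta}$ and $\sprod{p}{u_\frH}$ by Young's inequality, whereas the cited proof --- and the paper's own broken-space analog, Lemma~\ref{lem:bstab_h} --- uses the single combined test function $(\tau,v,q)=\bigl(\sigma - \cph^{-2}\rho,\; u + d\sigma + p,\; p - u_\frH\bigr)$, for which the $\sprod{p}{u_\frH}$ coupling cancels exactly rather than being absorbed.
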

We finally remind that Problem~\eqref{HL_mixed_hc} is equivalent to finding
$u^c_h \in V^{\ell,c}_h \cap (\frH^{\ell,c}_h)^\perpW$ such that
$L^{\ell,c}_h u^c_h = Q_{V^c_h} f - Q_{\frH^c_h} f$,
with $\sigma^c_h = d^{*,c}_{\ell,h} u^c_h$ and $p^c_h = Q_{\frH^c_h} f$.
(Throughout the article $Q_U$ denotes the $W$-orthogonal projection onto a 
closed subspace $U$.)
Indeed, each component of the solution in the discrete decomposition
\begin{equation} \label{uc_dec}
  u^c_h = u^c_\frB + u^c_\frH + u^c_{\frB^*}
  ~ \in ~ \frB^{\ell,c}_h \poplus \frH^{\ell,c}_h \poplus \frB^{*,c}_{\ell,h}
\end{equation}
may be characterized by taking test functions in the suitable subspaces: we have
\begin{equation} \label{HL_mixed_hc_B}
  \left\{\begin{aligned}
   \sprod{\sigma^c_h}{\tau} - \sprod{d\tau}{u^c_\frB} &= 0 \qquad &&\forall \, \tau \in V^{\ell-1,c}_h
   \\
   \sprod{d \sigma^c_h}{v} &= \sprod{f}{v}  \qquad &&\forall \, v \in \frB^{\ell,c}_h~,
 \end{aligned} \right.
\end{equation}
the harmonic component is $u^c_\frH = 0$, and
\begin{equation} \label{HL_mixed_hc_B*}
  \sprod{d u^c_{\frB^*}}{d v} = \sprod{f}{v} \qquad \forall \, v \in \frB^{*,c}_{\ell,h}~.
\end{equation}
Taking $v \in \frH^{\ell,c}_{h}$ finally yields
$p^c_h = Q_{\frH^c_h} f$.

\section{Broken FEEC discretization}
\label{sec:b_feec}

We now study a discretization of the Hilbert complex \eqref{seq} where the conformity 
requirement is relaxed. 
Specifically, we assume in this section that we are given a conforming discretization \eqref{seq_hc}
and we consider at each level $\ell$ a discrete space $V^{\ell}_h \subset W^\ell$ that contains 
the conforming one,
\begin{equation} \label{Vlh}
  V^{\ell,c}_h \subset V^{\ell}_h
\end{equation}
but is not necessarily a subspace of $V^\ell$. The typical situation that we have in mind
is the one where the conforming spaces have some finite element structure of the form
$V^{\ell,c}_h = \big( \VV^{\ell}(\Omega_1) \times \dots \times \VV^{\ell}(\Omega_K)\big) \cap V^\ell$
on a partition of the domain $\Omega$ into subdomains $\Omega_k$, $k = 1, \dots K$, 
and where the conformity $V^{\ell,c}_h \subset V^\ell$
amounts to continuity constraints on the subdomain interfaces,
see e.g. \cite{Boffi.Brezzi.Fortin.2013.scm,conga_psydac} and Section~\ref{sec:cartfem}.
Working with the broken spaces
$V^{\ell}_h = \VV^{\ell}(\Omega_1) \times \dots \times \VV^{\ell}(\Omega_K)$
allows one to lift these constraints, yielding more locality for the discrete operators
and more flexibility in the numerical modelling.

Since our analysis is based on a stable conforming discretization, we make this assumption explicit.
\begin{assumption}\label{as:bcp} %
  The conforming sequence $V^c_h$ 
  admits a uniformly $V$-bounded cochain projection \eqref{pih}.
  According to Theorem~\ref{th:HL_c} this implies that the discrete Poincaré
  inequality \eqref{pcr_hc} holds with a constant
  \begin{equation} \label{bcp}
    0 < \cph \le \bcp
  \end{equation}
  where $\bcp$ is independent of the discretization parameter $h$.
\end{assumption}

\subsection{Projection-based differential operators} 
\label{sec:bsp}

A discrete Hilbert complex involving the broken spaces can be
obtained by considering projection operators onto the conforming subspaces,
\begin{equation} \label{Pc}
  P^{\ell}_h : V^{\ell}_h  \to V^{\ell,c}_h \subset V^{\ell}_h
\end{equation}
and by defining discrete differential operators on the broken spaces as
\begin{equation}\label{dh}
d^\ell_h := d^\ell P^{\ell}_h~. 
\end{equation}
These operators map $V^{\ell}_h$ to $V^{\ell+1,c}_h \subset V^{\ell+1}_h$, and by the projection
property they satisfy $d^\ell_h d^{\ell-1}_h = d^\ell d^{\ell-1}P^{\ell-1}_h = 0$, hence
we indeed obtain a discrete Hilbert complex,
\begin{equation} \label{seq_h}
  V^{\ell-1}_h \xrightarrow{ \mbox{$~ d^{\ell-1}_h ~$}}
    V^{\ell}_h \xrightarrow{ \mbox{$~ d^{\ell}_h ~$}}
      V^{\ell+1}_h.
\end{equation}
This construction may be summarized by the following diagram where the horizontal
sequences are Hilbert complexes and the vertical arrows denote projection operators.
\begin{equation*}
  \begin{tikzpicture}[ampersand replacement=\&, baseline] 
  \matrix (m) [matrix of math nodes,row sep=3em,column sep=4em,minimum width=2em] {
          ~~ V^{\ell-1} ~ \bbb
              \& ~~ V^{\ell} ~ \bbb
                  \& ~~ V^{\ell+1} ~ \bbb
    \\
    ~~ V^{\ell-1,c}_h ~ \bbb
        \& ~~ V^{\ell,c}_h ~ \bbb
            \& ~~ V^{\ell+1,c}_h ~ \bbb
    \\
    ~~ V^{\ell-1}_h ~ \bbb
        \& ~~ V^{\ell}_h ~ \bbb
            \& ~~ V^{\ell+1}_h ~ \bbb
    \\
  };
  \path[-stealth]
  (m-1-1) edge node [above] {$d^{\ell-1}$} (m-1-2)
          edge node [right] {$\pi^{\ell-1}_h$} (m-2-1)
  (m-1-2) edge node [above] {$d^{\ell}$} (m-1-3)
          edge node [right] {$\pi^{\ell}_h$} (m-2-2)
  (m-1-3) edge node [right] {$\pi^{\ell+1}_h$} (m-2-3)
  (m-2-1) edge node [above] {$d^{\ell-1}$} (m-2-2)
  (m-2-2) edge node [above] {$d^{\ell}$} (m-2-3)
  (m-3-1) edge node [above] {$d^{\ell-1}P^{\ell-1}_h$} (m-3-2)
          edge node [right] {$P^{\ell-1}_h$} (m-2-1)
  (m-3-2) edge node [above] {$d^\ell P^{\ell}_h$} (m-3-3)
          edge node [right] {$P^{\ell}_h$} (m-2-2)
  (m-3-3) edge node [right] {$P^{\ell+1}_h$} (m-2-3)
  ;
  \end{tikzpicture}
\end{equation*}
We note that this diagram commutes, since the strong differential operators
map onto the conforming spaces.
For the subsequent analysis we equip the broken spaces with Hilbert norms
\begin{equation} \label{normVh}
  \norm{v}_{V_h}^2 = \norm{v}^2 + \norm{dP_hv}^2, \qquad v \in V^\ell_h,
\end{equation}
and assume that the operators $P_h$ are bounded uniformly in $W$, namely that
\begin{equation} \label{Phstab}
  \norm{P_h v} \le C \norm{v} \quad \forall v \in V^\ell_h
\end{equation}
holds with a constant independent of $h$, see Remark~\ref{rem:h}.
Again, for conciseness we sometimes drop the level indices $\ell$ when they are clear from the context.

As in the conforming case, a dual discrete sequence is built on the same spaces
\begin{equation} \label{seqdual_h}
V^{\ell-1}_h ~ \xleftarrow{\hspace*{2pt} \displaystyle d^{*}_{\ell,h} \hspace*{2pt}} ~
V^{\ell}_h   ~ \xleftarrow{\hspace*{2pt} \displaystyle d^{*}_{\ell+1,h} \hspace*{2pt}} ~
V^{\ell+1}_h
\end{equation}
by introducing the discrete adjoint operators
$d^{*}_{\ell+1,h} := (d^\ell_h)^*: V^{\ell+1}_h \to V^{\ell}_h$
defined as
\begin{equation} \label{dualdh}
\sprod{d^{*}_{\ell+1,h} q}{v} = \sprod{q}{d^{\ell}P^\ell_h v}, \qquad \forall v \in V^{\ell}_h.
\end{equation}
The CONGA (broken FEEC) Hodge Laplacian operator is then defined as
\begin{equation} \label{Lh}
    L_{h} := d_h d^*_{h} + d^*_{h} d_h
\end{equation}
and a stabilized version is
\begin{equation} \label{Lha}
  L_{h,\alpha} := L_{h} + \alpha (I-P_h^*)(I-P_h) 
\end{equation}
namely, $L^\ell_{h,\alpha} = d^{\ell-1}_h d^*_{\ell,h} + \alpha (I-(P^\ell_h)^*)(I-P^\ell_h) + d^*_{\ell+1,h} d^\ell_h: V^\ell_h \to V^\ell_h$.
Here the stabilization term involves the adjoint $P_h^* := (P^\ell_h)^* : V^\ell_h \to V^\ell_h$ of the
discrete conforming projection, and a parameter $\alpha \ge 0$. 
In the regimes where $\alpha \to \infty$ as the discretization parameter $h$ is refined,
this term may be seen as a penalization of the nonconformities.
However, an arbitrary positive stabilization is sufficient to recover the conforming 
harmonic fields \eqref{H_c} as the kernel of the broken operator.
\begin{theorem} \label{th:HL_ker}
  The CONGA Hodge Laplacian operator is a symmetric positive semi-definite operator in $V^\ell_h$. 
  For a stabilization parameter $\alpha > 0$, its kernel coincides with that of 
  the conforming operator \eqref{kerL_c}, i.e.
  \begin{equation} \label{kerLh}
  \ker L^\ell_{h,\alpha} = \frH^{\ell,c}_h
  \end{equation}
  and its image is 
  \begin{equation} \label{ImaLh}
  \Ima L^\ell_{h,\alpha} = \big(\frH^{\ell,c}_h\big)^\perph
  \end{equation}
  where the $\perph$ exponent on a discrete space denotes the $W$-orthogonal complement in the natural 
  broken space $V^\ell_h$.  
\end{theorem}
The proof of this result relies on some decompositions of the nonconforming space $V^\ell_h$,
which we will present in Section~\ref{sec:dec_h}. Before doing so, we describe 
projections operators that commute with the dual differentials.

\subsection{Commuting diagrams for strong and weak broken FEEC complexes}

Before turning to the analysis of the Hodge Laplacian operator \eqref{Lh},
we formalize and extend an observation previously made in
\cite{Campos-Pinto.Sonnendrucker.2016.mcomp}, where it was shown that
the adjoint of the conforming projection composed with the (local) $L^2$ projection
onto the broken $H(\curl)$ space commutes with the weak CONGA curl operator.
In the full broken FEEC setting considered here this principle is generalized
to the construction of a canonical sequence of stable projections
that commute with the dual differential operators. 
These dual projections are defined as 
\begin{equation} \label{tpih}
  \tilde \pi^{\ell}_{h} := (P^{\ell}_h)^* Q_{V^{\ell}_h} : W^\ell \to V^\ell_h
\end{equation}
where we remind that $Q_{V^{\ell}_h}$ is the $W$-orthogonal projection onto $V^{\ell}_h$. Namely they are characterized by the relations
\begin{equation} \label{tpih_char}
  \sprod{\tilde \pi^{\ell}_{h}w}{v} = \sprod{w}{P^{\ell}_h v}, \quad \forall w \in W^\ell, ~ v \in V^\ell_h.
\end{equation}
Together with the stable commuting projection operators $\pi^\ell_h$ available 
for the conforming spaces $V^{\ell,c}_h \subset V^\ell_h$, 
this leads to a commuting diagram for both the primal
(strong) and dual (weak) complexes.
\begin{theorem} \label{th:tpih}
  The operators $\tilde \pi^{\ell}_{h}$ are uniformly $W$-stable projections onto the spaces
  \begin{equation} \label{ima_tpih} 
    \Ima \tilde \pi^\ell_h = (P^{\ell}_h)^* V^{\ell}_h 
    = \{ v \in V^\ell_h : \sprod{v}{(I-P^\ell_h)w} = 0, ~ \forall w \in V^\ell_h\}~.
  \end{equation}
  Moreover they commute with the dual differential operators:
  \begin{equation} \label{dual_comm}
    d^*_{\ell,h} \tilde \pi^\ell_h = \tilde \pi^{\ell-1}_h d^*_\ell \qquad \text{ on } ~ V^*_\ell~.
  \end{equation}
  In particular, under Assumption~\ref{as:bcp} %
  we find that 
  both the primal (top) and dual (bottom) diagram below commute. 
\begin{equation*}
  \begin{tikzpicture}[ampersand replacement=\&, baseline] 
  \matrix (m) [matrix of math nodes,row sep=3em,column sep=5em,minimum width=2em] {
        ~~ V^{\ell-1} ~ \bbb
            \& ~~ V^{\ell} ~ \bbb
                \& ~~ V^{\ell+1} ~ \bbb
    \\
    ~~ V^{\ell-1}_h ~ \bbb
        \& ~~~ V^{\ell\phantom{+1}}_h \bbb   
            \& ~~ V^{\ell+1}_h ~ \bbb
    \\
    ~~ V^*_{\ell-1} ~ \bbb
        \& ~~ V^*_{\ell} ~ \bbb
            \& ~~ V^*_{\ell+1} ~ \bbb
    \\
  };
  \path[-stealth]
  (m-1-1) edge node [above] {$d^{\ell-1}$} (m-1-2)
          edge node [right] {$\pi^{\ell-1}_h$} (m-2-1)
  (m-1-2) edge node [above] {$d^{\ell}$} (m-1-3)
          edge node [right] {$\pi^{\ell}_h$} (m-2-2)
  (m-1-3) edge node [right] {$\pi^{\ell+1}_h$} (m-2-3)
  (m-2-1.10) edge node [above] {$d^{\ell-1}_h$} (m-2-2.170)
  (m-2-2.10) edge node [above] {$d^{\ell}_h$} (m-2-3.170)
  (m-2-2.190) edge node [below] {$d^*_{\ell,h}$} (m-2-1.350)
  (m-2-3.190) edge node [below] {$d^*_{\ell+1,h}$} (m-2-2.350)
  (m-3-2) edge node [above] {$d^*_{\ell}$} (m-3-1)
  (m-3-3) edge node [above] {$d^*_{\ell+1}$} (m-3-2)
  (m-3-1) edge node [right] {$\tilde \pi^{\ell-1}_{h}$} (m-2-1)
  (m-3-2) edge node [right] {$\tilde \pi^{\ell}_{h}$} (m-2-2)
  (m-3-3) edge node [right] {$\tilde \pi^{\ell+1}_{h}$} (m-2-3)
  ;
  \end{tikzpicture}
\end{equation*}
\end{theorem}

\begin{remark}
  The projections $\tilde \pi^\ell_h$ are the broken-FEEC analogue of the $W$-orthogonal projection operators which commute with the dual differential operators 
  in the conforming FEEC model.
  A key point is that here the broken nature of the spaces $V^\ell_h$
  naturally leads to dual projection operators that are local
  when applied to standard finite elements spaces
  (see for instance Theorem~\ref{th:loc_bfeec} below),
  in contrast to what happens in the conforming case. 
\end{remark}

\begin{proof}[Proof.~]
  The $W$ stability is easily derived from that of $P^\ell_h$, indeed \eqref{Phstab} allows us to write
  $\sprod{\tilde \pi^{\ell}_{h}w}{v} \le \norm{w}\norm{P^{\ell}_h v} \le C \norm{w}\norm{v}$,
  hence $\norm{\tilde \pi^{\ell}_{h}w} \le C \norm{w}$ with the same constant as in \eqref{Phstab}.
  The projection property is also straightforward: given that $P^\ell_h$ is itself a projection,
  we see that $(\tilde \pi^{\ell}_{h})^2 w \in V^\ell_h$ is characterized by
  $$
  \sprod{(\tilde \pi^{\ell}_{h})^2w}{v} 
  = \sprod{\tilde \pi^{\ell}_{h} w}{P^{\ell}_h v}
  = \sprod{w}{(P^{\ell}_h)^2 v} 
  = \sprod{w}{P^{\ell}_h v} \quad \forall ~ v \in V^\ell_h
  $$
  hence $(\tilde \pi^{\ell}_{h})^2 w = \tilde \pi^{\ell}_{h} w$,
  and \eqref{ima_tpih} follows from the fact that $\Ima \tilde \pi^\ell_h = \Ima (P^\ell_h)^* = (\ker P^\ell_h)^{\perph}$
  (the symbol $\perph$ was introduced in Theorem~\ref{th:HL_ker})
  and $\ker P^\ell_h = \Ima (I-P^\ell_h)$. Finally the commuting property is
  a consequence of the weak definition of the dual differential operators: 
  indeed for $v \in V^*_\ell$ and $\tau \in V^{\ell-1}_h$ it holds 
  \begin{multline*}
    \sprod{d^*_{\ell,h} \tilde \pi^\ell_h v}{\tau} 
    = \sprod{\tilde \pi^\ell_h v}{d^{\ell-1}_h\tau}
    = \sprod{v}{P^{\ell}_h d^{\ell-1}_h\tau}
    = \sprod{v}{d^{\ell-1}_h\tau}
    \\
    = \sprod{v}{d^{\ell-1} P^{\ell-1}_h\tau}
    = \sprod{d^*_{\ell} v}{P^{\ell-1}_h\tau}
    = \sprod{\tilde \pi^{\ell-1}_h d^*_{\ell} v}{\tau}  
  \end{multline*}
  where the third equality uses the fact that $d^{\ell-1}_h$ maps into the conforming space
  $V^{\ell,c}_h$ where $P^{\ell}_h$ is the identity, and the fifth one uses the adjoint
  property \eqref{d*} and the fact that $P^{\ell-1}_h\tau$ is in $V^{\ell-1,c}_h$, hence in 
  $V^{\ell-1}$.
\end{proof}

An important by-product of our analysis is that broken-FEEC Maxwell solvers 
may be used to derive structure-preserving particle schemes, following the GEMPIC 
approach \cite{kraus2016gempic,CPKS_variational}. Indeed the latter applies to general 
commuting de Rham diagrams, with no assumption of conformity.

\begin{corollary}
  By applying the variational discretization method from \cite{CPKS_variational}
  to either the primal broken-FEEC sequence \eqref{seq_h} or the dual one \eqref{seqdual_h},  
  and its associated primal or dual commuting projection operators,
  one obtains a Hamiltonian particle discretization of the Vlasov-Maxwell system
  with broken spaces for the field solver.
\end{corollary}

\subsection{Broken Hodge-Helmholtz decompositions}
\label{sec:dec_h}

As with the conforming operators, we define
\begin{equation} \label{BZh*}
\left\{ \begin{aligned}
    &\frB^{\ell}_h := \Ima d^{\ell-1}_h
    \\
    &\frB^{*}_{\ell,h} := \Ima d^{*}_{\ell+1,h} 
\end{aligned} \right.
\quad \text{ and } \qquad
\left\{ \begin{aligned}
  &\frZ^{\ell}_h := \ker d^\ell_h
  \\
  &\frZ^{*}_{\ell,h} := \ker d^{*}_{\ell,h}~.
\end{aligned} \right.
\end{equation}
These spaces may be related with the conforming ones in several ways.
First, using \eqref{dh} and the fact that $P^\ell_h$ is a projection onto the conforming space $V^{\ell,c}_h$, 
we observe that 
\begin{equation} \label{B=Bc}
  \frB^{\ell}_h = d^{\ell-1} P^{\ell-1}_h V^{\ell-1}_h = d^{\ell-1} V^{\ell-1,c}_h = \frB^{\ell,c}_h
\end{equation}
see \eqref{BZh_c}. Next by using the analysis in \cite{Campos-Pinto.2016.cras} and \eqref{H_c} we obtain
\begin{equation} \label{Zdec}
  \frZ^{\ell}_h = \frZ^{\ell,c}_h \oplus (I-P_h^\ell)V^\ell_h = \frB^{\ell,c}_h \oplus  \frH^{\ell,c}_h \oplus (I-P_h)V^\ell_h
\end{equation}
which also yields
\begin{equation} \label{Zdec2}
  \frZ^{\ell}_h \cap V^{\ell,c}_h = \frZ^{\ell,c}_h~.
\end{equation}
Using the $\perph$ exponent to denote $W$-orthogonal complements in the natural $V^\ell_h$ space, 
as introduced in Theorem~\ref{th:HL_ker},
we then write analogs to \eqref{B*=Zp_c} and \eqref{B=Z*p_c}, namely
\begin{equation} \label{BB*=Zp}
\frB^{\ell}_{h} = (\frZ^*_{\ell,h})^\perph =: \frZ^{* \perp}_{\ell,h}
\qquad \text{ and } \qquad
\frB^*_{\ell,h} = (\frZ^{\ell}_h)^\perph =: \frZ^{\ell,\perp}_h
\end{equation}
and similarly we observe that
\begin{equation} \label{VcpP*}
(I-P_h^*)V^{\ell}_h = \Ima (I-P_h^\ell)^* = (\ker (I-P_h^\ell))^\perph = (V^{\ell,c}_h)^\perph~.
\end{equation}
These relations allow us to study the kernel of the stabilized CONGA Hodge Laplacian operator. 
\begin{proof}[Proof of Theorem~\ref{th:HL_ker}.~]
  The first statement is obvious, since
  $L^\ell_{h,\alpha}$ (and $L^\ell_h$) is a sum of symmetric positive semi-definite operators.
  To show \eqref{kerLh}, we test $L^\ell_{h,\alpha} u = 0$ against $u$: this
  yields 
  $$
  0 = \sprod{L^\ell_{h,\alpha}u}{u} = \norm{d^*_{\ell,h}u}^2 + \alpha\norm{(I-P_h)u}^2 + \norm{d^\ell_h u}^2.
  $$
  For $\alpha > 0$ we thus have
  \begin{equation*}
    \ker L^\ell_{h,\alpha} = 
    \ker d^*_{\ell,h} \cap \ker (I-P_h) \cap \ker d^\ell_h
    = \frZ^*_{\ell,h} \cap V^{\ell,c}_h \cap \frZ^\ell_h
    = \frZ^*_{\ell,h} \cap \frZ^{\ell,c}_h
  %
  \end{equation*}
  where the last equality is \eqref{Zdec2}.
  Using next \eqref{BB*=Zp} and \eqref{B=Bc} gives 
  $\frZ^*_{\ell,h} = (\frB^{\ell,c}_h)^\perph$, hence
  \begin{equation*}
  \ker L^\ell_{h,\alpha}
    = (\frB^{\ell,c}_h)^\perph \cap \frZ^{\ell,c}_h
    = (\frB^{\ell,c}_h)^\perpW \cap \frZ^{\ell,c}_h
    = \frH^{\ell,c}_h
  \end{equation*}
  according to the definition of the discrete harmonic forms \eqref{H_c}.
  Finally \eqref{ImaLh} follows from the usual property $\Ima A = (\ker A)^\perph$
  for a symmetric operator $A:V_h \to V_h$.
\end{proof}

We conclude this section by establishing some generalized Hodge-Helmholtz decompositions 
for the broken spaces.
\begin{lemma} \label{lem:HHdec}
  The broken space $V^\ell_h$ admits one orthogonal decomposition: 
  \begin{equation} \label{HHdec_1}
    V^\ell_h 
    = \frB^{\ell,c}_h \poplus  \frH^{\ell,c}_h \poplus \frB^{*,c}_{\ell,h} \poplus (I-P_h^*)V^{\ell}_h
  \end{equation}
  and several non-orthogonal ones:
  \begin{align}
    \label{HHdec_2}
    &V^\ell_h 
      = \frB^{\ell,c}_h \oplus  \frH^{\ell,c}_h \oplus \frB^{*,c}_{\ell,h} \oplus (I-P_h)V^{\ell}_h
  \\
  \label{HHdec_3}
    &V^\ell_h
    = \frB^{\ell}_h \oplus  \frH^{\ell,c}_h \oplus \frB^{*}_{\ell,h} \oplus (I-P_h)V^\ell_h 
  \\ \noalign{\smallskip} \label{HHdec_4}
    &V^\ell_h 
    = \frB^{\ell}_h \oplus \frH^{\ell,c}_h \oplus \frB^{*}_{\ell,h} \oplus (I-P_h^*)V^{\ell}_h
\end{align}
where we remind that $\frB^{\ell}_h = \frB^{\ell,c}_h$, see \eqref{B=Bc}.
\end{lemma}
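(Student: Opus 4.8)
The plan is to read off each of the four decompositions by combining one ``coarse'' splitting of $V^\ell_h$ with the conforming Hodge--Helmholtz decomposition \eqref{HHdec_c} and the kernel decomposition \eqref{Zdec}, treating \eqref{HHdec_1}--\eqref{HHdec_3} as essentially immediate and reserving the real work for \eqref{HHdec_4}. First I would record three elementary splittings of $V^\ell_h$: the orthogonal one $V^\ell_h = V^{\ell,c}_h \poplus (V^{\ell,c}_h)^\perph$ together with the identification $(V^{\ell,c}_h)^\perph = (I-P_h^*)V^\ell_h$ from \eqref{VcpP*}; the range--kernel splitting $V^\ell_h = V^{\ell,c}_h \oplus (I-P_h)V^\ell_h$ of the (non-orthogonal) projection $P_h$, for which $\Ima P_h = V^{\ell,c}_h$ and $\ker P_h = (I-P_h)V^\ell_h$; and the orthogonal splitting $V^\ell_h = \frZ^\ell_h \poplus \frB^*_{\ell,h}$ coming from \eqref{BB*=Zp}. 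Substituting \eqref{HHdec_c}, namely $V^{\ell,c}_h = \frB^{\ell,c}_h \poplus \frH^{\ell,c}_h \poplus \frB^{*,c}_{\ell,h}$, into the first two yields \eqref{HHdec_1} (orthogonal) and \eqref{HHdec_2} (non-orthogonal), while substituting \eqref{Zdec}, namely $\frZ^\ell_h = \frB^{\ell,c}_h \oplus \frH^{\ell,c}_h \oplus (I-P_h)V^\ell_h$, into the third yields \eqref{HHdec_3}.

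The only genuinely new identity is \eqref{HHdec_4}, where the nonconforming complement $(I-P_h)V^\ell_h$ of \eqref{HHdec_3} is traded for $(I-P_h^*)V^\ell_h$ while $\frB^*_{\ell,h}$ is moved past it. The key auxiliary fact I would establish is the characterization $\frB^*_{\ell,h} = (P^\ell_h)^* \frB^{*,c}_{\ell,h}$: factoring $d^\ell_h = d^\ell P^\ell_h$ and taking adjoints gives $d^*_{\ell+1,h} = (P^\ell_h)^* d^{*,c}_{\ell+1,h} P_{V^{\ell+1,c}_h}$, and since $P_{V^{\ell+1,c}_h}$ maps onto $V^{\ell+1,c}_h$ the range of $d^*_{\ell+1,h}$ is exactly $(P^\ell_h)^*$ applied to $\Ima d^{*,c}_{\ell+1,h} = \frB^{*,c}_{\ell,h}$. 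From this I would read off two consequences: $(P^\ell_h)^*$ is injective on $V^{\ell,c}_h$, since $\sprod{(P^\ell_h)^* y}{c} = \sprod{y}{c}$ for all $c \in V^{\ell,c}_h$ forces $y=0$ (take $c=y$), whence $\dim \frB^*_{\ell,h} = \dim \frB^{*,c}_{\ell,h}$; and $\frB^*_{\ell,h} \cap (V^{\ell,c}_h)^\perph = \{0\}$, by the same computation applied to a $w = (P^\ell_h)^* y$ that is also orthogonal to $V^{\ell,c}_h$.

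With these in hand, directness of \eqref{HHdec_4} follows from the near-orthogonality of its blocks: both $\frB^{\ell,c}_h$ and $\frH^{\ell,c}_h$ lie in $\frZ^\ell_h$ (hence are orthogonal to $\frB^*_{\ell,h} = (\frZ^\ell_h)^\perph$) and in $V^{\ell,c}_h$ (hence orthogonal to $(I-P_h^*)V^\ell_h = (V^{\ell,c}_h)^\perph$), and they are mutually orthogonal by \eqref{HHdec_c}. Testing a relation $b+h+w+z=0$ against $b$ and then against $h$ therefore forces $b=h=0$, after which $w=-z \in \frB^*_{\ell,h}\cap (V^{\ell,c}_h)^\perph = \{0\}$ gives $w=z=0$. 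Spanning then follows either from a dimension count, since the four summands total $\dim \frB^{\ell,c}_h + \dim\frH^{\ell,c}_h + \dim\frB^{*,c}_{\ell,h} + \dim (I-P_h^*)V^\ell_h = \dim V^\ell_h$ by \eqref{HHdec_1} combined with $\dim\frB^*_{\ell,h} = \dim\frB^{*,c}_{\ell,h}$, or directly by projecting any $v$ orthogonally onto $V^{\ell,c}_h$, decomposing the projection as $c_\frB + c_\frH + c_{\frB^*}$ via \eqref{HHdec_c}, and writing $c_{\frB^*} = (P^\ell_h)^* c_{\frB^*} + (I-(P^\ell_h)^*)c_{\frB^*} \in \frB^*_{\ell,h} + (V^{\ell,c}_h)^\perph$.

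I expect the main obstacle to be precisely the non-orthogonality in \eqref{HHdec_4}: because $\frB^*_{\ell,h}$ and $(I-P_h^*)V^\ell_h$ are not orthogonal, one cannot simply append orthogonal complements as in \eqref{HHdec_1}--\eqref{HHdec_3}, and the whole argument hinges on pinning down $\frB^*_{\ell,h}$ as $(P^\ell_h)^*\frB^{*,c}_{\ell,h}$ together with the injectivity of $(P^\ell_h)^*$ on the conforming space, which is what ultimately separates $\frB^*_{\ell,h}$ from the nonconforming complement.
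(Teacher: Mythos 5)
Your proposal is correct and takes essentially the same route as the paper: \eqref{HHdec_1}--\eqref{HHdec_3} come from the same three splittings combined with \eqref{HHdec_c} and \eqref{Zdec}, and for \eqref{HHdec_4} you use the same key identity $d^*_{\ell+1,h}\phi = P_h^{*}\, d^{*,c}_{\ell+1,h}\phi$ for conforming $\phi$ (equivalently $\frB^{*}_{\ell,h} = P_h^{*}\frB^{*,c}_{\ell,h}$) to get the spanning, and you reduce directness to the disjointness $\frB^{*}_{\ell,h}\cap (I-P_h^{*})V^{\ell}_h = \{0\}$ exactly as the paper does. The only (minor) divergence is the one-line proof of that disjointness: the paper computes $\norm{v}^2 = \sprod{v}{(I-P_h)v} = 0$ using $\ker P^{\ell}_h \subset \frZ^{\ell}_h$, whereas you use the injectivity of $P_h^{*}$ on $V^{\ell,c}_h$ via the characterization of $\frB^{*}_{\ell,h}$; both arguments are valid.
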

\begin{proof}[Proof.~]
  The first decomposition follows from 
  writing $V^\ell_h = V^{\ell,c}_h \poplus (V^{\ell,c}_h)^\perph$ and using 
  \eqref{VcpP*} together with the conforming decomposition \eqref{HHdec_c}.
  Similarly we derive \eqref{HHdec_2} from $V^\ell_h = V^{\ell,c}_h \oplus (I-P_h)V^\ell_h$,
  and \eqref{HHdec_3} is easily obtained from \eqref{Zdec} and the second relation in \eqref{BB*=Zp}.
To show the last decomposition \eqref{HHdec_4} we start from \eqref{HHdec_1} and write an arbitrary $u \in V^\ell_h$ as
\begin{equation*}
u = d \rho + r + d^{*,c}_h \phi + (I-P_h^*)w
\end{equation*}
with $\rho \in V^{\ell-1,c}_h$, $r \in \frH^{\ell,c}_h$, $\phi \in V^{\ell+1,c}_h$ and $w \in V^\ell_h$.
Since for all $v \in V^\ell_h$ we have
\begin{equation*}
\sprod{d^{*}_h \phi}{v} = \sprod{\phi}{dP_h v} = \sprod{d^{*,c}_h\phi}{P_h v}
  = \sprod{P^*_h d^{*,c}_h\phi}{v}
\end{equation*}
we infer that $d^{*}_h \phi = P_h^* d^{*,c}_h \phi$, hence
$
u = d \rho + r + d^{*}_h \phi + (I-P_h^*)(w + d^{*,c}_h \phi)
$
which establishes the sum $V^\ell_h = \frB^{\ell,c}_h + \frH^{\ell,c}_h + \frB^{*}_{\ell,h} + (I-P_h^*)V^{\ell}_h$.
To show that this sum is direct, given \eqref{HHdec_1} and \eqref{HHdec_3}
it suffices to show that the last two spaces are disjoint, or equivalently that
$\frZ^{\ell,\perp}_h \cap (I-P_h^*)V^{\ell}_h = \{0\}$, see \eqref{BB*=Zp}.
This is verified by observing that any $v = (I-P_h^*)v \in \frZ^{\ell,\perp}_h$ satisfies
$
\norm{v}^2 = \sprod{(I-P_h^*)v}{v} =\sprod{v}{(I-P_h)v} = 0
$
where we have used the fact that $(I-P_h)V^\ell_h = \ker P_h^\ell \subset \ker d_h^\ell = \frZ^{\ell}_h$.
\end{proof}

\section{Analysis of broken-FEEC Hodge Laplace problems}
\label{sec:pbms}

We now turn to the analysis of broken-FEEC 
approximations to Hodge Laplace source and eigenvalue problems.
In this article we shall focus on the properties of the stabilized 
Hodge Laplacian operator $L^\ell_{h,\alpha_h}$ 
with a positive parameter $\alpha_h$ bounded away from zero.
Throughout this section we make the following assumption, in addition to Assumption~\ref{as:bcp}.
\begin{assumption}\label{as:ualp}
  The stabilization parameter $\alpha_h$ satisfies
  \begin{equation} \label{ualp}
  \alpha_h \ge \ualp > 0
  \end{equation}
  for some constant $\ualp$ independent of the discretization parameter $h$.
\end{assumption}

\subsection{The Hodge Laplace source problem}

In our broken-FEEC framework we approximate 
the source problem \eqref{HL_mixed} using a product space
\begin{equation} \label{Xh}
  X_h := V^{\ell-1}_h \times V^\ell_h \times \frH^{\ell,c}_h
\end{equation}
equipped with a Hilbert norm derived from that of the broken spaces \eqref{normVh},
\begin{equation} \label{normXh}
  \norm{(\tau, v, q)}_{X_h}^2 := \norm{\tau}_{V_h}^2 + \norm{v}_{V_h}^2 + \norm{q}^2.
\end{equation}
We then consider the following mixed problem:
Given $f \in W^\ell$, find $(\sigma_h,u_h,p_h) \in X_h$, such that
\begin{equation} \label{HL_mixed_h}
   \left\{\begin{aligned}
    \sprod{\sigma_h}{\tau} - \sprod{dP_h \tau}{u_h} &= 0 
    \\
    \sprod{dP_h \sigma_h}{v} + \sprod{dP_h u_h}{dP_h v} + \alpha_h\sprod{(I-P_h)u_h}{(I-P_h)v} + \sprod{P_hv}{p_h} &= \sprod{f}{P_h v} 
    \\
    \sprod{P_hu_h}{q} &= 0 
  \end{aligned}\right.
\end{equation}
for all $(\tau, v, q) \in X_h$. Here the filtering of the source by the adjoint 
conforming projection $P_h^*$ corresponds to using the dual commuting projection 
\eqref{tpih} in the source approximation.
This is motivated by the structure-preserving properties of the first CONGA method 
developped for Maxwell equations in \cite{Campos-Pinto.Sonnendrucker.2016.mcomp}, 
and is convenient to avoid introducing source approximation errors in the a priori 
error analysis below. Another option is to replace
\begin{equation} \label{unfilter_f}
  \sprod{f}{P_h v} ~ \rightarrow ~ \sprod{f}{v}
\end{equation}
in \eqref{HL_mixed_h}, which corresponds to a simple $L^2$ projection for the source
and will be useful for the study of the eigenvalue problem.
This is better seen by rewriting the mixed problem in operator form.

\begin{lemma} \label{lem:HL_ops_h}
  Problem~\eqref{HL_mixed_h} amounts to finding $u_h \in 
  (P_h^*\frH^{\ell,c}_h)^\perph = V^\ell_h \cap \big(P_h^*\frH^{\ell,c}_h\big)^\perpW$, such that 
  \begin{equation} \label{HL_ops_h}
    L^\ell_{h,\alpha_h} u_h = f_h 
  \end{equation}
  with $f_h = \tilde \pi^\ell_h (f - Q_{\frH^{c}_h} f)$, 
  or $f_h = Q_{V_h} f - P_h^* Q_{\frH^c_h} f$ 
  in the case of an unfiltered source \eqref{unfilter_f}.
  Here we remind that $\tilde \pi^\ell_h$ is the dual commuting projection \eqref{tpih} 
  and $Q_{V_h}$, resp $Q_{\frH^c_h}$, is the $W$-orthogonal projection 
  onto $V^{\ell}_h$, resp $\frH^{\ell,c}_h$.
  The remaining parts of the solution are then given by
  \begin{equation} \label{ps}
    p_h = Q_{\frH^c_h} f \quad \text{ and } \quad \sigma_h = d^*_{\ell,h} u_h.    
  \end{equation}
\end{lemma}

\begin{proof}[Proof.~]
  We begin by observing that the first equation in \eqref{HL_mixed_h} 
  amounts to $\sigma_h = d^*_{\ell,h} u_h$ thanks to \eqref{dualdh}, 
  and that the last one amounts to the constraint that $u_h$ is in the orthogonal complement of $P_h^*\frH^{\ell,c}_h$.
  Testing the second equation with $v \in \frH^{\ell,c}_h$ then yields $p_h = Q_{\frH^c_h} f$
  (both in the filtered and unfiltered cases).
  Finally the equivalence between \eqref{HL_ops_h} and the second equation from 
  \eqref{HL_mixed_h}, using \eqref{ps}, is easily derived from the definition of the CONGA
  Hodge Laplacian operator \eqref{Lh}--\eqref{Lha}.
\end{proof}

\begin{remark} \label{rem:unfilter_lhs}
  In addition to the simple (unfiltered) $L^2$ projection of the source described in \eqref{unfilter_f},
  one may consider an unfiltered projection of the harmonic terms in the left-hand side, i.e., replace
  \begin{equation} \label{unfilter_lhs}
    \sprod{P_hv}{p_h} ~ \rightarrow ~ \sprod{v}{p_h} \quad \text{ and } \quad \sprod{P_hu_h}{q}  ~ \rightarrow ~ \sprod{u_h}{q}
  \end{equation}
  in \eqref{HL_mixed_h}. This option corresponds to finding $u_h \in (\frH^{\ell,c}_h)^\perph$ such that 
  $L^\ell_{h,\alpha_h} u_h = f_h$ with $f_h = Q_{V_h}(f - Q_{\frH^c_h} f)$, and $p_h$, $\sigma_h$ given again by \eqref{ps}.
  This problem admits a unique solution and it leads to a stable approximation, but under a slightly stronger condition:
  see Remarks~\ref{rem:sol_unfilter_lhs} and \ref{rem:stab_unfilter_lhs}.  
\end{remark}

Before turning to the actual stability analysis, we observe that the existence and uniqueness of a 
solution is easily infered from Lemma~\ref{lem:HL_ops_h}.
\begin{lemma}
  Both Problem~\eqref{HL_mixed_h} and its ``unfiltered'' version \eqref{unfilter_f}
  admit a unique solution.
\end{lemma}
\begin{proof}[Proof.~] We will show that there exists a unique solution to \eqref{HL_ops_h}
  satisfying the proper orthogonality constraint and the result will follow from Lemma~\ref{lem:HL_ops_h}.
  Using the orthogonal projections, \eqref{tpih} and the fact that $P_h$ is the identity on 
  $\frH^{\ell,c}_h \subset V^{\ell,c}_h$, we first verify that both the sources 
  $f_h = \tilde \pi^\ell_h (f - Q_{\frH^{c}_h} f)$ and $f_h = Q_{V_h} f - P_h^* Q_{\frH^c_h} f$ 
  belong to $(\frH^{\ell,c}_h)^\perph$.
  Since $\alpha_h > 0$ by assumption~\ref{as:ualp}, Theorem~\ref{th:HL_ker} applies
  and this shows that there exists $v_h \in V^\ell_h$ such that 
  $L^\ell_{h,\alpha_h} v_h = f_h$. Let then $u_h = v_h - Q_{\frH^c_h}P_h v_h$.
  This function still satisfies $L^\ell_{h,\alpha_h} u_h = f_h$ since $u_h-v_h \in \frH^{\ell,c}_h$
  (again by Theorem~\ref{th:HL_ker}) and it also satisfies
  the orthogonal constraint since for all $q \in \frH^{\ell,c}_h$ it holds
  $
  \sprod{u_h}{P_h^*q} 
  = \sprod{v_h - Q_{\frH^c_h}P_h v_h}{P_h^*q} 
  = \sprod{P_h v_h - Q_{\frH^c_h}P_h v_h}{q} = 0
  $
  where we have used again that $P_h$ is the identity
  on $\frH^{\ell,c}_h$.
  To show the uniqueness, further assume that $L^\ell_{h,\alpha_h} u_h = 0$. 
  This would imply $u_h \in \frH^{\ell,c}_h$, and using the orthogonal constraint 
  $u_h \in \big(P_h^*\frH^{\ell,c}_h\big)^\perpW$ we would have
  $0 = \sprod{u_h}{P_h^*u_h} = \sprod{P_h u_h}{u_h} = \norm{u_h}^2$.
  This shows that there exists a unique solution to \eqref{HL_ops_h} that satisfies the orthogonal 
  constraint and ends the proof.
\end{proof}

\begin{remark} \label{rem:sol_unfilter_lhs}
  One can verify that the ``fully unfiltered'' problem described in Remark~\ref{rem:unfilter_lhs} 
  also admits a unique solution, by a straightforward adaptation of the above arguments (the source $f_h = Q_{V_h}(f - Q_{\frH^c_h} f)$ belongs to $(\frH^{\ell,c}_h)^\perph$, 
  so that $L^\ell_{h,\alpha_h} v_h = f_h$ holds for some $v_h \in V^\ell_h$, and $u_h := v_h - Q_{\frH^c_h}v_h$ 
  is a solution which is orthogonal to the kernel of $L^\ell_{h,\alpha_h}$).
\end{remark}

To study the well-posedness of \eqref{HL_mixed_h} we recast it in the form 
\begin{equation}\label{HLh_b_form}
  b_h(\sigma_h, u_h, p_h; \tau,v,q) = \sprod{f}{P_h v} \qquad  \forall (\tau,v,q) \in X_h
\end{equation}
with a new bilinear form on $X_h$ defined as
\begin{multline} \label{bh}
  b_h(\sigma_h, u_h, p_h; \tau,v,q) := \sprod{\sigma_h}{\tau} - \sprod{dP_h \tau}{u_h} + \sprod{dP_h \sigma_h}{v} + \sprod{dP_h u_h}{dP_h v} \\
      + \alpha_h \sprod{(I-P_h)u_h}{(I-P_h)v}+ \sprod{P_hv}{p_h} - \sprod{P_hu_h}{q}.
\end{multline}
We note that the unfiltered version \eqref{unfilter_f} corresponds to 
\begin{equation}\label{HLh_b_unfiltered}
  b_h(\sigma_h, u_h, p_h; \tau,v,q) = \sprod{f}{v}, \qquad  \forall (\tau,v,q) \in X_h.
\end{equation}
Using \eqref{Phstab} we verify that $b_h$ is continuous on $X_h$, namely
\begin{equation} \label{bcont}
  b_h(x,y) \le C (1+\alpha_h)\norm{x}_{X_h}\norm{y}_{X_h} \qquad \forall x, y \in X_h~.
\end{equation}
Thus, the continuity constant may depend on $h$ through $\alpha_h$. 
However it is easily verified that this dependency disappears if one considers 
functions in the conforming subspace $V^{\ell,c}_h$.
We then have the following result.
\begin{lemma} \label{lem:bstab_h}
  For all $(\sigma, u, p) \in X_h$,
  there exists  $(\tau,v,q) \in X_h$
  such that
  \begin{equation} \label{bstab_h}
    b_h(\sigma, u, p; \tau,v,q) \ge \gamma (\norm{\sigma}_{V_h} + \norm{u}_{V_h}+\norm{p})(\norm{\tau}_{V_h} + \norm{v}_{V_h}+\norm{q})
  \end{equation}
  holds for some $\gamma > 0$ which only depends on $\ualp$ and $\bcp$ from \eqref{ualp} and \eqref{bcp}, moreover $(I-P_h)v = (I-P_h)u$.
\end{lemma}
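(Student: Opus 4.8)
The lemma is an inf-sup / Babuška-Brezzi type stability statement for the broken bilinear form $b_h$. I need to show that for any $(\sigma,u,p)\in X_h$ I can construct a test triple $(\tau,v,q)\in X_h$ realizing the lower bound, with the extra structural condition $(I-P_h)v=(I-P_h)u$. This is the direct analogue of the conforming estimate in Theorem~\ref{th:HL_c}, so I want to reuse that result on the conforming components while handling the nonconforming part $(I-P_h)u$ separately through the penalization term.

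**Setting up the plan.** The plan is to decompose $u$ using the orthogonal Hodge-Helmholtz decomposition \eqref{HHdec_1} of Lemma~\ref{lem:HHdec}, writing $u=P_hu+(I-P_h)u$ where $P_hu\in V^{\ell,c}_h$ lives in the conforming space and $(I-P_h)u\in(I-P_h)V^\ell_h$ is the nonconforming remainder. Note that since $dP_h=d$ on conforming spaces and $dP_h(I-P_h)=0$, the differential parts of $b_h$ only see the conforming data $P_h\sigma$, $P_hu$, and the harmonic $p$ (which already lies in $\frH^{\ell,c}_h$). First I would apply Theorem~\ref{th:HL_c} to the conforming triple $(P_h\sigma,P_hu,p)\in X^c_h$ to obtain conforming test functions $(\tau^c,v^c,q)\in X^c_h$ giving the conforming lower bound. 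Then I would augment the second test function by adding back the nonconforming part, setting $v:=v^c+(I-P_h)u$, which forces the required identity $(I-P_h)v=(I-P_h)u$ since $(I-P_h)v^c=0$; I would take $\tau:=\tau^c$ and keep $q$. The penalization term $\alpha_h\sprod{(I-P_h)u}{(I-P_h)v}=\alpha_h\norm{(I-P_h)u}^2$ then exactly controls the nonconforming energy, using $\alpha_h\ge\ualp$ from \eqref{ualp}.

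**Key steps in order.** After constructing $(\tau,v,q)$, I would expand $b_h(\sigma,u,p;\tau,v,q)$ and check that the cross terms between conforming and nonconforming pieces vanish: terms involving $dP_h$ annihilate $(I-P_h)u$, and the $\sprod{P_hv}{p}$, $\sprod{P_hu}{q}$ terms only involve conforming projections, so adding $(I-P_h)u$ to $v$ leaves them unchanged. The only genuinely new contribution beyond the conforming estimate is the penalization diagonal term. I would thus obtain a bound of the form $b_h\ge \gamma_c(\norm{P_h\sigma}_V+\norm{P_hu}_V+\norm{p})(\cdots)+\ualp\norm{(I-P_h)u}^2$, and then convert the $V$-norms on conforming components plus the nonconforming $L^2$-norm into the full $V_h$-norms. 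Here I would use that $\norm{\sigma}_{V_h}$ and $\norm{u}_{V_h}$ from \eqref{normVh} split as $\norm{v}_{V_h}^2=\norm{P_hv}^2+\norm{(I-P_h)v}^2+\norm{dP_hv}^2$ (using orthogonality of $P_h$ and $I-P_h$ in $W$), together with the uniform bound \eqref{Phstab} and the Poincaré constant bound \eqref{bcp}, to relate the conforming $V$-norms to the broken $V_h$-norms with $h$-independent constants.

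**Expected main obstacle.** The hard part will be the bookkeeping that turns the mixed bound — part a product of sums of $V$-norms (from Theorem~\ref{th:HL_c}) and part a pure square $\norm{(I-P_h)u}^2$ — into the clean product form \eqref{bstab_h} in the full $V_h$-norms, with a single constant $\gamma$ depending only on $\ualp$ and $\bcp$. This requires a careful Young/Cauchy-Schwarz balancing so that the nonconforming square is absorbed against the corresponding $\norm{(I-P_h)v}$ factor appearing in $\norm{v}_{V_h}$, and simultaneously scaling the conforming test functions (possibly renormalizing $(\tau,v,q)$) so that both $\norm{\tau}_{V_h}$ and $\norm{v}_{V_h}$ on the right are controlled despite $v$ now carrying the extra nonconforming mass. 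I would also need to verify that the norm-equivalence constants relating $\norm{P_h\cdot}_V$ to $\norm{\cdot}_{V_h}$ stay uniform in $h$, which is exactly where \eqref{Phstab} and \eqref{bcp} enter; everything else is routine linear algebra on the orthogonal decomposition.
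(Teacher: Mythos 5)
There is a genuine gap, and it concerns the variable $\sigma$. Your test function $\tau:=\tau^c$ is \emph{conforming}, but the only places where $\sigma$ enters $b_h$ are the terms $\sprod{\sigma}{\tau}$ and $\sprod{dP_h\sigma}{v}$: the second term sees only $P_h\sigma$, and for conforming $\tau^c$ the first term satisfies $\sprod{\sigma}{\tau^c}=\sprod{P_h^*\sigma}{\tau^c}$, i.e. it is blind to the component of $\sigma$ that is $W$-orthogonal to $V^{\ell-1,c}_h$. Since $b_h$ contains \emph{no} penalization of the nonconformity of $\sigma$ (only of $u$), nothing in your construction can produce the full norm $\norm{\sigma}_{V_h}\ge\norm{\sigma}$ on the left of \eqref{bstab_h}: taking $u=0$, $p=0$ and $\sigma\in\ker P_h\cap (V^{\ell-1,c}_h)^{\perp_h}$ (a nontrivial intersection in general, and in any case not uniformly controlled) gives $b_h(\sigma,0,0;\tau^c,v,q)=0$ for every conforming $\tau^c$ and every $v,q$, while $\norm{\sigma}_{V_h}>0$. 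This is exactly why the paper's test function is \emph{broken}: it takes $\tau=\sigma-\bcp^{-2}\rho$, so that $\sprod{\sigma}{\tau}$ produces $\norm{\sigma}^2$ outright. More generally, the paper does not invoke Theorem~\ref{th:HL_c} as a black box at all; it redoes the Arnold--Falk--Winther construction ($\tau=\sigma-\bcp^{-2}\rho$, $v=u+dP_h\sigma+p$, $q=p-u_\frH$) directly on broken data, obtaining a sum-of-squares lower bound and then converting it to product form via the bound $\norm{\tau}_{V_h}+\norm{v}_{V_h}+\norm{q}\le C(\norm{\sigma}_{V_h}+\norm{u}_{V_h}+\norm{p})$ — which also resolves the product-versus-squares mismatch you flag as your ``main obstacle'' but do not settle.

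A second, related error: you assert that the conforming/nonconforming cross terms vanish and that $\norm{v}_{V_h}^2=\norm{P_hv}^2+\norm{(I-P_h)v}^2+\norm{dP_hv}^2$ ``using orthogonality of $P_h$ and $I-P_h$ in $W$''. But $P_h$ is not an orthogonal projection in this paper (the $W$-orthogonal complement of $V^{\ell,c}_h$ is $(I-P_h^*)V^\ell_h$, and the decomposition $V^\ell_h=V^{\ell,c}_h\oplus(I-P_h)V^\ell_h$ is explicitly listed among the \emph{non}-orthogonal ones in Lemma~\ref{lem:HHdec}). Hence terms like $\sprod{dP_h\sigma}{(I-P_h)u}$ and $\sprod{d\tau^c}{(I-P_h)u}$ do not vanish; they must be estimated and absorbed into the penalization term $\alpha_h\norm{(I-P_h)u}^2$ by Young's inequality, which is precisely how the paper treats its analogous cross term $\bcp^{-2}\sprod{u_\frB}{(I-P_h)u}$ (and is where the hypothesis $\alpha_h\ge\ualp$ is actually used). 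This part of your argument is patchable; the treatment of $\sigma$ is not, without changing $\tau$ to a broken test function containing $\sigma$ itself.
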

\begin{remark} \label{rem:stab_unfilter_lhs}
  A similar stability result holds for the bilinear form corresponding to an unfiltered projection of the harmonic terms as described in 
  Remark~\ref{rem:unfilter_lhs}, under the stronger condition $\frac 14 < \ualp$.
\end{remark}
\begin{proof}[Proof.~] We extend the proof of \cite[Th.~3.2]{Arnold.Falk.Winther.2010.bams} to the
case of broken spaces.
According to 
\eqref{HHdec_c} or \eqref{HHdec_2},
any $u \in V^\ell_h$ decomposes into
\begin{equation*}
  u = P_h u + (I-P_h)u
  = u_\frB + u_\frH + u_{\frB^*}
  + (I-P_h)u
  ~ \in ~ \big(\frB^{\ell,c}_h \poplus \frH^{\ell,c}_h \poplus \frB^{*,c}_{\ell,h} \big) \oplus (I-P_h)V^\ell_h
\end{equation*}
and with $\rho \in \frZ^{\ell-1,c\perp}_h$ such that $d\rho = u_\frB$,
the discrete Poincaré inequality \eqref{pcr_hc} yields 
\begin{equation}\label{pcr_rho_up}
  \norm{\rho}_{V_h} = \norm{\rho}_V \le \bcp \norm{u_\frB}, \qquad
  \norm{u_{\frB^*}}_{V_h} = \norm{u_{\frB^*}}_V \le \bcp \norm{dP_h u}  
\end{equation}
where we have used \eqref{bcp}, $P_h\rho = \rho$ and $dP_h u_{\frB^*} = du_{\frB^*} = dP_h u$.
We then set
\begin{equation*}
\tau = \sigma - \frac {1}{\bcp^2}\rho \in V^{\ell-1}_h,
\qquad
v = u + dP_h \sigma + p \in V^{\ell}_h,
\qquad
q = p - u_\frH  \in \frH^{\ell,c}_h
\end{equation*}
and we first infer from \eqref{pcr_rho_up} and $(\norm{u_\frB}^2 + \norm{u_\frH}^2)^{\frac 12} \le \norm{P_h u} \le C \norm{u}$,
see \eqref{Phstab}, that
\begin{equation} \label{tvq}
  \norm{\tau}_{V_h} + \norm{v}_{V_h}+\norm{q} 
  \le C (\norm{\sigma}_{V_h} + \norm{u}_{V_h} + \norm{p})
\end{equation}
with a constant independent of $h$. Recalling that the $V_h$ norm \eqref{normVh} involves $dP_h$, we then compute
\begin{equation*}
\begin{aligned}
  b_h(\sigma, u, p; \tau,v,q)
  &= \sprod{\sigma}{\sigma - \frac {1}{\bcp^2}\rho} + \frac {1}{\bcp^2}\sprod{d\rho}{u}
    + \sprod{dP_h \sigma}{dP_h \sigma} + \sprod{dP_h u}{dP_h u}
  \\
      & \mspace{50mu} + \alpha_h \sprod{(I-P_h)u}{(I-P_h)u}+ \sprod{dP_h \sigma + p}{p} + \sprod{P_hu}{u_\frH }
      \\
  &= \norm{\sigma}_{V_h}^2 - \frac{1}{\bcp^2}\sprod{\sigma}{\rho}
  + \frac{1}{\bcp^2} \big(\norm{u_\frB}^2 + \sprod{u_\frB}{(I-P_h)u}\big) + \norm{dP_h u}^2
  \\
  & \mspace{50mu} + \alpha_h \norm{(I-P_h)u}^2 + \norm{p}^2 + \norm{u_\frH}^2
\end{aligned}
\end{equation*}
where we have used in several places the orthogonality of the conforming Hodge-Helmholtz decomposition
and the fact that $d$ and $dP_h$ vanish on $\frB^c_h$ and $\frH^c_h$.
In the last sum, the products' amplitude may be bounded from above
\begin{equation*}
\frac{1}{\bcp^2} \abs{\sprod{\sigma}{\rho}} \le \frac{1}{\bcp^2} \norm{\sigma}\norm{\rho}
  \le \frac{1}{\bcp}  \norm{\sigma}\norm{u_\frB} \le \frac{1}{2} \norm{\sigma}^2
    + \frac{1}{2\bcp^2} \norm{u_\frB}^2~,
\end{equation*}
\begin{equation*}
\text{and} \qquad  \frac{1}{\bcp^2}\abs{\sprod{u_\frB}{(I-P_h)u}} \le \frac{\beta\alpha_h}{2\bcp^2} \norm{(I-P_h)u}^2 + \frac{1}{2\beta\alpha_h\bcp^2}
  \norm{u_\frB}^2
\end{equation*}
for an arbitrary $\beta > 0$. This allows us to bound the sum from below
\begin{equation*}
  \begin{aligned}
  b_h(\sigma, u, p; \tau,v,q) &\ge
  \frac 12 \norm{\sigma}_{V_h}^2 + \frac{1}{\bcp^2} \Big(1 - \frac{1}{2\beta\alpha_h} - \frac{1}{2} \Big)\norm{u_\frB}^2
  \\
  &\qquad + \norm{dP_h u}^2 + \alpha_h \Big(1 - \frac{\beta}{2\bcp^2}\Big) \norm{(I-P_h)u}^2 + \norm{p}^2
  + \norm{u_\frH}^2.
\end{aligned}
\end{equation*}
Up to using a larger constant $\bcp \leftarrow \max(\bcp, \ualp^{-1/2})$, we can assume
$\alpha^{-1}_h \le \ualp^{-1} \le \bcp^2$ so that taking $\beta = \frac 32 \bcp^2$ yields
\begin{equation*}
  b_h(\sigma, u, p; \tau,v,q)
  \ge C \big(\norm{\sigma}_{V_h}^2 + \norm{u_\frB}^2
    + \norm{dP_h u}^2 + \norm{(I-P_h)u}^2 + \norm{p}^2
  + \norm{u_\frH}^2 \big)
\end{equation*}
with $C = C(\ualp,\bcp)$.
(For the bilinear form described in Remark~\ref{rem:unfilter_lhs},
the same reasonning yields a term $\sprod{(I-P_h)u}{u_\frH}$ which may be bounded
from below by $-\frac{1}{2}(\mu\norm{(I-P_h)u}^2 + \mu^{-1}\norm{u_\frH}^2)$:
the latter can be absorbed in the above bound under the condition that $\frac{\mu}{2} < \ualp$
and $\frac{1}{2\mu} < 1$, hence the result stated in Remark~\ref{rem:stab_unfilter_lhs}.)
Since
$
\norm{u}^2 
\le C(\norm{u_\frH}^2  + \norm{u_\frB}^2 + \norm{dP_h u}^2 + \norm{(I-P_h)u}^2)
$
according to the decomposition of $u$ and \eqref{pcr_rho_up},
we find $b_h(\sigma, u, p; \tau,v,q) \ge C (\norm{\sigma}_{V_h}^2  + \norm{u}_{V_h}^2  + \norm{p}^2)$
and the desired estimate follows from \eqref{tvq}.
Finally we observe that the identity $(I-P_h)v = (I-P_h)u$ is clear in this construction.
\end{proof}

Reasoning as in Section~\ref{sec:HL}, one infers from Lemma~\ref{lem:bstab_h}
that the CONGA Hodge Laplace source problem is well-posed.
Specifically, the following result holds.

 \begin{theorem} \label{th:wp}
  Problem~\eqref{HL_mixed_h} admits a unique solution
  $(\sigma_h,u_h,p_h) \in X_h$ which satisfies
  \begin{equation} \label{ustab_h}
    \norm{\sigma_h}_{V_h} + \norm{u_h}_{V_h}+\norm{p_h} \le C \norm{f}
  \end{equation}
  with a constant which only depends on $\bcp$ and $\ualp$ from \eqref{bcp} and \eqref{ualp}. 
  The same result holds for the unfiltered variant \eqref{unfilter_f}.
\end{theorem}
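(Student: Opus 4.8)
The plan is to package the inf-sup estimate of Lemma~\ref{lem:bstab_h} through the standard Babuška–Nečas theory, exactly as the continuous problem was handled after~\eqref{infsup}. First I would record the algebraic symmetry of the broken form, namely that
\begin{equation*}
b_h(\sigma, u, p; \tau,v,q) = b_h(\tau, -v, q; \sigma,-u,p) \qquad \forall (\sigma,u,p), (\tau,v,q) \in X_h,
\end{equation*}
which is checked directly by inspecting the seven terms of \eqref{bh} and using the symmetry of the $W$ inner product (the two diagonal terms $\norm{dP_h\cdot}^2$ and $\alpha_h\norm{(I-P_h)\cdot}^2$ are invariant under the double sign flip, while the off-diagonal pairs swap consistently). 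This is the broken analog of the identity $b(\sigma,u,p;\tau,v,q)=b(\tau,-v,q;\sigma,-u,p)$ used in the conforming case.

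Next, Lemma~\ref{lem:bstab_h} asserts precisely that for every $x=(\sigma,u,p)\in X_h$ there exists $y=(\tau,v,q)\in X_h$ with $b_h(x,y)\ge\gamma\norm{x}_{X_h}\norm{y}_{X_h}$, hence
\begin{equation*}
  \inf_{x \in X_h}\sup_{y\in X_h} \frac{b_h(x,y)}{\norm{x}_{X_h}\norm{y}_{X_h}} \ge \gamma > 0,
\end{equation*}
with $\gamma$ depending only on $\ualp$ and $\bcp$. Combined with the symmetry above, this yields the transposed inf-sup condition with the same constant. I would then introduce the operator $B_h:X_h\to X_h'$ defined by $\sprod{B_h x}{y}_{X_h'\times X_h}=b_h(x,y)$: the inf-sup condition makes $B_h$ injective with closed range, and since $X_h$ is finite-dimensional (a product of finite element spaces), injectivity already forces bijectivity, so that Problem~\eqref{HLh_b_form} is uniquely solvable. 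Continuity of $b_h$ from \eqref{bcont} guarantees that $B_h$ is well-defined and bounded.

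It remains to check that the right-hand side defines a bounded functional and to extract the stability bound. The map $y=(\tau,v,q)\mapsto\sprod{f}{P_h v}$ is linear on $X_h$, and by the uniform $W$-boundedness \eqref{Phstab} together with the norm definition \eqref{normVh},
\begin{equation*}
  \abs{\sprod{f}{P_h v}} \le \norm{f}\,\norm{P_h v} \le C\norm{f}\,\norm{v} \le C\norm{f}\,\norm{y}_{X_h},
\end{equation*}
so \eqref{HL_mixed_h} indeed admits a unique solution $(\sigma_h,u_h,p_h)\in X_h$. For the a priori bound \eqref{ustab_h}, I would apply the inf-sup estimate to this solution $x$ and use $b_h(x,y)=\sprod{f}{P_h v}$:
\begin{equation*}
  \norm{x}_{X_h} \le \gamma^{-1}\sup_{y\in X_h}\frac{b_h(x,y)}{\norm{y}_{X_h}}
  = \gamma^{-1}\sup_{y\in X_h}\frac{\sprod{f}{P_h v}}{\norm{y}_{X_h}} \le C\gamma^{-1}\norm{f},
\end{equation*}
which gives the claimed estimate with a constant depending only on $\ualp$ and $\bcp$ through $\gamma$.

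The genuinely hard ingredient—the inf-sup lower bound—is already supplied by Lemma~\ref{lem:bstab_h}, so the remaining steps are the routine Nečas packaging of existence, uniqueness, and stability. The one point deserving attention is the continuity of the source functional: here the filtering of $f$ by $P_h$ in the right-hand side is essential, since it allows a general $f\in W^\ell$ (not necessarily in any broken domain) to be tested against $\norm{y}_{X_h}$ via \eqref{Phstab}, cf.\ Remark~\ref{rem:unfiltered}.
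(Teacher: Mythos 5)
Your proposal is correct and follows exactly the route the paper intends: the paper proves Theorem~\ref{th:wp} with a one-line remark (``reasoning as above, one infers from Lemma~\ref{lem:bstab_h}\dots''), deferring to the Babu\v{s}ka--Ne\v{c}as argument already given for the continuous problem \eqref{HL_b_form}, i.e.\ the symmetry identity $b_h(\sigma,u,p;\tau,v,q)=b_h(\tau,-v,q;\sigma,-u,p)$, the inf-sup bound from Lemma~\ref{lem:bstab_h}, the operator $B_h:X_h\to X_h'$, and the $W$-boundedness \eqref{Phstab} of $P_h$ to control the filtered source functional. Your write-up simply spells out those details (including the harmless passage between the sum norm and the Hilbert norm on $X_h$), so it matches the paper's proof in substance.
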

According to \eqref{HHdec_2} and \eqref{HHdec_c} we can decompose the solution to Problem~\eqref{HL_mixed_h} as
\begin{equation} \label{uh_dec}
  u_h = u_\frB + u_\frH + u_{\frB^*} + (I-P_h)u_h
  ~ \in ~ \big(\frB^{\ell,c}_h \poplus \frH^{\ell,c}_h \poplus \frB^{*,c}_{\ell,h} \big) \oplus (I-P_h)V^\ell_h
\end{equation}
and observe that some components may be characterized using different types of test functions,
as with the conforming solution $u^c_h = u^c_\frB + u^c_\frH + u^c_{\frB^*}$ in \eqref{uc_dec}--\eqref{HL_mixed_hc_B*}.
One has for instance
\begin{equation} \label{HL_mixed_h_B*}
  \sprod{d u_{\frB^*}}{d v} = \sprod{f}{v} \quad \forall \, v \in \frB^{*,c}_{\ell,h}~,
\quad \text{ hence } \quad
u_{\frB^*} = u^c_{\frB^*}
\end{equation}
where we have used \eqref{HL_mixed_hc_B*},
and with $v \in \frB^{\ell,c}_h$ we obtain
\begin{equation} \label{HL_mixed_h_B}
  \left\{\begin{aligned}
   \sprod{\sigma_h}{\tau} - \sprod{dP_h \tau}{u_h} &= 0 \quad &&\forall \, \tau \in V^{\ell-1}_h
   \\
   \sprod{dP_h \sigma_h}{v} &= \sprod{f}{v}  \quad &&\forall \, v \in \frB^{\ell,c}_h~.
 \end{aligned} \right.
\end{equation}
Meanwhile, taking $v$ and $q$ in the harmonic subspace $\frH^{\ell,c}_h$ gives
\begin{equation} \label{p_uH}
  p_h = Q_{\frH^{c}_h} f
  = p^c_h 
  \quad \text{ and } \quad
  u_\frH = 0 = u^c_\frH
\end{equation}
and with $v = (I-P_h)v \in (I-P_h)V^{\ell}_h$ we further find
\begin{equation} \label{HL_mixed_h_jumps}
  \sprod{dP_h \sigma_h}{(I-P_h)v} + \alpha_h \sprod{(I-P_h)u_h}{(I-P_h)v} = 0,
  \qquad \forall \, v \in V^{\ell}_h~.
\end{equation}
From this last equality one may infer an a priori bound on the nonconforming part $(I-P_h)u_h$.
Indeed, \eqref{HL_mixed_h_B} shows that $dP_h \sigma_h = Q_{\frB^{c}_h}f$. 
Setting $v = u_h$ in \eqref{HL_mixed_h_jumps} gives then
\begin{equation} \label{jbound_fil}
  \norm{(I-P_h)u_h} \le \alpha_h^{-1} \norm{dP_h \sigma_h} \le \alpha_h^{-1} \norm{f}.
\end{equation}

\begin{remark} \label{rem:unfiltered}
  The solution to the unfiltered problem \eqref{HLh_b_unfiltered} may be decomposed as above, 
  with the only difference that \eqref{HL_mixed_h_jumps} holds 
  with a right-hand side $\sprod{f}{(I-P_h)v}$. 
  In turn, we obtain a bound similar to \eqref{jbound_fil} for the solution jumps, i.e.
    $\norm{(I-P_h)u_h} \le \alpha_h^{-1} \big(\norm{(I-Q_{\frB^{c}_h})f}\big) \le \alpha_h^{-1} \norm{f}$.
\end{remark}

\subsection{A priori error analysis}
\label{analysis}

To establish error bounds 
we now assume that the conforming projection operator $P^\ell_h$ can be extended
to the full space $W^\ell$ into some 
$\bar P_h = \bar P^\ell_h : W^\ell \to V^{\ell,c}_h$
that is uniformly bounded in $W$ and $V$, i.e. 
\begin{equation} \label{bPhstab}
  \norm{\bar P_h v} \le C \norm{v}
  \qquad \text{ and } \qquad
  \norm{\bar P_h v}_V \le C \norm{v}_V
\end{equation}
hold on $W^\ell$ and $V^\ell$ with constants independent of $h$. As a bounded operator on $W$, $\bar P_h$
has an adjoint $\bar P^*_h = (\bar P^\ell_h)^* : W^\ell \to W^\ell$
which is also a bounded projection, and its image
\begin{equation} \label{MP*}
  M^\ell_h := \bar P^*_h W^\ell
\end{equation}
corresponds to the moments preserved by $\bar P_h$, as we have
\begin{equation} \label{char_MP*}
\sprod{\bar P_h v}{w} = \sprod{v}{\bar P^*_h w} = \sprod{v}{w},
\qquad \forall v \in W^\ell, ~~ w \in M^\ell_h~.
\end{equation}
As uniformly bounded projections, these operators satisfy
\begin{equation} \label{optP}
  \norm{(I-\bar P_h) v} \le C \inf_{w \in V^{\ell,c}_h} \norm{v-w},
  \qquad 
  \norm{(I-\bar P_h) v}_V \le C \inf_{w \in V^{\ell,c}_h} \norm{v-w}_V
\end{equation}
(these estimates follow by writing 
$(I-\bar P_h)v = (I-\bar P_h)(v-w)$ for all $w \in V^{l,c}_h$, and using
\eqref{bPhstab}) 
and similarly 
\begin{equation} \label{optP*}
  \norm{(I-\bar P_h^*) v} \le C \inf_{w \in M^{\ell}_h} \norm{v-w}
\end{equation}
with constants independent of $h$. Note that no $V$-error estimate holds
for $\bar P_h^*$, as this operator may not map onto $V^\ell$.
We also introduce an extended Hilbert space
that contains both the exact and discrete solutions,
\begin{equation} \label{X(h)}
X(h) := V^{\ell-1}(h) \times V^{\ell}(h) \times W^\ell
\quad \text{ with } \quad
  \norm{(\tau, v, q)}_{X(h)}^2 := 
  \norm{\tau}_{V(h)}^2 + \norm{v}_{V(h)}^2 + \norm{q}^2
\end{equation}
where we have set $V(h) = V + V_h$ with $\norm{v}_{V(h)}^2 = \norm{v}^2 + \norm{d\bar P_h v}^2$.
We see that both $X$ and $X_h$ are indeed subspaces of $X(h)$, see \eqref{HL_mixed}, \eqref{Xh}
(their sum being a proper subspace, as $\frH^{\ell}$ and $\frH^{\ell,c}_h$ are proper subspaces of $W^\ell$)
and that the norm of $V(h)$, resp. $X(h)$, coincides with that of $V_h$, resp. $X_h$,
on the latter space, since $\bar P_h$ coincides with $P_h$ on $V_h$. From \eqref{bPhstab} one also infers
\begin{equation} \label{X(h)boundX}
\norm{(\tau, v, q)}_{X(h)} \le C (\norm{\tau}_V + \norm{v}_V + \norm{q}) \quad \text{ for }
(\tau, v, q) \in V^{\ell-1} \times V^{\ell} \times W^\ell.
\end{equation}
Accordingly, we extend the broken bilinear form $b_h$, see \eqref{bh}, into
\begin{multline} \label{bbh}
  \bar b_h(\sigma, u, p; \tau,v,q) := \sprod{\sigma}{\tau} - \sprod{d\bar P_h \tau}{u} + \sprod{d\bar P_h \sigma}{v}
    + \sprod{d\bar P_h u}{d\bar P_h v} \\
      + \alpha_h \sprod{(I-\bar P_h)u}{(I-\bar P_h)v}+ \sprod{\bar P_h v}{p} - \sprod{\bar P_h u}{q},
\end{multline}
defined on $X(h)$. This bilinear form is again continuous thanks to \eqref{bPhstab},
\begin{equation} \label{bbcont}
  \bar b_h(x,y) \le C (1+\alpha_h)\norm{x}_{X(h)}\norm{y}_{X(h)} \qquad \forall x, y \in X(h)
\end{equation}
however no stability holds on $X(h)$, due to the fact that the extented space for harmonic fields
is the full space $W^\ell$.
We further note that $\bar b_h$ coincides with $b_h$ on $X_h$ since again $\bar P_h$ coincides with $P_h$ on
$V_h$, but it does not coincide with $b$ on $X$, see \eqref{b}.
In our nonconforming framework, we then extend \cite[Th.~3.9]{Arnold.Falk.Winther.2010.bams}
as follows.
\begin{theorem} \label{th:est}
  Let $x = (\sigma, u, p)$ and $x_h = (\sigma_h, u_h, p_h)$ be the solutions to the
  continuous and discrete problems \eqref{HL_b_form} and \eqref{HLh_b_form} respectively.
  The estimate
  \begin{equation} \label{errestim}
    \norm{x - x_h}_{X_h} \le C \Big( \eta(x) + \min\big(\alpha_h\eta(x),
        \alpha_h^{-1}\norm{f}\big) \Big)
  \end{equation}
  holds with
  \begin{equation} \label{eta}
  \begin{aligned}
    \eta(x) &= \inf_{\rho \in V^{\ell-1,c}_h} \norm{\sigma - \rho}_V
          + \inf_{w \in V^{\ell,c}_h} \norm{u - w}_V
          + \inf_{w \in V^{\ell,c}_h} \norm{p - w}_V
          + \norm{Q_{\frH^c_h}u}
         \\
         & \mspace{300mu}
         + \inf_{\rho \in M^{\ell-1}_h} \norm{\sigma - \rho}
         + \inf_{w \in M^{\ell}_h} \norm{d\sigma - w}
  \end{aligned}
  \end{equation}
  and a constant independent of $h$.
\end{theorem}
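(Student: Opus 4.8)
The plan is to run a nonconforming (second Strang) argument built on the discrete stability of Lemma~\ref{lem:bstab_h}, treating the mismatch between $b_h$ and the continuous form as a variational crime. Since both $x$ and $x_h$ lie in $X(h)$, I would first fix a \emph{conforming} interpolant $y_h = (\bar P_h \sigma, \bar P_h u, \tilde p_h) \in X_h$, with $\tilde p_h \in \frH^{\ell,c}_h$ a good approximation of $p$, and split
\begin{equation*}
  \norm{x - x_h}_{X_h} \le \norm{x - y_h}_{X(h)} + \norm{y_h - x_h}_{X_h}.
\end{equation*}
The first summand is a pure approximation error: by \eqref{X(h)boundX}, \eqref{optP} and the boundedness of $\bar P_h$ it is controlled by the best-approximation contributions $\inf_\tau \norm{\sigma - \tau}_V + \inf_v \norm{u - v}_V$ of $\eta(x)$ together with $\norm{p - \tilde p_h}$, so the work concentrates on the second summand.

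For the second summand I would apply Lemma~\ref{lem:bstab_h} to $w := y_h - x_h \in X_h$, obtaining a test triple $z = (\tau, v, q) \in X_h$ with $\norm{z}_{X_h} \le C \norm{w}_{X_h}$, $(I - P_h)v = (I - P_h)w$, and $\gamma \norm{w}_{X_h}^2 \le b_h(w; z)$. Because $x_h$ solves \eqref{HLh_b_form} and $b_h$ agrees with $\bar b_h$ on $X_h$, I rewrite $b_h(w; z) = \bar b_h(y_h - x; z) + \big(\bar b_h(x; z) - \sprod{f}{P_h v}\big)$. The approximation part $\bar b_h(y_h - x; z)$ is dispatched by the continuity \eqref{bbcont} and \eqref{optP}; its non-penalty contributions are $\le C\eta(x)\norm{z}_{X_h}$ (and several vanish outright since $\bar P_h(I - \bar P_h) = 0$). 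The crux is then the consistency residual $\bar b_h(x; z) - \sprod{f}{P_h v}$.

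To estimate that residual I would insert the continuous equations \eqref{HL_mixed} tested against the conforming functions $\bar P_h \tau \in V^{\ell-1}$ and $\bar P_h v \in V^{\ell}$ (legitimate, as the conforming spaces are subspaces of the domains), and use these to eliminate $\sprod{f}{\bar P_h v}$. After cancellations the surviving residuals sort into three families. The terms paired with $\tau$ collapse to $\sprod{(I - \bar P_h^*)\sigma}{\tau}$, bounded via \eqref{optP*} by $\inf_{\rho \in M^{\ell-1}_h}\norm{\sigma - \rho}$; the terms carrying $d\bar P_h \sigma$ and $d\bar P_h u$ split into a conforming piece (absorbed into the $V$-approximation terms of $\eta(x)$) and a jump piece such as $\sprod{d\bar P_h \sigma}{(I - P_h)v}$, which I would rewrite as $\sprod{d\sigma - \omega}{(I - P_h)v}$ for $\omega \in M^{\ell}_h$ using the orthogonality $\sprod{\omega}{(I - P_h)v} = 0$ that follows from the moment identity \eqref{char_MP*}, thereby producing the $\inf_{\omega \in M^{\ell}_h}\norm{d\sigma - \omega}$ contribution; finally the harmonic term $-\sprod{\bar P_h u}{q}$ with $q \in \frH^{\ell,c}_h$ is estimated by $\norm{P_{\frH^c_h} u}\,\norm{q}$ up to an $O(\eta(x))$ perturbation, since $u \perp \frH^{\ell}$ in the continuous problem. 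Each piece thus lands on exactly one term of $\eta(x)$.

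The one remaining and, I expect, genuinely delicate contribution is the penalty term $\alpha_h \sprod{(I - \bar P_h)u}{(I - P_h)v}$, which carries the explicit factor $\alpha_h$; its handling is where the sharpness of \eqref{errestim} is decided. I would bound it in two complementary ways. Directly, Cauchy--Schwarz and \eqref{optP} give $C\,\alpha_h \eta(x)\,\norm{z}_{X_h}$, hence the contribution $\alpha_h\eta(x)$. Alternatively, the a priori jump bound \eqref{jbound_fil}, $\norm{(I - P_h)u_h} \le \alpha_h^{-1}\norm{f}$, combined with the identification of the conforming part of $x_h$ with the conforming discrete solution (whose error is itself $O(\eta(x))$), controls the nonconforming part of the error by $\alpha_h^{-1}\norm{f}$; this yields instead the contribution $\alpha_h^{-1}\norm{f}$. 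Retaining the smaller of the two produces the factor $\min(\alpha_h \eta(x), \alpha_h^{-1}\norm{f})$. Collecting everything gives $b_h(w; z) \le C\big(\eta(x) + \min(\alpha_h\eta(x), \alpha_h^{-1}\norm{f})\big)\norm{z}_{X_h}$; dividing by $\norm{w}_{X_h}$, using $\norm{z}_{X_h} \le C\norm{w}_{X_h}$, bounds $\norm{y_h - x_h}_{X_h}$, and the triangle inequality closes \eqref{errestim}. The main obstacle is precisely the consistency bookkeeping of the third paragraph together with the two-sided penalty estimate: matching each variational-crime residual to its designated term of $\eta(x)$ via moment preservation, while extracting the sharp $\min$ from the penalty, is the step requiring the most care.
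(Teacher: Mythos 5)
Your first branch --- the bound $\norm{x-x_h}_{X_h}\le C(1+\alpha_h)\eta(x)$ --- is essentially the paper's own proof: a conforming interpolant, the discrete stability of Lemma~\ref{lem:bstab_h}, the consistency residual organized through the extended form $\bar b_h$, and the moment identities \eqref{char_MP*}, \eqref{optP*} to convert each residual term into a term of $\eta(x)$ (the paper uses the $V$-orthogonal projection of $x$ rather than $\bar P_h x$ as interpolant, an inessential difference). The gap is in your second branch, the one producing $\alpha_h^{-1}\norm{f}$. You invoke ``the identification of the conforming part of $x_h$ with the conforming discrete solution,'' i.e.\ $P_h u_h = u^c_h$ (and implicitly $\sigma_h = \sigma^c_h$). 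This identification is false in general. Decomposing $P_h u_h = u_\frB + u_\frH + u_{\frB^*}$ as in \eqref{uh_dec}, the subsystems \eqref{HL_mixed_h_B*}--\eqref{p_uH} give only $u_{\frB^*}=u^c_{\frB^*}$, $u_\frH = u^c_\frH = 0$ and $p_h = p^c_h$; the remaining component is coupled to the jumps, since testing the first equation of \eqref{HL_mixed_h} with conforming $\tau$ yields $\sprod{\sigma_h}{\tau}-\sprod{d\tau}{u_\frB}=\sprod{d\tau}{(I-P_h)u_h}$, a perturbed version of \eqref{HL_mixed_hc_B}, so that $u_\frB\ne u^c_\frB$ and $\sigma_h\ne\sigma^c_h$. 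Quantifying $\norm{u_\frB-u^c_\frB}$ is precisely the hard part of the proof of Theorem~\ref{th:refest}, where it requires $s$-regularity of the domain and the extra hypotheses \eqref{pi_approx}, \eqref{PP*_approx} --- none of which are available in Theorem~\ref{th:est}; and justifying the identification by the error estimate you are proving would be circular.

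Note also that the alternative cannot be implemented as a second bound on the single residual term $\alpha_h\sprod{(I-\bar P_h)u}{(I-P_h)v}$ inside the same Strang argument: the test function of Lemma~\ref{lem:bstab_h} applied to $y_h-x_h$ satisfies $(I-P_h)v=-(I-P_h)u_h$, so Cauchy--Schwarz with \eqref{jbound_fil} gives a bound $C\,\eta(x)\norm{f}$ that carries no factor $\norm{(\tau,v,q)}_{X_h}$; this breaks the stability structure and at best recovers terms like $\sqrt{\eta(x)\norm{f}}$, which by AM--GM is weaker than $\min\big(\alpha_h\eta(x),\alpha_h^{-1}\norm{f}\big)$. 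The paper's device is different and is what you are missing: replace $x_h$ by the partially conforming $\t x_h=(\sigma_h,P_hu_h,p_h)$, derive the modified error equation \eqref{erreq_c}, and apply Lemma~\ref{lem:bstab_h} to $\t x^c_h-\t x_h$. Since the $u$-component of this difference is conforming, the lemma's property $(I-P_h)v=(I-P_h)u$ forces the test function $v$ to be conforming, so every $\alpha_h$-dependent term in the continuity and residual bounds vanishes identically; the discarded jump $\norm{(I-P_h)u_h}\le\alpha_h^{-1}\norm{f}$ is then restored by the triangle inequality. Some such mechanism is needed to close the $\alpha_h^{-1}\norm{f}$ branch.
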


\begin{remark}
  This estimate suggests two stabilization/penalization strategies. In a ``weak'' stabilization regime where
  $\alpha_h$ is uniformly bounded with $h$, the error is bounded by the error term $\eta(x)$
  which, up to the approximation errors from the moment spaces $M^{\ell-1}_h$ and $M^{\ell}_h$,
  is similar to the one involved in \cite[Th.~3.9]{Arnold.Falk.Winther.2010.bams} and leads to
  high order convergence rates for smooth solutions.
  Another option is to choose a ``strong'' penalization regime with $\alpha_h \to \infty$ 
  as $h$ is refined (see Remark~\ref{rem:h}),
  in which case high order convergence rates are also possible.
  In view of Theorem~\ref{th:refest} below, and as our numerical result suggest,
  this latter strategy seems to give better results for the eigenvalue problem.
\end{remark}

\begin{remark} \label{rem:errestim_unfiltered}
  For the unfiltered source problem \eqref{HLh_b_unfiltered}, 
  it also holds that 
  \begin{equation} \label{errestim_unfiltered}
    \norm{x - x_h}_{X_h} \le C \big( \eta(x) + \alpha_h^{-1}\norm{f}\big)
  \end{equation}
  which is a useful estimate for a strong penalization regime.
  A converging estimate for the weak stabilization regime can also be established, with
  an additional source approximation error in the upper bound.
\end{remark}

\begin{proof}[Proof of Theorem~\ref{th:est} and Remark~\ref{rem:errestim_unfiltered}.~] 
Given an arbitrary test tuple $(\tau, v, q) \in X_h$, we recall that
the discrete solution satisfies
\begin{equation} \label{baserr_bhxh}
b_h(x_h; \tau, v, q) = \sprod{f}{P_h v},
\end{equation}
wheras for the exact solution we may write
\begin{equation} \label{baserr_bx}
  b(x; P_h\tau, P_h v, q) = \sprod{f}{P_h v} - \sprod{u}{q} = \sprod{f}{P_h v} -\sprod{Q_{\frH^c_h}u}{q}
\end{equation}
the latter term being a priori non zero since $\frH^{\ell,c}_h \not\subset \frH^\ell$ in general.
To handle the discrepancy between the discrete and continuous bilinear forms
we next use the extended bilinear form \eqref{bbh} and compute that
\begin{equation} \label{baserr_bhx}
  \bar b_h(x; \tau, v, q)
    = b(x; P_h \tau, P_h v, q)
      + r(x; \tau, v, q)
\end{equation}
with a remainder term
\begin{equation} \label{rem}
  \begin{aligned}
    r(x; \tau, v, q)
      &= \sprod{(I-\bar P_h^*)\sigma}{\tau} + \sprod{d(\bar P_h-I)\sigma}{v} + \sprod{(I-\bar P_h^*)d\sigma}{v}
      \\
      & \quad + \sprod{d(\bar P_h-I)u}{d P_h v}
      + \alpha_h \sprod{(I-\bar P_h^*)(I-\bar P_h)u}{v}
      + \sprod{(I-\bar P_h)u}{q}
      \\
      &\le C (1+\alpha_h)\ve(x) \norm{(\tau,v,q)}_{X_h}
  \end{aligned}
\end{equation}
where 
$\ve(x) := \norm{(I-\bar P_h^*)\sigma} + \norm{d(I-\bar P_h)\sigma} + \norm{(I-\bar P_h^*)d\sigma} + \norm{(I-\bar P_h)u}_V$
satisfies
\begin{equation} \label{boundeps}
  \begin{aligned}
    \ve(x) 
    &\le C \Big(\inf_{\rho \in M^{\ell-1}_h} \norm{\sigma - \rho}
          + \inf_{\rho \in V^{\ell-1,c}_h} \norm{\sigma - \rho}_V
           + \inf_{w \in M^{\ell}_h} \norm{d\sigma - w}
           + \inf_{w \in V^{\ell,c}_h} \norm{u-w}_V \Big)
    \\
    &\le C \eta(x)
 \end{aligned}
\end{equation}
by using \eqref{optP}--\eqref{optP*}, and the definition of $\eta$ in \eqref{eta}.
Since $b_h$ and $\bar b_h$ coincide on $X_h$ this yields an error equation, 
\begin{equation} \label{erreq}
  \bar b_h(x-x_h; \tau, v, q)
  = r(x; \tau, v, q) - \sprod{Q_{\frH^c_h}u}{q}.
\end{equation}
To use the stability Lemma~\ref{lem:bstab_h} we next let 
$
\tilde x^c_h = (\tilde \sigma_h, \tilde u_h, \tilde p_h) \in X^c_h \subset X_h
$
be the $V$-projection of the exact solution $x = (\sigma,u,p)$ onto the discrete 
conforming spaces $V^{\ell-1,c}_h$, $V^{\ell,c}_h$ and $\frH^{\ell,c}_h$:
by optimality of the orthogonal projections this gives
$$
\norm{\tilde \sigma_h - \sigma}_V = \inf_{\rho \in V^{\ell-1,c}_h} \norm{\sigma - \rho}_V
\quad \text{ and } \quad 
\norm{ \tilde u_h - u}_V = \inf_{w \in V^{\ell,c}_h} \norm{u - w}_V.
$$
For the projection error on $\frH^{\ell,c}_h$ we invoke equation~(33) 
from the proof of \cite[Th.~3.9]{Arnold.Falk.Winther.2010.bams},
which reads with the present notation
$$
\norm{\tilde p_h - p} \le C \norm{(I-\pi_h)p} \le C \inf_{w \in V^{\ell,c}_h} \norm{p - w}_V.
$$
This shows that 
$\norm{\tilde \sigma_h - \sigma}_V + \norm{ \tilde u_h - u}_V + \norm{\tilde p_h - p} \le C \eta(x)$.
Using \eqref{X(h)boundX} we next find that
$\norm{\tilde x^c_h - x}_{X(h)} \le C (\norm{\tilde \sigma_h - \sigma}_V + \norm{ \tilde u_h - u}_V + \norm{\tilde p_h - p})$,
so that the previous bound and \eqref{boundeps} yield
\begin{equation} \label{err_tx}
  \norm{\tilde x^c_h - x}_{X(h)} + \ve(x)
  \le C \eta(x).
\end{equation}
Using next the error equation \eqref{erreq}, the continuity \eqref{bbcont} in the extended space $X(h)$ and
the estimate \eqref{rem}, we write for $\tilde x^c_h - x_h \in X_h$
\begin{multline*} 
    b_h(\tilde x^c_h - x_h; \tau, v, q)
    = \bar b_h(\tilde x^c_h - x_h; \tau, v, q)
    = \bar b_h(\tilde x^c_h - x; \tau, v, q) + \bar b_h(x - x_h; \tau, v, q)
    \\
    \le C(1+\alpha_h)\eta(x) \norm{(\tau,v,q)}_{X_h},
\end{multline*}
hence the stability Lemma~\ref{lem:bstab_h} applies. Together with \eqref{err_tx},
it allows us to write
\begin{equation*}
\norm{x - x_h}_{X(h)} \le
  \norm{x - \tilde x^c_h}_{X(h)} +
  \norm{\tilde x^c_h - x_h}_{X_h}
  \le C(1+\alpha_h)\eta(x)
\end{equation*}
where we have used that the norms of $X(h)$ and $X_h$ coincide on the latter space.
This shows the first part of estimate \eqref{errestim}.
To show the second part (and the estimate~\eqref{errestim_unfiltered} in the case
of the unfiltered source problem) we consider the modified (partially conforming) solution
$
\tilde x_h := (\sigma_h, P_h u_h, p_h)
$
and write, in place of \eqref{baserr_bhxh},
\begin{equation} \label{baserr_bhxhc}
  \begin{aligned}
    b_h(\tilde x_h; \tau, P_h v, q)
    &= b_h(x_h; \tau, P_h v, q) - b_h(0,(I-P_h)u_h, 0; \tau, P_h v, q)
    \\
    &= \sprod{f}{P_h v} + \sprod{dP_h\tau}{(I-P_h)u_h},
  \end{aligned}
\end{equation}
which leads to a modified error equation
\begin{equation} \label{erreq_c}
  \bar b_h(x-\tilde x_h; \tau, P_hv, q)
  = r(x; \tau, P_hv, q) - \sprod{Q_{\frH^c_h}u}{q} - \sprod{dP_h\tau}{(I-P_h)u_h}.
\end{equation}
We then observe that for such a partially conforming test function, one may drop the
parameter $\alpha_h$ in the continuity \eqref{bcont} and in the estimate \eqref{rem},
leading to
\begin{equation*} 
  \begin{aligned}
    b_h(\tilde x^c_h - \tilde x_h; \tau, P_h v, q)
    &= \bar b_h(\tilde x^c_h - x; \tau, P_h v, q) + \bar b_h(x - \tilde x_h; \tau, P_h v, q)
    \\
    &\le C\big(\eta(x) + \norm{(I-P_h)u_h}\big) \norm{(\tau,P_h v,q)}_{X_h}
  \end{aligned}
\end{equation*}
where we have also used \eqref{err_tx}.
We then invoke again Lemma~\ref{lem:bstab_h} with the partially conforming
$\tilde x^c_h - \tilde x_h$: this allows us to use a conforming test function $v=P_h v$, hence 
\begin{equation*}
\norm{x - \tilde x_h}_{X(h)} \le
\norm{x - \tilde x^c_h}_{X(h)} + \norm{\tilde x^c_h - \tilde x_h}_{X_h}
  \le C\big( \eta(x) + \norm{(I-P_h)u_h} \big),
\end{equation*}
where \eqref{err_tx} has been used again to bound $\norm{x - \tilde x^c_h}_{X(h)}$.
To complete the proof we observe that
$  \norm{x - x_h}_{X(h)} \le \norm{x - \tilde x_h}_{X(h)} + \norm{(I-P_h)u_h}$
and bound the jump terms with \eqref{jbound_fil} (or Remark~\ref{rem:unfiltered} for the unfiltered problem).
 \end{proof}


\subsection{The Hodge Laplace eigenvalue problem}
\label{sec:brokeigen}

The eigenvalue problem for the Hodge Laplacian operator
is to find $\lambda \in \RR$ and $u \in D(L^\ell)\setminus \{0\}$, solution to
\begin{equation} \label{HL_eig}
  L^\ell u = \lambda u.
\end{equation}
In our broken-FEEC framework we consider its approximation by the CONGA 
operator \eqref{Lha} with positive stabilization parameter $\alpha_h > 0$,
see assumption~\ref{as:ualp}.
The associated problem thus consists of finding 
$\lambda_h \in \RR$ and $u_h \in V^\ell_h \setminus \{0\}$ such that
\begin{equation} \label{HL_eig_h}
  L^\ell_{h,\alpha_h} u_h = \lambda_h u_h.
\end{equation}
In the conforming case the convergence of discrete eigenvalue problems in
the sense of \cite{Boffi.2006.csd} has been established for various discretizations
of the de Rham sequence \cite{Boffi.2000.numa,Monk.Demkowicz.2001.mcomp,Caorsi.Fernandes.Raffetto.2000.sinum},
and for general $L^2$ Hilbert complexes on $s$-regular domains $\Omega$ with $0 < s \le 1$,
see \cite[Sec.~7.7]{Arnold.Falk.Winther.2006.anum}, under
the assumption that the cochain projection $\pi_h$ is uniformly
bounded in $W^\ell = L^2(\Omega)$ and satisfies (assuming now that $h \to 0$ represents a mesh size)
\begin{equation} \label{pi_approx}
  \norm{(I-\pi_h)v} \le C h^s \norm{v}_{H^s}, \quad \text{ for } v \in H^s(\Omega), ~ 0 \le s \le 1
\end{equation}
with a constant independent of $h$.
Then, it is shown that the solution to the
discrete conforming problem \eqref{HL_mixed_hc} satisfies the refined error estimate
\begin{equation}\label{refest_c}
  \norm{\sigma-\sigma^c_h} + \norm{u - u^c_h} + \norm{du -du^c_h} + \norm{p-p^c_h}
  \le C h^s \norm{f}
\end{equation}
see \cite[Th.~7.10]{Arnold.Falk.Winther.2006.anum}. The convergence of the eigenvalue
problem follows by applying arguments from the perturbation theory of linear operators.
Specifically, introducing the solution operators
$K : f \to u + p$ and $K^c_h : f \to u^c_h + p^c_h$ associated
with the continuous and discrete source problems,
\eqref{HL_b_form} and \eqref{HL_mixed_hc},
estimate~\eqref{refest_c} yields 
\begin{equation} \label{conv_Khc}
  \norm{K - K^c_h}_{\cL(L^2, L^2)} \le C h^s
\end{equation}
so that the convergence $K^c_h \to K$ holds in $L^2$ operator norm as $h \to 0$,
which itself is a necessary and sufficient condition for the convergence of the eigenvalue problem,
see \cite{Boffi.Brezzi.Gastaldi.2000.mcomp} or \cite[Sec.~8.3]{Arnold.Falk.Winther.2006.anum}.

To establish a similar result for our CONGA Hodge Laplacian operator
we consider the solution operator $K_h : f \to u_h + p_h$ associated
with the unfiltered nonconforming source problem \eqref{HLh_b_unfiltered}.
We also complete assumption \eqref{pi_approx} with a similar approximation property for
the conforming projection and its adjoint, namely
\begin{equation} \label{PP*_approx}
  \norm{(I - \bar P_h)v}, \norm{(I-\bar P^*_h)v} \le C h^s \norm{v}_{H^s(\Omega)} \quad \text{ for } ~ 0 \le s \le 1
\end{equation}
which, given \eqref{optP}--\eqref{optP*}, essentially amounts to a
first-order approximation property for the spaces $V^{\ell,c}_h$ and $M^{\ell}_h$.
We then have the following refined estimate.
\begin{theorem} \label{th:refest}
  Assume that the domain $\Omega$ is $s$-regular for some $0 < s \le 1$, and
  that the projection operators satisfy \eqref{pi_approx} and \eqref{PP*_approx}.
  Then for a penalization parameter such that
  \begin{equation} \label{alphacv}
    \alpha_h \ge C h^{-s}
  \end{equation}
  the solutions to the continuous and unfiltered discrete
  problems \eqref{HL_b_form}, \eqref{HLh_b_unfiltered} satisfy
  \begin{equation}\label{refest}
    \norm{\sigma-\sigma_h} + \norm{u - u_h} + \norm{du -dP_hu_h} + \norm{p-p_h}
    \le C h^s \norm{f}
  \end{equation}
  with a constant independent of $h$.
\end{theorem}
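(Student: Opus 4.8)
The plan is to deduce \eqref{refest} from the energy estimate already available for the unfiltered problem, namely \eqref{errestim_unfiltered}, by upgrading the consistency quantity $\eta(x)$ to the rate $h^s$ under the regularity and approximation hypotheses, and by using the strong-penalization choice \eqref{alphacv} to absorb the remaining term. Recall that \eqref{errestim_unfiltered} bounds $\norm{x-x_h}_{X(h)} \le C(\eta(x) + \alpha_h^{-1}\norm{f})$, where I read the difference in the extended norm $\norm{\cdot}_{X(h)}$ (which coincides with $\norm{\cdot}_{X_h}$ on $X_h$). Since $\alpha_h \ge C h^{-s}$ forces $\alpha_h^{-1}\norm{f} \le C h^s\norm{f}$, it suffices to prove $\eta(x) \le C h^s\norm{f}$ and then recover the four contributions in \eqref{refest} from the $X(h)$-norm up to one extra approximation term.

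First I would record the regularity input: the $s$-regularity of $\Omega$ guarantees that the exact solution $x=(\sigma,u,p)$ of \eqref{HL_b_form} satisfies $\norm{\sigma}_{H^s} + \norm{d\sigma}_{H^s} + \norm{u}_{H^s} + \norm{du}_{H^s} + \norm{p}_{H^s} \le C\norm{f}$ with $C$ independent of $h$. I would then bound the six terms of \eqref{eta}. For the three $V$-norm best-approximation terms I would test with the \emph{commuting} cochain projection $\pi_h$ rather than with $\bar P_h$: since $d\pi_h = \pi_h d$ by \eqref{pih}, one has $\norm{(I-\pi_h)v}_V^2 = \norm{(I-\pi_h)v}^2 + \norm{(I-\pi_h)\,dv}^2$, so \eqref{pi_approx} together with the regularity bounds gives $\inf_{\tau}\norm{\sigma-\tau}_V,\ \inf_{v}\norm{u-v}_V,\ \inf_{q}\norm{p-q}_V \le C h^s\norm{f}$ (for the last term note $dp=0$, so its $d$-part vanishes). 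The two moment-space terms are handled directly by \eqref{optP*} and the adjoint approximation property \eqref{PP*_approx}, yielding $\inf_{\rho\in M^{\ell-1}_h}\norm{\sigma-\rho} + \inf_{w\in M^{\ell}_h}\norm{d\sigma-w} \le \norm{(I-\bar P_h^*)\sigma} + \norm{(I-\bar P_h^*)d\sigma} \le C h^s\norm{f}$.

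The remaining term $\norm{P_{\frH^c_h}u}$ is the one I expect to be the main obstacle, since it measures the gap between the discrete harmonic space $\frH^{\ell,c}_h$ and the continuous space $\frH^\ell$ rather than a plain best-approximation error. Here I would exploit that $u\in(\frH^\ell)^{\perpW}$: for any $q_h\in\frH^{\ell,c}_h$ one has $\sprod{u}{q_h} = \sprod{u}{(I-P_{\frH})q_h}$, whence $\norm{P_{\frH^c_h}u} \le \norm{u}\,\gap(\frH^{\ell,c}_h,\frH^\ell)$, and the gap is $O(h^s)$ under \eqref{pi_approx} by the standard FEEC analysis of harmonic forms as in \cite{Arnold.Falk.Winther.2006.anum}. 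This gives $\norm{P_{\frH^c_h}u} \le C h^s\norm{f}$, and collecting all six contributions yields $\eta(x) \le C h^s\norm{f}$.

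Finally I would assemble the estimate. Combining $\eta(x)\le C h^s\norm{f}$, the bound $\alpha_h^{-1}\le C h^s$, and \eqref{errestim_unfiltered} gives $\norm{x-x_h}_{X(h)} \le C h^s\norm{f}$. Because $\norm{\cdot}_{V(h)}\ge\norm{\cdot}$, this single bound already controls the three $L^2$ contributions $\norm{\sigma-\sigma_h} + \norm{u-u_h} + \norm{p-p_h}$. For the energy contribution I would insert $d\bar P_h u$ and split $\norm{du-dP_hu_h} \le \norm{d(I-\bar P_h)u} + \norm{d\bar P_h(u-u_h)}$, using $\bar P_h u_h = P_h u_h$; the second summand is part of the $V(h)$-seminorm of $u-u_h$, hence $\le \norm{x-x_h}_{X(h)} \le C h^s\norm{f}$, while the first is bounded, via \eqref{optP} and again the commuting projection, by $C\norm{(I-\pi_h)u}_V \le C h^s\norm{f}$. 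Summing the four contributions establishes \eqref{refest}.
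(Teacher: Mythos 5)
Your proposal has a genuine gap, and it sits exactly where the paper chose a different route. You deduce \eqref{refest} from the quasi-optimal estimate \eqref{errestim_unfiltered} by claiming $\eta(x) \le C h^s \norm{f}$, and to get there you assert the regularity bound $\norm{d\sigma}_{H^s} \le C\norm{f}$. This is false for general $f \in W^\ell = L^2$: the exact solution satisfies $d\sigma = dd^*u = P_{\frB}f$, i.e.\ $d\sigma$ is the $\frB$-component of the Hodge decomposition of $f$, which for $f$ merely in $L^2$ has no Sobolev smoothness whatsoever ($d\sigma \in V^\ell\cap\ker d$, but $d^*d\sigma$ involves third derivatives of $u$ and is uncontrolled, so $s$-regularity gives nothing). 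Consequently two of the six terms in \eqref{eta} cannot be made $O(h^s)$: the term $\inf_{\tau \in V^{\ell-1,c}_h}\norm{\sigma-\tau}_V$ contains $\norm{d\sigma - d\tau}$, and the moment term $\inf_{w \in M^{\ell}_h}\norm{d\sigma - w}$ is a best approximation of $d\sigma$ itself; both are bounded but carry no rate. (Your bounds for $u$, $du$, $p$, the harmonic-gap term $\norm{P_{\frH^c_h}u}$, and the $M^{\ell-1}_h$ term for $\sigma$ are fine.) This is not a fixable technicality of your write-up: it is the structural reason why, already in the conforming theory, the refined $O(h^s)$ estimate of \cite[Th.~7.10]{Arnold.Falk.Winther.2006.anum} is not a corollary of the basic quasi-optimality result but requires a duality (Aubin--Nitsche type) argument.

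The paper's proof avoids $\eta(x)$ entirely. It compares the CONGA solution with the \emph{conforming} discrete solution, whose refined estimate \eqref{refest_c} is already known: the subsystem characterizations \eqref{HL_mixed_h_B*}--\eqref{p_uH} show that $u_{\frB^*} = u^c_{\frB^*}$, $p_h = p^c_h$ and $dP_hu_h = du^c_h$, so after the jump bound $\norm{(I-P_h)u_h}\le \alpha_h^{-1}\norm{f} \le Ch^s\norm{f}$ only the errors in $\sigma_h$ and $u_\frB$ remain. These are handled by rerunning the duality argument of \cite[Lem.~7.7]{Arnold.Falk.Winther.2006.anum} in the broken setting: splitting $\sigma = \sigma^1+\sigma^2$ with $d\sigma^2 = P_{\frB_h}f$, exploiting the orthogonality $\sprod{\sigma-\sigma_h}{\pi_h\sigma^2 - P_h\sigma_h}=0$ together with \eqref{pi_approx}--\eqref{PP*_approx}, and then recovering $u_\frB$ via the discrete Poincar\'e inequality. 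If you want to prove the theorem, you will need to adopt some version of this duality mechanism; the penalization hypothesis \eqref{alphacv} only disposes of the nonconforming jump, not of the low-regularity obstruction in $d\sigma$.
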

\begin{remark}
  This refined error estimate (and the proof below) also apply to the
  filtered source problem \eqref{HLh_b_form}.
\end{remark}
\begin{proof}[Proof.~]
  To bound the error we will use \eqref{refest_c} and estimate the quantity
  \begin{equation*} 
    \norm{\sigma^c_h-\sigma_h} + \norm{u^c_h-u_h} + \norm{du^c_h-dP_hu_h} + \norm{p^c_h-p_h}.
  \end{equation*}  
  We start by decomposing the solution as in \eqref{uh_dec}, 
  and we use \eqref{HL_mixed_h_B*} and \eqref{p_uH}
  (valid for both the filtered and the unfiltered
  problems, see Remark~\ref{rem:unfiltered}):
  this yields $p_h = p^c_h$, as well as
  $$
  P_h u_h - u_\frB = u_\frH + u_{\frB^*}
  = u^c_\frH + u^c_{\frB^*} = u^c_h - u^c_\frB,
  $$
  that is, $u^c_h - P_h u_h = u^c_\frB-u_\frB$.
  This readily gives
  $du^c_h - dP_h u_h = 0$,
  so that we have
  \begin{equation}\label{prerefest}
  \norm{u^c_h - u_h} + \norm{du^c_h -dP_hu_h} + \norm{p^c_h-p_h}
  \le
  \norm{u^c_\frB - u_\frB} + \norm{(I-P_h)u_h}.
  \end{equation}
  For the latter term we use the last bound in \eqref{jbound_fil} (and Remark~\ref{rem:unfiltered}),
  together with the penalization scaling \eqref{alphacv}: this yields
  \begin{equation} \label{jbound}
    \norm{(I-P_h)u_h} \le \alpha_h^{-1} \norm{f} \le C h^s \norm{f}.
  \end{equation}
  Hence, it remains to study the errors in $u_\frB$ and $\sigma_h$. 
  For this we remind systems \eqref{HL_mixed_hc_B} and \eqref{HL_mixed_h_B}
  which correspond to test functions $v$ in $\frB^{\ell,c}_h = \frB^\ell_h$: they read
  \begin{equation} \label{sys_B} 
    \left\{\begin{aligned}
     \sprod{\sigma^c_h}{\tau^c} &= \sprod{d\tau^c}{u^c_\frB} 
     \\
     \sprod{d \sigma^c_h}{v} &= \sprod{f}{v} 
    \end{aligned} \right.
   \qquad \text{ and } \qquad
     \left\{\begin{aligned}
      \sprod{\sigma_h}{\tau} - \sprod{dP_h \tau}{u_h} &= 0 
      \\
      \sprod{dP_h \sigma_h}{v} &= \sprod{f}{v}  
    \end{aligned} \right.
   \end{equation}
   for all $\tau^c \in V^{\ell-1,c}_h$, $\tau \in V^{\ell-1}_h$ and all $v \in \frB^{\ell,c}_h$.
   These systems characterize
   $\sigma^c_h \in \frB^{*,c}_{\ell,h}$ 
   by the relation 
   $d \sigma^c_h = Q_{\frB_h} f$ 
   and similarly 
   $\sigma_h \in \frB^{*}_{\ell,h}$ 
   by the relation 
     $dP_h \sigma_h = Q_{\frB_h} f$.
   This shows in particular that $dP_h(\sigma^c_h-\sigma_h) = 0$,
   so that using 
   $\tau^c = \sigma^c_h-P_h \sigma_h$ in the conforming system yields
   \begin{equation} \label{Psh_sc}
     \sprod{\sigma^c_h}{\sigma^c_h-P_h\sigma_h} = 0,
   \end{equation}
   while taking 
   $\tau = \sigma^c_h-\sigma_h$ in the nonconforming one
   gives
   \begin{equation} \label{sh_sc}
     \sprod{\sigma_h}{\sigma^c_h-\sigma_h} = 0.
   \end{equation}
   We then use successively \eqref{sh_sc} and \eqref{Psh_sc} to compute
   \begin{equation*}
     \begin{aligned}
       \norm{\sigma^c_h-\sigma_h}^2 
       &= \sprod{\sigma^c_h}{\sigma^c_h-\sigma_h}
       = \sprod{\sigma^c_h}{(P_h-I)\sigma_h}
       \\
       &= \sprod{\sigma}{(P_h-I)\sigma_h} + \sprod{\sigma^c_h-\sigma}{(P_h-I)\sigma_h}
       \\
       &= \sprod{\sigma}{(P_h-I)(\sigma_h-\sigma^c_h)} + \sprod{\sigma^c_h-\sigma}{(P_h-I)(\sigma_h-\sigma^c_h)}
     \end{aligned}    
   \end{equation*}
   where the last step uses $(P_h-I)\sigma^c_h = 0$ which follows from $\sigma^c_h \in V^{\ell,c}_h$.
   We handle the second term by using 
   the stability of $P_h$ and the estimate \eqref{refest_c}: 
   \begin{equation*}
     \sprod{\sigma^c_h-\sigma}{(P_h-I)(\sigma_h-\sigma^c_h)} 
     \le \norm{\sigma^c_h-\sigma}\norm{(P_h-I)(\sigma_h-\sigma^c_h)}
     \le C h^s \norm{f}\norm{\sigma_h-\sigma^c_h}.
   \end{equation*}
   We next use our assumption that $P_h$ can be extended by $\bar P_h$ on $W^\ell$, with adjoint $\bar P^*_h$:
   this allows us to write
   \begin{equation*}
     \sprod{\sigma}{(P_h-I)(\sigma_h-\sigma^c_h)} 
     = \sprod{(\bar P^*_h-I)\sigma}{\sigma_h-\sigma^c_h} 
     \le \norm{(\bar P^*_h-I)\sigma}\norm{\sigma_h-\sigma^c_h}.
   \end{equation*}
   Since $\Omega$ is $s$-regular, 
   \cite[Eq.~(7.29)]{Arnold.Falk.Winther.2006.anum} gives us an a priori bound
   $\norm{\sigma}_{H^s(\Omega)} \le C \norm{f}$,
   so that \eqref{PP*_approx} yields
   \begin{equation} \label{sigma_approx_2}
     \norm{(I-\bar P_h^*)\sigma} \le C h^s \norm{f}.
   \end{equation}
   Gathering the above bounds yields
     $\norm{\sigma^c_h-\sigma_h}^2 \le Ch^s\norm{f} \norm{\sigma_h-\sigma^c_h}$,
   which leads to the desired estimate for the $\sigma$ error, namely
   \begin{equation} \label{est-s}
     \norm{\sigma^c_h-\sigma_h} \le Ch^s\norm{f}.
   \end{equation}
  Turning to the error in $u_\frB$, we now take $\tau = \tau^c \in V^{\ell-1,c}_h$ in \eqref{sys_B}: this yields
  \begin{equation} \label{sch-sh}  
    \sprod{\sigma^c_h - \sigma_h}{\tau^c} = \sprod{d\tau^c}{u^c_\frB - u_h} = \sprod{d\tau^c}{u^c_\frB - Q_{\frB_h} u_h} 
      \quad \forall \, \tau^c \in V^{\ell-1,c}_h
  \end{equation}
  where the second equation follows from the fact that $d\tau^c \in \frB_h$.
  We then let $\tau^c \in \frB^{*,c}_{\ell-1,h}$ be such that
  $d\tau^c = u^c_\frB - Q_{\frB_h}u_h \in \frB^{\ell,c}_h$.
  This allows us to compute
  \begin{equation*}
    \begin{aligned}
      \norm{ u^c_\frB - Q_{\frB_h} u_h}^2
      &= \sprod{d\tau^c}{u^c_\frB - Q_{\frB_h} u_h}
      = \sprod{\sigma^c_h-\sigma_h}{\tau^c}
      \le \norm{\sigma^c_h-\sigma_h}\norm{\tau^c} 
      \\
      &\le \cph \norm{\sigma^c_h-\sigma_h}\norm{d\tau^c} = \cph \norm{\sigma^c_h-\sigma_h}\norm{u^c_\frB - Q_{\frB_h} u_h}
    \end{aligned}
  \end{equation*}
  where the second equality is \eqref{sch-sh} and the second bound 
  is the discrete Poincaré inequality \eqref{pcr_hc}.
  Using Assumption~\ref{as:bcp} and the bound \eqref{est-s}, this gives
  \begin{equation} \label{bound_uB2}
    \norm{ u^c_\frB - Q_{\frB_h} u_h} \le C\norm{\sigma^c_h-\sigma_h} \le  Ch^s\norm{f}.
  \end{equation}
  Writing next  $u_{\frB} = Q_{\frB_h} P_h u_h = Q_{\frB_h} u_h + Q_{\frB_h} (P_h-I) u_h$, we then estimate
  \begin{equation} \label{bound_uB}
  \norm{u^c_{\frB} - u_{\frB}} \le \norm{u^c_\frB - Q_{\frB_h} u_h} + \norm{Q_{\frB_h}(I-P_h)u_h}
  \le Ch^s\norm{f} 
  \end{equation}
  where the second inequality follows from \eqref{bound_uB2}, \eqref{jbound} and the fact that 
  $Q_{\frB_h}$ is stable (actually of unit norm) in $W^\ell$. 
  The desired result follows by gathering estimates \eqref{refest_c}, \eqref{prerefest}, \eqref{jbound}, \eqref{est-s} and \eqref{bound_uB}.

\end{proof}

As a consequence of the above result, we find that the solution operator $K_h$
for the unfiltered source problem converges in operator norm towards $K$,
\begin{equation} \label{conv_Kh}
  \norm{K - K_h}_{\cL(L^2, L^2)} \le C h^s.
\end{equation}
Invoking the same arguments as above this leads to the following result.

\begin{theorem} \label{th:spectral}
  Under the same assumptions as Th.~\ref{th:refest}, the discrete eigenvalue
  problem \eqref{HL_eig_h} converges to the continuous problem \eqref{HL_eig} in the sense
  of \cite[Def.~2.1]{Boffi.2006.csd}.
\end{theorem}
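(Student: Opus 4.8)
The plan is to reduce the spectral convergence of the discrete eigenvalue problem to the operator-norm convergence of the solution operators, which is exactly the abstract criterion recalled in the excerpt. The key observation is that Theorem~\ref{th:refest} has already done the analytic heavy lifting: it establishes the refined $L^2$ estimate \eqref{refest} with the rate $h^s$ under the assumptions on $s$-regularity, the approximation properties \eqref{pi_approx}--\eqref{PP*_approx}, and the penalization regime \eqref{alphacv}. First I would observe that the solution operators $K:f\mapsto u+p$ and $K_h:f\mapsto u_h+p_h$ are both bounded linear operators on $L^2=W^\ell$, the boundedness of $K_h$ following from the well-posedness estimate \eqref{ustab_h} (or its unfiltered analog) and that of $K$ from the continuous inf-sup bound \eqref{infsup}.

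Next I would derive the operator-norm convergence \eqref{conv_Kh} directly from \eqref{refest}. For an arbitrary source $f\in L^2$ with $\norm{f}=1$, the difference $(K-K_h)f = (u+p)-(u_h+p_h)$ is controlled by the triangle inequality:
\begin{equation*}
  \norm{(K-K_h)f} \le \norm{u-u_h} + \norm{p-p_h} \le C h^s \norm{f},
\end{equation*}
where the last inequality is precisely a consequence of \eqref{refest}. Taking the supremum over such $f$ yields $\norm{K-K_h}_{\cL(L^2,L^2)} \le C h^s$, which is \eqref{conv_Kh}. This mirrors exactly the passage from \eqref{refest_c} to \eqref{conv_Khc} in the conforming case, so the only thing to check is that the two relevant components $u$ and $p$ of the solution appear with the right rate in \eqref{refest}, which they do.

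Finally I would invoke the abstract spectral-correctness theory. As recalled in the excerpt (see \cite{Boffi.Brezzi.Gastaldi.2000.mcomp} or \cite[Sec.~8.3]{Arnold.Falk.Winther.2006.anum}), operator-norm convergence $\norm{K-K_h}_{\cL(L^2,L^2)}\to 0$ of the discrete solution operators toward the compact continuous one is a necessary and sufficient condition for the convergence of the discrete eigenvalue problem in the sense of \cite[Def.~2.1]{Boffi.2006.csd}. Since the eigenpairs of $L^\ell$ (resp. $L^\ell_h$) correspond to the reciprocals of the nonzero eigenvalues of $K$ (resp. $K_h$) via the standard correspondence between the source and eigenvalue problems, and the harmonic kernel is reproduced exactly at the discrete level by Lemma~\ref{lem:HL}, the convergence \eqref{conv_Kh} transfers to the eigenvalue problem \eqref{HL_eig_h}$\to$\eqref{HL_eig}. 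I expect the main subtlety to be purely bookkeeping rather than analytic: one must confirm that the self-adjointness and the kernel structure established in Lemma~\ref{lem:HL} place the problem squarely within the framework of \cite{Boffi.2006.csd}, so that no separate compactness or discrete-compactness argument is needed beyond the norm convergence already in hand. Given that the hard estimate is already proven, this last step is a direct citation of the abstract theory.
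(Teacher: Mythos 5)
Your proposal is correct and follows essentially the same route as the paper: the paper likewise deduces the operator-norm bound \eqref{conv_Kh} directly from the refined estimate \eqref{refest} of Theorem~\ref{th:refest} applied to the unfiltered source problem, and then invokes the same abstract criterion (norm convergence of the solution operators being necessary and sufficient, per \cite{Boffi.Brezzi.Gastaldi.2000.mcomp} and \cite[Sec.~8.3]{Arnold.Falk.Winther.2006.anum}) to conclude convergence in the sense of \cite[Def.~2.1]{Boffi.2006.csd}. Your additional bookkeeping remarks on boundedness of $K$, $K_h$ and the kernel structure from Lemma~\ref{lem:HL} are consistent with, though not spelled out in, the paper's brief argument.
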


We remind that this result guarantees that all the exact eigenvalues
with their respective eigenspaces are well approximated as $h \to 0$,
which in particular means that the exact multiplicies are preserved at convergence,
and that the discrete operator is free of spurious eigenvalues.

\section{Application to polynomial finite elements}
\label{sec:cartfem}

In this section we describe how the above theory applies to tensor-product polynomial finite elements 
on a 2D Cartesian domain $\Omega = ]0,a[ \times ]0,a[$. 
Application to unstructured elements is also possible following the same lines as in 
\cite{Campos-Pinto.Sonnendrucker.2017a.jcm,Campos-Pinto.Sonnendrucker.2017b.jcm} 
and we refer to \cite{conga_psydac} for an application to mapped spline elements involving 
multiple patches on complex domains, where the CONGA operators are used to approximate 
various electromagnetic problems.

\subsection{Tensor-product local spaces}
\label{sec:tensor}

We partition the domain $\Omega$ into a collection of Cartesian cells
$\Omega_{\bk} = ]h (k_1-1), h k_1[ \, \times \, ]h (k_2-1), h k_2[$ of step size $h = a/K$
with $k_1, k_2 = 1, \dots, K$,
and on each cell we consider a local de Rham sequence 
of tensor-product polynomial spaces, of the form
\begin{equation} \label{derham_loc}
  \VV^0(\Omega_\bk) \xrightarrow{ \mbox{$~ \bgrad ~$}}
    \VV^1(\Omega_\bk) \xrightarrow{ \mbox{$~ \curl ~$}}
       \VV^2(\Omega_\bk)
\end{equation}
with $\curl v = \partial_1 v_2 - \partial_2 v_1$ the scalar curl in 2D, and local spaces defined as
\begin{equation*}
\VV^0(\Omega_\bk) = \QQ_{p,p}(\Omega_\bk)
, \quad  
\VV^1(\Omega_\bk) = \begin{pmatrix} \QQ_{p-1,p}(\Omega_\bk) \\ \QQ_{p,p-1}(\Omega_\bk) \end{pmatrix}
, \quad 
\VV^2(\Omega_\bk) = \QQ_{p-1,p-1}(\Omega_\bk)
\end{equation*}
where
$
\QQ_{p_1,p_2} := \Span \big\{ x_1^{r_1} x_2^{r_2} : r_d = 0, \dots, p_d \big\}
$.
The global broken spaces are then defined as the Cartesian product of the local spaces 
\begin{equation} \label{Vh}
V^{\ell}_h := \VV^\ell(\Omega_{(1,1)}) \times \cdots \times \VV^\ell(\Omega_{(K,K)})
\end{equation}
which can also be seen as a sum of spaces if one extends the local spaces by 0 outside of their cell.
For the global conforming spaces we consider
$
V^{\ell,c}_h := V^{\ell}_h \cap V^\ell
$
with $V^\ell$ defined as the usual Hilbert spaces of the 2D grad-curl de Rham sequence
with homogeneous boundary conditions, namely
\begin{equation} \label{derham_2D}
V^{0} = H^1_0(\Omega) \xrightarrow{ \mbox{$~ \bgrad ~$}}
  V^{1} = H_0(\curl;\Omega) \xrightarrow{ \mbox{$~ \curl ~$}}
      V^{2} = L^2(\Omega).
\end{equation}
Here the $W^\ell$ spaces are 
\begin{equation} \label{Well_2D}
W^{0} = L^2(\Omega)~,
  \qquad W^{1} = L^2(\Omega)^2~,
      \qquad W^{2} = L^2(\Omega)
\end{equation}
and the dual sequence \eqref{seqdual} is devoid of boundary conditions: it reads
\begin{equation} \label{derham_dual}
V^{*}_0 = L^2(\Omega) \xleftarrow{ \mbox{$~ -\Div ~$}}
  V^*_{1} = H(\Div;\Omega) \xleftarrow{ \mbox{$~ \bcurl ~$}}
      V^*_{2} = H(\bcurl;\Omega)
\end{equation}
where $\bcurl v = (\partial_2 v, -\partial_1 v)$ is the vector-valued curl operator in 2D.

\subsection{Geometric degrees of freedom}
\label{sec:geodofs}

On each local space we consider degrees of freedom corresponding to the
interpolation / histopolation approach of~\cite{Robidoux.2008.histo, %
Gerritsma.2011.spec, kreeft2011mimetic} and also used in
recent plasma-related applications~\cite{CPKS_variational}.
We equip every interval $I_k = ]h(k-1), hk[$ with a Gauss-Lobatto grid 
$
h(k-1) = \zeta_{k,0} < \cdots < \zeta_{k,p} = hk,
$
leading to subgrids of the cells $\Omega_\bk$ 
made of nodes, (small) edges and subcells:
\begin{equation} \label{subgrid}
  \left\{ \begin{aligned}
    &\ttn_{\bk,\bi} := (\zeta_{k_1, i_1}, \zeta_{k_2, i_2})
    \quad &&\text{ for }
    (\bk, \bi) \in \cM^0_h 
    \\
    &\tte_{\bk,d,\bi} := [\ttn_{\bk,\bi}, \ttn_{\bk,\bi+\uvec_d}]
    \quad &&\text{ for }
    (\bk, d, \bi) \in \cM^1_h
    \\
    &\ttc_{\bk,\bi} := \tte_{\bk,1,\bi} \times \tte_{\bk,2,\bi}
    \quad &&\text{ for }
    (\bk, \bi) \in \cM^2_h~.
  \end{aligned} \right.
\end{equation}
Here the square brackets $[\cdot]$ denote a convex hull, $\uvec_d$ is the canonical
basis vector of $\RR^2$ along dimension $d \in \{1,2\}$, and we have used the multi-index sets 
\begin{equation*}
  \left\{ \begin{aligned}
  & \cM^0_h := \{(\bk, \bi) : \bk \in \range{1}{K}^2, \bi \in \range{0}{p}^2 \}
  \\
  & \cM^1_h := \{(\bk, d, \bi) : \bk \in \range{1}{K}^2, d \in \{1,2\}, \bi \in \range{0}{p}^2, i_d < p\}
  \\
  & \cM^2_h := \{(\bk, \bi) : \bk \in \range{1}{K}^2, \bi \in \range{0}{p-1}^2 \}.
  \end{aligned} \right.
\end{equation*}
On these geometrical elements we define pointwise and integral degrees of freedom,
\begin{equation} \label{geodofs}
    \left\{ \begin{aligned}
        &\sigma^{0}_{\bk,\bi}(\vp) :=
            (\vp|_{\Omega_\bk})(\ttn_{\bk,\bi})
        \quad &&\text{ for }
        (\bk, \bi) \in \cM^0_h
        \\
        &\sigma^{1}_{\bk,d,\bi}(\bv) :=
            \int_{\tte_{\bk,d,\bi}}\uvec_d \cdot (\bv|_{\Omega_\bk})
        \quad &&\text{ for }
        (\bk, d, \bi) \in \cM^1_h
        \\
        &\sigma^{2}_{\bk,\bi}(\rho) := \int_{\ttc_{\bk,\bi}}\rho|_{\Omega_\bk}
        \quad &&\text{ for }
        (\bk, \bi) \in \cM^2_h~.
    \end{aligned} \right.
\end{equation}

\subsection{Broken basis functions}

The local spaces are then spanned with tensor-products of univariate
polynomials: on an arbitrary interval we let
$
\phi_{k,i} \in \PP_p(I_k) 
$
be the interpolation (Lagrange) polynomials associated with the Gauss-Lobatto nodes, 
\begin{equation*}
\phi_{k,i}(\zeta_{k,j}) = \delta_{i,j} , \quad i,j = 0, \dots, p,
\end{equation*}
and we let
$
\psi_{k,i} \in \PP_{p-1}(I_k) 
$
be the {\em histopolation} polynomials associated with the Gauss-Lobatto sub-intervals, 
which are characterized by the relations
\begin{equation*}
\int_{\zeta_{k,j}}^{\zeta_{k,j+1}} \psi_{k,i} = \delta_{i,j} , \quad i,j = 0, \dots, p-1.
\end{equation*}
The local tensor-product basis functions
are then defined as 
\begin{equation} \label{bf}
\left\{\begin{aligned}
&\Lambda^0_{\bk,\bi}(\bx) := \phi_{k_1,i_1}(x_1) \phi_{k_2,i_2}(x_2) 
    \quad && \text{ for }
    (\bk, \bi) \in \cM^0_h
\\
&\Lambda^1_{\bk,d,\bi}(\bx) := \uvec_d \psi_{k_d,i_d}(x_d)
    \phi_{k_{d'},i_{d'}}(x_{d'}) 
    \quad && \text{ for }
    (\bk, d, \bi) \in \cM^1_h, ~ d' = 3-d
\\
&\Lambda^2_{\bk,\bi}(\bx) := \psi_{k_1,i_1}(x_1) \psi_{k_2,i_2}(x_2)
  \quad && \text{ for }
  (\bk, \bi) \in \cM^2_h.
\end{aligned}\right.
\end{equation}
As such, they provide a basis for the respective broken spaces $V^\ell_h$
which is dual to the degrees of freedom~\eqref{geodofs}, i.e.
$
\sigma^\ell_{\bsm}\bigl(\Lambda^\ell_{\bn}\bigr) = \delta_{\bsm, \bn}\ \forall \bsm, \bn \in \cM_h^\ell
$\
for $\ell = 0, 1, 2$.
They also allow the derivation of simple expressions for the
conforming projection operators~$P^\ell_h$.

\subsection{Conforming projection} 

In this Cartesian setting, it is easy to verify that
a function $v$ in the space $V^\ell_h$
belongs to the conforming subspace $V^{\ell,c}_h$
if it possesses the proper continuity across cell interfaces, namely
if it is continuous for $\ell = 0$, and if its tangential traces are continuous for $\ell=1$.
Given the form of the geometric degrees of freedom \eqref{geodofs}, these continuity conditions
may be expressed by the equality of the coefficients associated with a single geometrical element.
Specifically, let us denote by $\ttg^\ell_\mu$ the geometrical element of dimension $\ell$
associated with the multi-index $\mu \in \cM^\ell_h$.
Then $v = \sum_{\mu \in \cM^\ell_h} v_\mu \Lambda^\ell_\mu$ belongs to $V^{\ell,c}_h$
if the continuity conditions are satisfied, i.e.,
\begin{equation*}
  v_\mu = v_\nu
  \qquad \text{ for all $\mu, \nu$ 
    such that $\ttg^\ell_{\mu} = \ttg^\ell_{\nu}$}
\end{equation*}
and the homogeneous boundary conditions are satisfied if in addition we have
\begin{equation*}
  v_\mu = 0
  \qquad \text{ for all $\mu$ 
  such that $\ttg^\ell_{\mu} \in \partial \Omega$}.
\end{equation*}
In particular, a natural basis for the conforming space $V^{\ell,c}_h$ is obtained by stitching
together the broken basis functions associated with a single interior geometrical element,
and by discarding the boundary ones. Gathering the former in the sets
$ 
  \ttG^\ell_h := \{\ttg^\ell_\mu : \mu \in \cM^\ell_h, \ttg^\ell_\mu \not\in \partial \Omega\},
$ 
the resulting conforming basis functions read
\begin{equation} \label{bf_c}
\Lambda^{\ell,c}_\ttg := \sum_{\mu \in \cM^\ell_h(\ttg)} \Lambda^{\ell}_\mu \quad \text{ for } ~ \ttg \in \ttG^\ell_{h}
\end{equation}
where
$ 
  \cM^\ell_h(\ttg) := \{\mu \in \cM^\ell_h : \ttg^\ell_\mu = \ttg\}
$ 
denote the multi-indices associated with a given geometrical element.
A simple conforming projection then consists of an averaging
\begin{equation} \label{Pell}
P^\ell_h \Lambda^\ell_\nu :=
\frac{1}{\#\cM^\ell_h(\ttg^\ell_\nu)} \sum_{\mu \in \cM^\ell_h(\ttg^\ell_\nu)}  \Lambda^{\ell}_{\mu}.
\end{equation}
It is easily verified that this defines a projection $P^\ell_h: V^\ell_h \to V^\ell_h$ 
onto the conforming subspace $V^{\ell,c}_h \subset V^\ell_h$, 
with matrix entries (assuming some implicit numbering of the degrees of freedom)
\begin{equation*}
\matP^\ell_{\mu,\nu}
:= \sigma^{\ell}_\mu(P^\ell_h \Lambda^\ell_\nu)
= \begin{cases}
  \big(\#\cM^\ell_h(\ttg^\ell_\nu)\big)^{-1} \quad &\text{ if } \ttg^\ell_\mu = \ttg^\ell_\nu
  \\
  0 \quad &\text{ otherwise }
\end{cases},
\qquad \text{ for } \mu, \nu \in \cM^\ell_h.
\end{equation*}
Denoting by
\begin{equation} \label{matM}
  \matM^\ell := \Big(\sprod{\Lambda^{\ell}_\mu}{\Lambda^{\ell}_\nu})\Big)_{\mu,\nu \in \cM^\ell_h}
\end{equation}
the mass matrix in the broken basis of $V^\ell_h$
(remind that $\sprod{\cdot}{\cdot}$ is the $L^2$ scalar product in the 
proper $W^\ell$ space, see \eqref{Well_2D}), we then find that 
the matrix of the adjoint projection $(P^\ell_h)^* : V^\ell_h \to V^\ell_h$ takes the form
\begin{equation} \label{matP*}
\Big(\sigma^{\ell}_\mu((P^\ell_h)^* \Lambda^\ell_\nu)\Big)_{\mu \in \cM^{\ell}_h, \nu \in \cM^{\ell+1}_h} = 
(\matM^{\ell})^{-1} (\matP^\ell)^T \matM^{\ell}.
\end{equation}

An attracting feature of Gauss-Lobatto nodes is that this simple averaging procedure 
automatically yields moment preservation.
\begin{lemma}
    The conforming projection defined by \eqref{Pell} preserves polynomial moments of degree $p-1$
    with homogeneous boundary conditions. Namely, for all $v \in V^\ell_h$ it holds
    \begin{equation} \label{mp}
      \sprod{P^\ell_h v }{\psi} = \sprod{v}{\psi}  \qquad \text{for all } \psi \in \QQ_{p-1,p-1}(\Omega) \cap V^\ell.
    \end{equation}
\end{lemma}
\begin{remark}
  If the spaces $V^\ell$ (and the conforming subspaces $V^{\ell,c}_h$) are defined without boundary conditions, then the result
  \eqref{mp} holds in the same form.
\end{remark}

\begin{proof}
  Given the tensor-product structure, it is enough to verify that the property holds in the univariate
  case ($\Omega = ]0,a[$) where the only nontrivial projection is $P^0_h$, defined on $V^0_h$ the space of 
  piecewise polynomials of degree $p$, onto its continuous subspace with boundary condition $V^{0,c}_h = V^0_h \cap H^1_0(\Omega)$.
  Thus, for $v \in V^0_h$ written in the form $v= \sum_{k,i} v_{k,i} \phi_{k,i}$
  and $\psi$ a polynomial of degree $p-1$, we observe that the Gauss-Lobatto quadrature formulas
  are exact on any cell:
  \begin{equation} \label{GL}
    \int_{\Omega_k} v \psi 
      = \sum_{i,j=0}^p \omega_{j} v_{k,i} \phi_{k,i}(\zeta_{k,j}) \psi(\zeta_{k,j}) 
      = \sum_{i=0}^p \omega_{i} v_{k,i} \psi(\zeta_{k,i}).
  \end{equation}
  From \eqref{Pell} we have
  $(P^0_h v)_{k,p} = (P^0_h v)_{k+1,0} := \frac 12 (v_{k,p}+v_{k+1,0})$ 
  for every cell vertex $\zeta_{k,p} = hk$ with $1 \le k < K$, 
  $(P^0_h v)_{k,i} := 0$ on boundary vertices $\zeta_{k,i} \in \partial\Omega$ 
  and $(P^0_h v)_{k,i} := v_{k,i}$ on every other node.
  Applying \eqref{GL} to $v \leftarrow P^0_h v-v$, using the weights symmetry $\omega_p = \omega_0$ 
  and the boundary condition $\psi = 0$ on $\partial \Omega$, we thus find
  $$
  \int_{\Omega} (P^0_h v -v) \psi 
  = \sum_{1 \le k < K} \omega_0 \big((P^0_h v)_{k,p} - v_{k,p} + (P^0_h v)_{k+1,0} - v_{k+1,0}\big) \psi(hk) = 0,
  $$
  hence the claim. 
\end{proof}  

According to the error analysis in section~\ref{analysis}, this moment-preserving property makes $P^\ell_h$ 
a good candidate for a CONGA scheme of order $p$. We do not know, however, if the conforming projections \eqref{Pell}
can be extended by $V$-stable projection operators $\bar P^\ell_h$ on $V^\ell$, so that we question as to 
whether our analysis applies here remains open for the time being.

\subsection{Differential operators in matrix form} \label{sec:DP_mat}

Using the broken degrees of freedom \eqref{geodofs} and basis functions \eqref{bf},
we let $\matD^\ell$ be the matrix of
the piecewise differential operator defined on each cell as
$d^\ell_{\rm pw}:= d^\ell: \VV^\ell(\Omega_\bk) \to \VV^{\ell+1}(\Omega_\bk)$, $\bk \in \range{1}{K}^2$,
\begin{equation} \label{matD}
  \matD^\ell_{\mu,\nu} := \sigma^{\ell+1}_\mu(d^\ell_{\rm pw} \Lambda^\ell_\nu)
  \qquad \text{ for } ~ \mu \in \cM^{\ell+1}_h, \nu \in \cM^\ell_h,
\end{equation}
(on conforming functions $v \in V^\ell$ this is just the usual differential $d^\ell$).
By construction, both $\matD^\ell$ and $\matM^\ell$ have a cell-diagonal structure
in the sense that they do not couple different cells.
Moreover, the geometric nature of the degrees of freedom \eqref{geodofs}
leads to differential matrices \eqref{matD} which are connectivity matrices,
i.e. they are composed of $0$ or $\pm 1$ entries
corresponding to the connectivity of the subgrids,
see e.g.~\cite{Gerritsma.2011.spec, kreeft2011mimetic, CPKS_variational}.
Observing that the piecewise and global differential operators coincide on conforming functions,
we find that the matrix of the CONGA differential operator
$d^\ell_h = d^\ell P^\ell_h = d^\ell_{\rm pw} P^\ell_h: V^{\ell}_h \to V^{\ell+1}_h$ reads
\begin{equation} \label{matDP}
  \Big(\sigma^{\ell+1}_\mu(d^\ell_h \Lambda^{\ell}_\nu)\Big)_{\mu \in \cM^{l+1}_h, \nu \in \cM^\ell_h}
  = \matD^\ell \matP^\ell~.
\end{equation}
This matrix is local (two entries can only be connected if they belong to adjacent cells) since 
$\matD^\ell$ and $\matP^\ell$ are respectively cell-diagonal and local (in the same sense).

It is a key feature of the broken FEEC discretization that not only the primal differential operators $d^\ell_h$ are local, but {\em also the dual ones} $d^*_{l+1,h} = (d^\ell_h)^*: V^{\ell+1}_h \to V^\ell_h$.
This is easily seen by writing their matrix,
\begin{equation} \label{matDP*}
  \Big(\sigma^{\ell}_\mu(d^*_{\ell+1,h} \Lambda^{\ell+1}_\nu)\Big)_{\mu \in \cM^{\ell}_h, \nu \in \cM^{\ell+1}_h}
  = (\matM^\ell)^{-1} (\matD^\ell \matP^\ell )^T \matM^{\ell+1}
\end{equation}
and by observing that its locality is the same as that of $\matD^\ell \matP^\ell$, 
since the broken mass matrices are cell-diagonal.
In addition, the dual commuting projection operators \eqref{tpih} are also local.

We emphasize that this is in general not the case with conforming finite elements: 
indeed the matrix of the primal (strong) differential operator \eqref{dlc},
\begin{equation} \label{matDc}
  \matD^{\ell,c} := \Big(
    \sigma^{\ell+1,c}_{\ttg'}(d^\ell \Lambda^\ell_{\ttg})
  \Big)_{\ttg' \in \ttG^{\ell+1}_h, \ttg \in \ttG^{\ell}_h}
\end{equation}
is also local, but this is no longer the case for that of the weak codifferential \eqref{wdlc}, which reads
\begin{equation} \label{matwDc}
  \Big(\sigma^{\ell,c}_{\ttg}(d^{*,c}_{\ell+1,h} \Lambda^{\ell+1,c}_{\ttg'})\Big)_{\ttg \in \ttG^{\ell}_h, \ttg' \in \ttG^{\ell+1}_h}
  = (\matM^{\ell,c})^{-1} (\matD^{\ell,c} )^T \matM^{\ell+1,c}.
\end{equation}
Here $\sigma^{\ell,c}_\ttg$, $\ttg \in \ttG^\ell_h$, 
denote degrees of freedom associated with the conforming basis functions \eqref{bf_c},
(for instance the geometric ones \eqref{geodofs} attached to a single cell $\Omega_k$ 
per geometrical element $\ttg$) and
$$
\matM^{\ell,c} = \Big(
  \sprod{\Lambda^{\ell,c}_\ttg}{\Lambda^{\ell,c}_{\ttg'}})
  \Big)_{\ttg,\ttg' \in \ttG^\ell_h}
$$
is the mass matrix in the conforming space. Since this matrix is local but not block-diagonal in general,
it has no local inverse, which results in \eqref{matwDc} being dense. 
We point out that for structured meshes or low order elements, lumping methods 
based on local quadrature rules do exists which allow one to 
derive local approximations of the inverse mass matrices, 
see e.g. \cite{Cohen_Monk_1998_nmpde,Egger_Radu_2021_sinum}, as well as 
local dual differential operators \cite{Lee_Winther_2018_mcomp,Lee_2022_m2an}.
However it is not clear yet how to extend these methods to high order elements on unstructured or curvilinear cells.
We summarize the above observations as follows.

\begin{theorem} \label{th:loc_bfeec}
  The primal and dual discrete differential operators are local in the sense that their matrices \eqref{matDP} and \eqref{matDP*} only couple degrees of freedom belonging to adjacent cells.
  The dual commuting projection operator \eqref{tpih} is also local, 
  in the sense that the values of $\tilde \pi^\ell_h f$ in a given cell only 
  depend on the values of $f$ in the adjacent cells.  
\end{theorem}

\begin{remark}
  These properties follow from the locality of the conforming projections
  and the broken nature of the discrete spaces. In particular they 
  would also hold on broken FEEC unstructured elements 
  with conforming projections designed following the method 
  of \cite{Campos-Pinto.Sonnendrucker.2016.mcomp}.
\end{remark}
  
\begin{proof}
  As discussed above the locality of the primal differential operators follow directly
  from that of the conforming projections, and that of the dual ones follow from 
  the cell-diagonal structure of the mass matrices. To show the locality of the 
  dual commuting projection, let us denote its coefficients vector by
  $\arr{f} := (\sigma^\ell_\mu(\tilde \pi^\ell_h f))_{\mu \in \cM^\ell_h}$,
  where $f \in W^\ell$ belongs to the proper $L^2$ space according to \eqref{Well_2D}. 
  Using \eqref{tpih_char} we find that
  \begin{equation} \label{arrf_tarrf}
  \arr{f} = (\matM^\ell)^{-1} (\matP^\ell)^T \tilde {\arr{f}}
  \quad \text{ where } \quad
    \tilde {\arr{f}} = (\sprod{\Lambda^\ell_\mu}{f})_{\mu \in \cM^{\ell}_h}.
  \end{equation}
  The result follows again from the fact that $\matM^\ell$ (and its inverse) are 
  cell-diagonal, and that $\matP^\ell$ only couples adjacent cells. 
\end{proof}

\subsection{Hodge Laplace problem in matrix form} \label{sec:HL_mat}

Denoting by $\Lambda^{\ell,\frH}_j$, 
$j = 1, \dots, N^{\ell,\frH}_h$ 
a basis of the discrete harmonic space $\frH^{\ell,c}_h$ 
(which in practice can be computed by solving the equation
$L^\ell_{h,\alpha}v = 0$ in the broken space $V^\ell_h$
for an arbitrary $\alpha > 0$, see Th.~\ref{th:HL_ker})
we may introduce the rectangular (reduced) mass matrix
\begin{equation} \label{mat_MH}
  \matM^{\ell,\frH}
    := \Big(\sprod{\Lambda^{\ell}_\mu}{\Lambda^{\ell,\frH}_j}\Big)_{%
    \mu \in \cM^{\ell}_h, j = 1, \dots, N^{\ell,\frH}_h}
\end{equation}
to describe the discrete harmonic fields.
Using the matrices described in the above sections and letting
\begin{equation} \label{mat_S}
  \matS^\ell := (\matI - \matP^\ell)^T \matM^{\ell} (\matI - \matP^\ell)
  = \Big(
    \sprod{(I-P^\ell_h)\Lambda^{\ell}_\mu}{(I-P^\ell_h)\Lambda^{\ell}_{\nu}})
    \Big)_{\mu,\nu \in \cM^\ell_h}
\end{equation}
denote the jump stabilization matrix in $V^\ell_h$, we then find that the
broken FEEC Hodge Laplacian source problem \eqref{HL_mixed_h}
takes the following matrix form:
\begin{equation} \label{HL_mixed_mat}
  \left\{
  \begin{aligned}
    \matM^{\ell-1} \arr{\sigma} - (\matD^{\ell-1}\matP^{\ell-1})^{T} \matM^{\ell} \arr{u} &= 0
    \\
    \matM^\ell \matD^{\ell-1}\matP^{\ell-1} \arr{\sigma}
    + \big(
    (\matD^\ell\matP^\ell)^T \matM^{\ell+1} (\matD^\ell\matP^\ell) 
      + \alpha_h \matS^\ell \big) \arr{u}  + (\matP^\ell)^T \matM^{\ell,\frH} \arr{p} 
        &= (\matP^\ell)^T \tilde {\arr{f}}
    \\
    (\matM^{\ell,\frH})^T \matP^\ell \arr{u} &= 0
  \end{aligned}~.
  \right.
\end{equation}
Here, $\arr{\sigma}$, $\arr{u}$ and $\arr{p}$ are the column arrays containing the coefficients of 
$\sigma_h$, $u_h$ and $p_h$ in the bases of $V^{\ell-1}_h$, $V^{\ell}_h$ and $\frH^{\ell,c}_h$ and $\tilde {\arr{f}}$ 
is the array containing the moments of $f$ against the basis functions of $V^\ell_h$,
as in \eqref{arrf_tarrf},
so that $(\matP^\ell)^T \tilde {\arr{f}}$ corresponds to the moments of the dual commuting projection $\tilde \pi^\ell_h f$.

In the case where the harmonic space $\frH^{\ell,c}_h$ is trivial, we note that the above problem
may be written in a simpler form
\begin{equation}\label{Lh_pbm_noH}
(\matM^{\ell} \matL^\ell + \alpha_h \matS^\ell) \arr{u} = (\matP^\ell)^T \tilde {\arr{f}}
\end{equation}
(completed with $\arr{\sigma} = (\matM^{\ell-1})^{-1}(\matD^{\ell-1}\matP^{\ell-1})^{T} \matM^{\ell} \arr{u}$
and $\arr{p} = 0$),
where 
\begin{equation}\label{Lh_mat}
    \matL^\ell := \matD^{\ell-1}\matP^{\ell-1} (\matM^{\ell-1})^{-1} (\matD^{\ell-1}\matP^{\ell-1})^{T} \matM^{\ell}
        + (\matM^{\ell})^{-1} (\matD^{\ell}\matP^{\ell})^T \matM^{\ell+1} \matD^{\ell}\matP^\ell
\end{equation}
is the matrix of the unpenalized Hodge Laplacian operator 
$L^\ell_{h} = d^{\ell-1}_h d^*_{\ell,h} + d^*_{\ell+1,h} d^\ell_h$. 
Equation \eqref{Lh_pbm_noH} corresponds to $L^\ell_{h,\alpha_h} u_h = \tilde \pi^\ell_h f$ 
tested against the basis functions of $V^\ell_h$, which according to our analysis 
is well-posed for $\alpha_h > 0$ in the absence of harmonic forms. 
Again, due to the cell-diagonal structure of the mass matrices, 
we observe that the problem matrix appearing the LHS of \eqref{Lh_pbm_noH} is local
in the sense of Theorem~\ref{th:loc_bfeec}.

\subsection{Numerical results}

In this section we assess the accuracy and robustness of the above discretization
for the Hodge Laplacian operator corresponding to the vector-valued case in 2D, i.e.
$L^1 = d^0 d^*_1 + d^*_2 d^1 = - \bgrad \Div + \bcurl \curl$.
Working with $\Omega = ]0,2\pi[^2$ and adopting the boundary conditions
from \eqref{derham_2D}--\eqref{derham_dual},
we find that the domain space \eqref{DL} is
\begin{equation} \label{DL1}
  D(L^1) = \{ v \in H_0(\curl;\Omega)  : \Div v \in H^1_0(\Omega), \curl v \in H(\bcurl;\Omega) \}.
\end{equation}
We first consider a Helmholtz-like source problem
\begin{equation} \label{Helmholtz_pbm}
  -\omega^2 u + L^1 u = f
\end{equation}
which corresponds to extending the plain Hodge Laplace problem \eqref{HL_f}
to the case of a sign-indefinite operator.
Here the source is smooth,
\begin{equation} \label{Helmholtz_f}
f(\bx) = \begin{pmatrix}
  - \sin(2x_2) \cos(x_1) \big((13 - \omega^2) \cos^2(x_1) - 6\big)
  \\ \noalign{\smallskip}
  \phantom{-}
    \sin(2x_1) \cos(x_2) \big((13 - \omega^2) \cos^2(x_2) - 6\big)
\end{pmatrix}
\qquad \text{ on } ~ \Omega = ]0,2\pi[^2
\end{equation}
and the parameter is taken as $\omega = 3.5$, so that $\omega^2$ is not an eigenvalue of the Hodge Laplacian operator.
As the domain is contractible there are no harmonic fields and the problem is well-posed. The exact solution is
\begin{equation*}
  u(\bx) = \begin{pmatrix}
  - \sin(2x_2) \cos^3(x_1)
  \\ \noalign{\smallskip}
  \phantom{-}
    \sin(2x_1) \cos^3(x_2)
\end{pmatrix} \in D(L^1).
\end{equation*}
In Figure~\ref{fig:HL_conv} we show the $L^2$ convergence curves corresponding to a
conforming approximation \eqref{Lh_c} of the Hodge Laplacian operator (left plot)
and a CONGA (broken FEEC) approximation \eqref{Lh} (right plot), using the polynomial elements
described in this section, with degrees $p=1$ to 4 as indicated.
We observe that the convergence rates of both methods are similar, although some reduction
in the accuracy can be seen for the low order CONGA solutions. (In the lowest order case
the convergence of the nonconforming scheme is not clear from this plot but on a finer mesh 
corresponding to $K = 160$ the accuracy improves with a rate of $1.56$, 
comparable to the rate of $1.8$ shown by the conforming solutions between the grids $K=40$ and $80$). 

Here the penalization parameter has been set to
\begin{equation} \label{strong_alpha}
\alpha_h = \frac{10 (p + 1)^2}{h}
\end{equation}
as motivated by \cite{Buffa.Perugia.Warburton.2009.jsc}.
For completeness we have also run the same problem with weaker penalization parameters
such as $\alpha_h = 1$ or even $0$
(in the latter case the broken Hodge Laplacian operator \eqref{Lh} has a large kernel but
the Helmholtz source problem \eqref{Helmholtz_pbm}--\eqref{Helmholtz_f} is still well-posed).
As expected these choices lead to larger jumps in the broken solution $u_h$, but by
measuring the conforming error $\norm{P_h u_h - u}$ we recover almost identical convergence
curves, which is a practical evidence of the robustness of the method with respect to the
penalization parameter.

\begin{figure} 
  \includegraphics[width=0.49\textwidth]{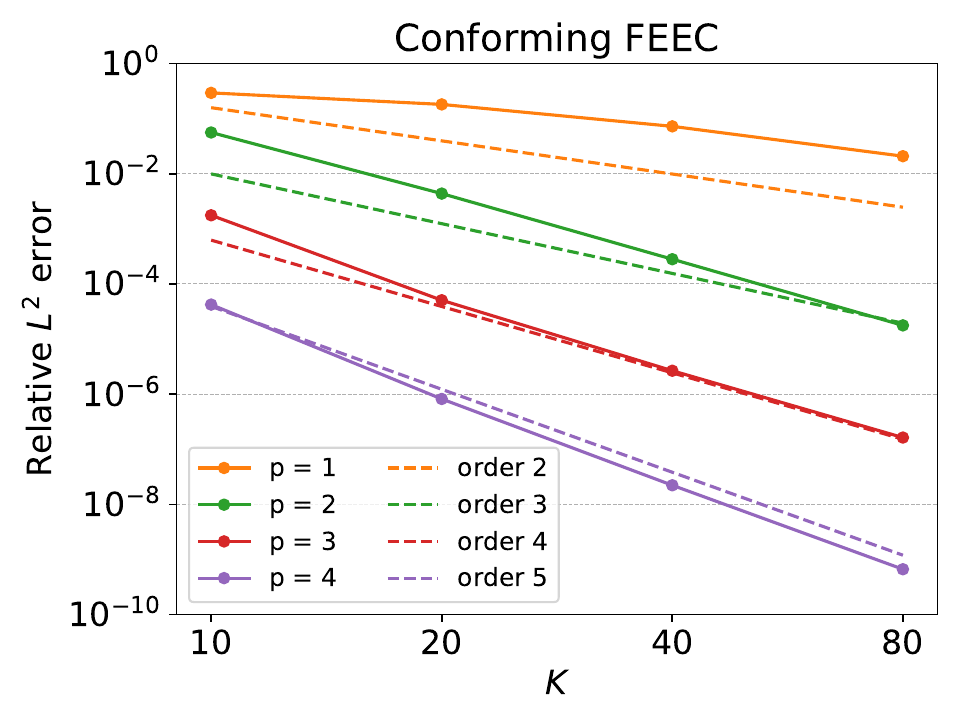}
  \hfill
  \includegraphics[width=0.49\textwidth]{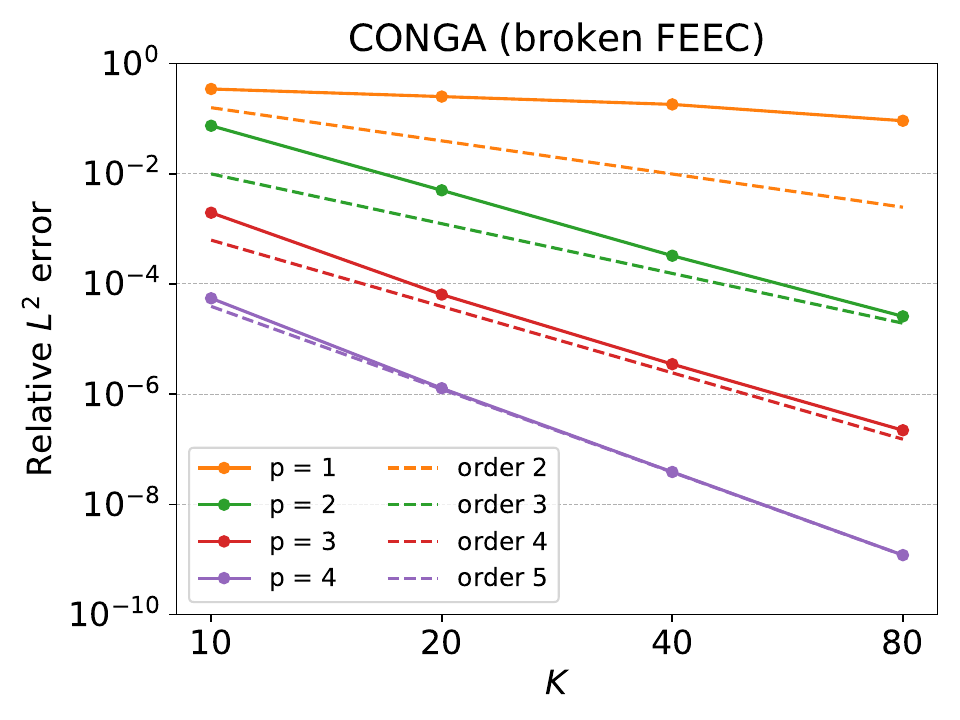}
  \caption{
  Convergence curves for the Hodge Laplace source problem, discretized with a conforming
 (left) and broken FEEC (right) method. For the latter, various penalization regimes
lead to similar curves, as described in the text.
 On the horizontal axis we report the number of cells $K$ along each direction, hence $h = 2\pi/K$; on the vertical axis we report the $L^2$ norm of the error, divided by the $L^2$ norm of the exact solution.
  }
  \label{fig:HL_conv}
\end{figure}

We next study 
the CONGA (broken FEEC) approximation of the
eigenproblem \eqref{HL_eig}.
On the square $\Omega = ]0,2\pi[^2$, the eigenvalues of the Hodge Laplacian operator
$L^1 = -\bgrad \Div + \bcurl \curl$ with $D(L^1)$ given by \eqref{DL1},
read $\lambda_\bn = \frac 14 (n_1^2+n_2^2)$ for $\bn \in \NN^2$, with separable eigenmodes of the form
\begin{equation*}
u_{1,\bn} = \begin{pmatrix} \cos\big(\frac{n_1 x_1}{2}\big) \sin\big(\frac{n_2x_2}{2}\big) \\ 0 \end{pmatrix}
\quad \text{ and } \quad
u_{2, \bn} = \begin{pmatrix} 0 \\ \sin\big(\frac{n_1 x_1}{2}\big) \cos\big(\frac{n_2x_2}{2}\big)\end{pmatrix}
\end{equation*}
where we must of course discard the zero fields, and in particular the index $\bn = (0,0)$.
Note that each eigenvalue comes with an even multiplicity, namely 2 if $n_1 = n_2$ or $n_1n_2 = 0$, or higher.
In Figure~\ref{fig:HL_eigs} we show the eigenvalues of the discrete CONGA operator \eqref{HL_eig_h}
corresponding to spaces of degree $p= 2$ and different mesh sizes. On the left panel we show
the first 40 eigenvalues of the operator associated with the penalization parameters \eqref{strong_alpha}.
The convergence there seems to be rapid, which is confirmed in Figure~\ref{fig:HL_eigserr} where
the eigenvalue errors $\abs{\lambda_\bn - \lambda_{\bn,h}}$ are plotted in logscale for the same parameters.
These results can be compared with the analog quantities corresponding to a penalization $\alpha_h = 1$,
shown on the right panels of Figure~\ref{fig:HL_eigs} and \ref{fig:HL_eigserr}.
For this weak penalization regime we see a clear issue: the first six eigenvalues do not
converge, but rather stagnate around a value close to 1.22 which is related with the jump
penalization operator $(I-P^*_h)(I-P_h)$. This spurious eigenvalue has actually a large multiplicity
(indeed the 40 eigenvalues shown here are those which are closer to the exact ones, but not
the smallest ones) and other runs have shown that it varies with the degree $p$.

Overall, these results provide a numerical validation of Theorems~\ref{th:est} and \ref{th:spectral}.
We finally show in Figure~\ref{fig:HL_eigsnope} the non-zero eigenvalues and associated 
eigenvalue errors obtained with the unpenalized operator corresponding to the limit value $\alpha_h = 0$.
As expected the unpenalized operator has a large (spurious) kernel, but by plotting 
the first 40 non-zero eigenvalues we obtain curves which are virtually on top of those 
of the strongly penalized case, seen in the left panels of Figure~\ref{fig:HL_eigs} 
and \ref{fig:HL_eigserr}. Thus, these numerical results suggest that the
nontrivial discrete spectrum converges towards the exact one with no need of a stabilization.
If true, this result would extend a similar one proven in \cite{Campos-Pinto.Sonnendrucker.2016.mcomp}
for the unpenalized CONGA approximation of the curl-curl eigenvalue problem. 
Although the unpenalized case is not covered by the analysis presented in this article, 
our explanation for this behaviour is that in the weakly penalized case the spurious eigenvalues are 
close to the correct ones, and their large multiplicity leads to a mixing of the spurious and correct eigenspaces.
In the unpenalized case the spurious eigenmodes essentially correspond to the kernel of $L_{h,0}$, hence they are 
clearly separated from the correct ones which are associated with positive eigenvalues 
$\lambda_h = (\norm{d^*_h u_h}^2+ \norm{d_h u_h}^2)/\norm{u_h}^2 > 0$.
In particular the respective eigenspaces are orthogonal, owing to the symmetry of the discrete Hodge Laplacian operator.

\begin{figure} 
    \includegraphics[width=0.47\textwidth]{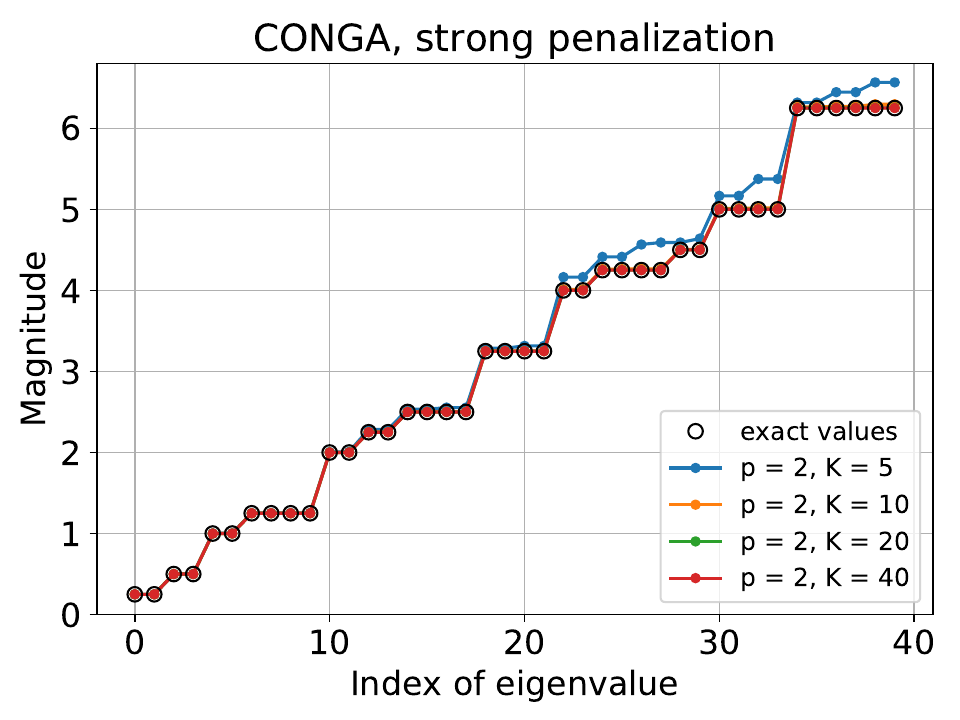}
    \hfill
    \includegraphics[width=0.47\textwidth]{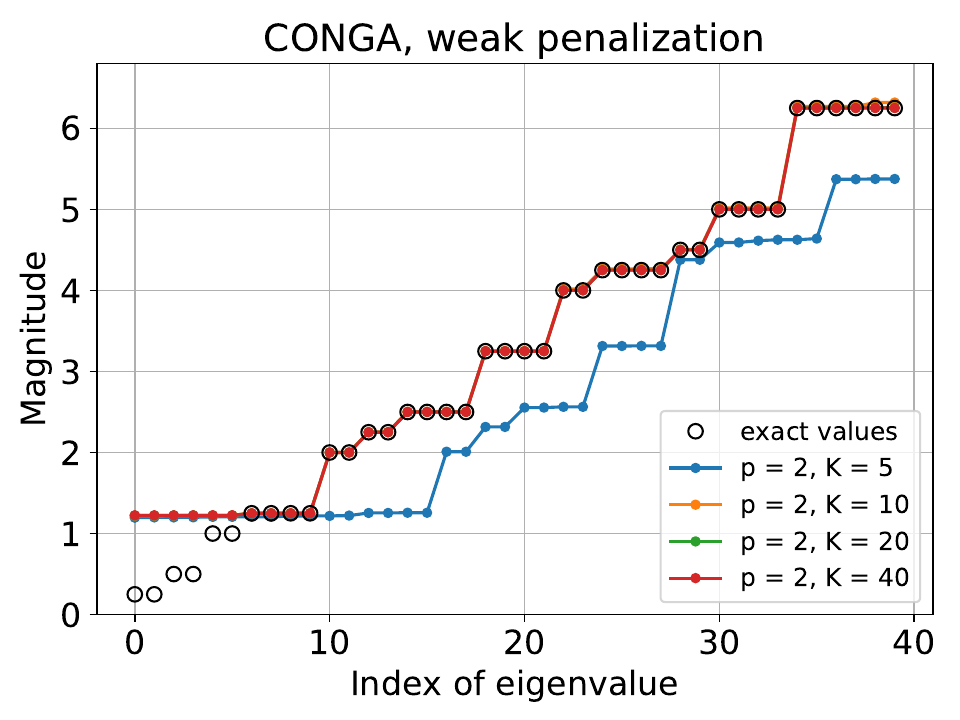}
  \caption{
  Discrete eigenvalues of the CONGA Hodge Laplacian operator on the square,
  with strong (left) and weak (right) penalization regimes, as discussed in the text.
  }
  \label{fig:HL_eigs}
\end{figure}

\begin{figure} 
    \includegraphics[width=0.49\textwidth]{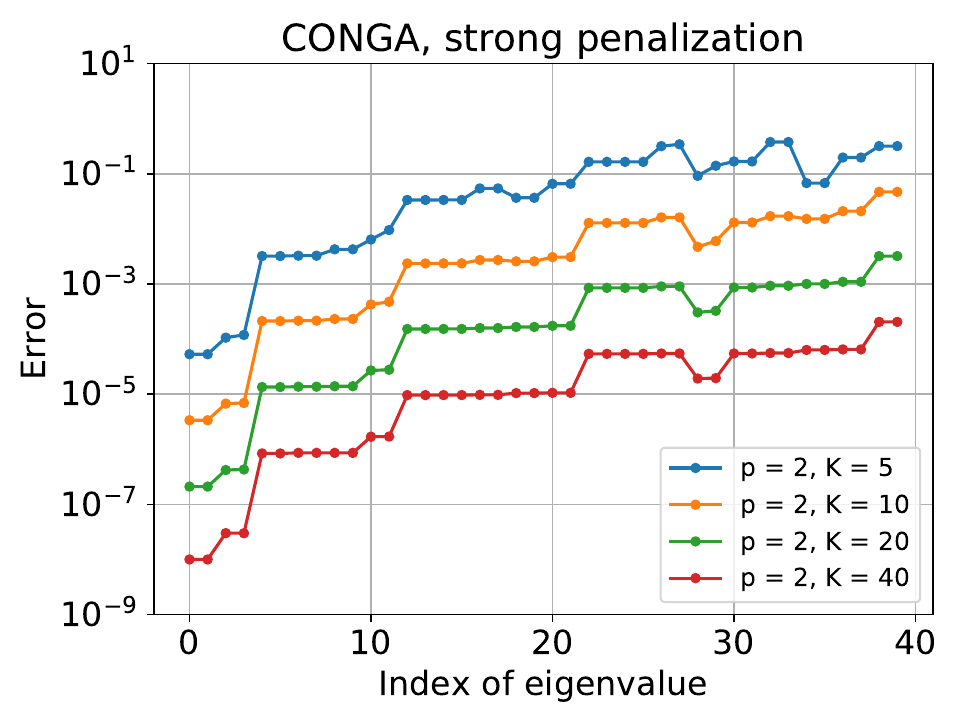}
    \hfill
    \includegraphics[width=0.49\textwidth]{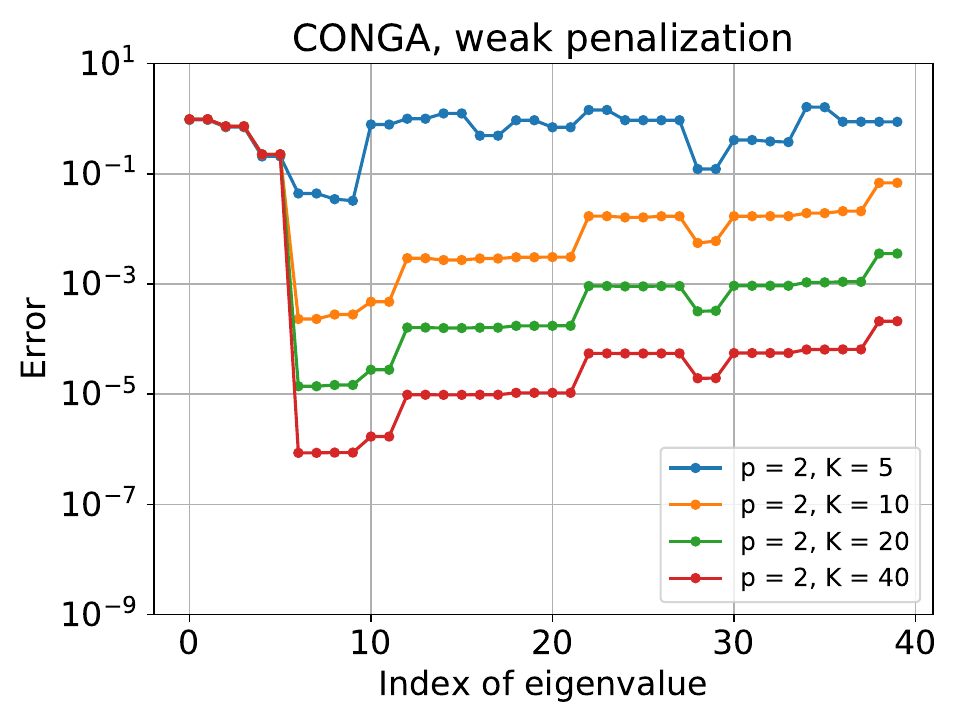}
  \caption{
  Eigenvalue errors for the CONGA Hodge Laplacian operator on the square,
  for the same parameters as in Figure~\ref{fig:HL_eigs}.
  }
  \label{fig:HL_eigserr}
\end{figure}

\begin{figure} 
    \includegraphics[width=0.49\textwidth]{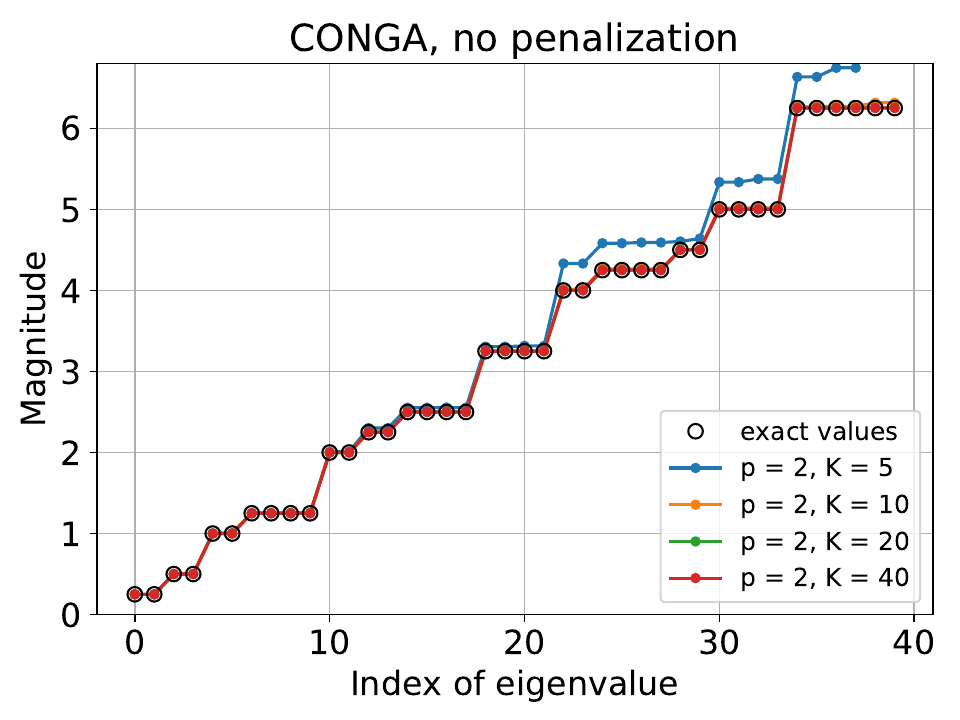}
    \hfill
    \includegraphics[width=0.49\textwidth]{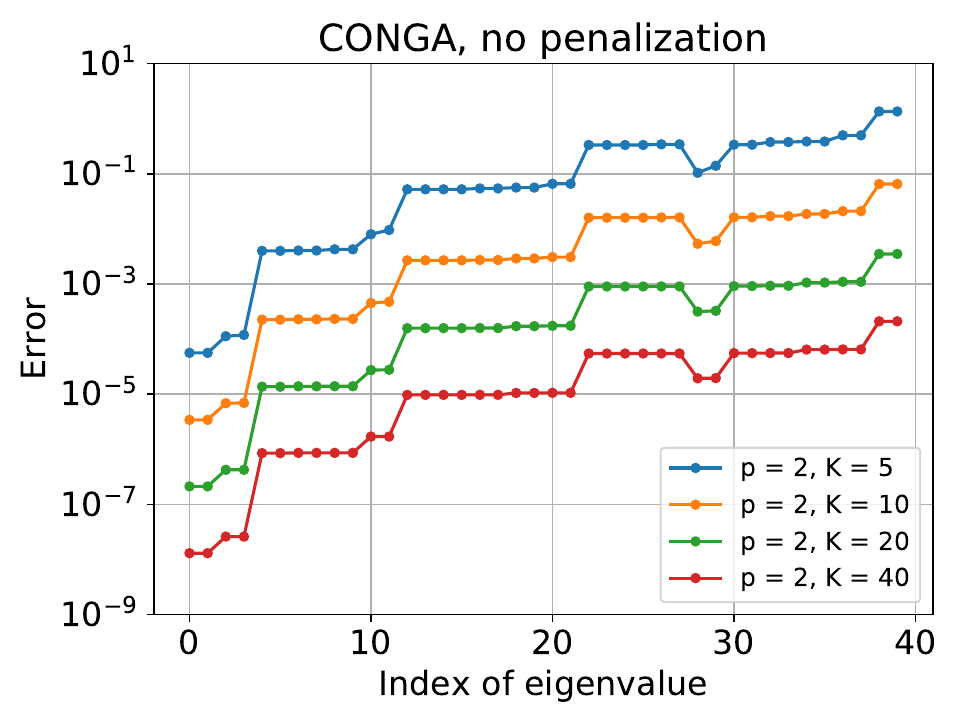}
  \caption{
  Positive eigenvalues and errors for the unpenalized ($\alpha_h = 0$)
  CONGA Hodge Laplacian operator on the square.
  }
  \label{fig:HL_eigsnope}
\end{figure}

\section{Conclusion}

In this article we have studied general discretizations of Hilbert complexes $(V, d)$
where the conformity constraint is relaxed while preserving most of the intrinsic 
stability and structure-preservation properties of {conforming} 
Finite Element Exterior Calculus (FEEC) discretizations. 
In our approach the {nonconforming} complexes $(V_h, d_h)$ 
rely on stable {conforming} discretizations,
i.e. discrete subcomplexes $(V^c_h, d)$ admitting a bounded cochain projection $\pi_h: V \to V^c_h$, 
and the discrete differential operators $d_h = dP_h$ are based on 
{\em conforming projections} $P_h: V_h \to V_h$ which are stable projection 
operators onto the underlying subcomplex $V^c_h = V_h \cap V$.

This {\em broken FEEC} approach has been originally introduced in the
conforming/nonconforming Galerkin (CONGA) approximations to 
time-dependent Maxwell equations \cite{Campos-Pinto.Sonnendrucker.2016.mcomp},
where the curl-conformity constraint was relaxed while preserving the de Rham structure
properties in the case of an exact sequence.
Here we have extended this approach to the discretization of full Hilbert complexes
with nontrivial harmonic spaces, and we have completed the construction with stable 
commuting projections $\tilde \pi_h$ for the dual (weak) discrete sequence.
We have also studied the properties of the associated CONGA Hodge Laplacian operator,
in particular we have shown that it has the same kernel as the underlying conforming FEEC 
Hodge Laplacian operator (namely, conforming discrete harmonic fields),
provided an arbitrary stabilization term is used to handle the space nonconformities.

In a second part, we have studied the CONGA Hodge Laplace source and eigenvalue problems.
Under stability and moment-preserving assumptions for the conforming projections
we have shown that the source problem is well--posed, and we have derived a priori error 
estimates that allowed us to demonstrate the spectral correctness 
of the penalized CONGA Hodge Laplacian operator. This guarantees in particular that the latter 
is free of spurious eigenvalues. 

In a third part we have applied our broken FEEC discretization method to polynomial 
finite elements where the cell-diagonal structure of the mass matrices naturally yields 
local discrete differential operators, both for the primal (strong) and dual (weak) sequences,
as well as local $L^2$ stable dual commuting projection operators.
Finally we have validated our theoretical study with numerical examples on a simple square domain,
and we have assessed their dependency with respect to the stabilization parameter.
We point out that these results have been extended to mapped spline elements 
on multi-patch non-contractible domains in a recent article \cite{conga_psydac} 
where CONGA schemes have been also proposed for several electromagnetic problems, 
including time-harmonic Maxwell and magnetostatic problems.
Our results also seem to indicate that the CONGA Hodge Laplacian is spectrally correct
in the absence of stabilization. This property however falls outside the scope of the
present analysis, and remains a subject for further investigation.

\section*{Acknowledgments}

The authors would like to thank Eric Sonnendrücker for inspiring discussions throughout this work.
They also thank the anonymous reviewer for several useful suggestions that helped
improve the presentation of our results, including a tentative explanation for the spectral correctness 
observed in the unpenalized case.
The work of Yaman Güçlü was partially supported by the European Council under the Horizon 2020
Project Energy oriented Centre of Excellence for computing applications - EoCoE, Project ID 676629.

\bibliographystyle{amsplain}
\bibliography{mc_ch}

\end{document}